\title{Partial bases and homological stability of $\GL_{n}(R)$ revisited}
\author{Calista Bernard}
\address{School of Mathematics, University of Minnesota, USA}
\email{berna249@umn.edu}
\author{Jeremy Miller}
\address{Department of Mathematics, Purdue University, USA}
\email{jeremykmiller@purdue.edu}
\author{Robin J. Sroka} 
\address{Mathematisches Institut, Universität Münster, Germany}
\email{robinjsroka@uni-muenster.de}
\thanks{CB and RJS were supported by the Swedish Research Council under grant no.\ 2016-06596 while in residence at Institut Mittag-Leffler in Djursholm, Sweden during the semester \emph{Higher algebraic structures in algebra, topology and geometry}. JM was supported by NSF grant DMS-2202943. RJS was supported by NSERC Discovery Grant A4000 in connection with a Postdoctoral Fellowship at McMaster University, by the Deutsche Forschungsgemeinschaft (DFG, German Research Foundation) -- Project-ID 427320536 -- SFB 1442, as well as by Germany’s Excellence Strategy EXC 2044 -- 390685587, Mathematics Münster: Dynamics–Geometry–Structure.}
\newcommand{\m}{\to}
\providecommand{\Span}{\ensuremath\mathsf{span}}
\providecommand{\Star}{\ensuremath\mathsf{Star}}
\providecommand{\Link}{\ensuremath\mathsf{Link}}
\providecommand{\N}{\ensuremath\mathbb N}
\providecommand{\Z}{\ensuremath\mathbb Z}
\providecommand{\Q}{\ensuremath\mathbb Q}
\providecommand{\F}{\ensuremath\mathbb F}
\providecommand{\bT}{\ensuremath\mathbb T}
\DeclareMathOperator{\im}{im}
\DeclareMathOperator{\Hom}{Hom}
\DeclareMathOperator{\End}{End}
\DeclareMathOperator{\GL}{GL}
\DeclareMathOperator{\SL}{SL}
\DeclareMathOperator{\rank}{rank}
\DeclareMathOperator{\ord}{ord}
\DeclareMathOperator{\Ind}{Ind}
\DeclareMathOperator{\St}{St}
\newcommand{\U}{\operatorname{U}}
\newcommand{\bU}{\operatorname{\mathbb{U}}}
\newcommand{\B}{\operatorname{B}}
\newcommand{\bB}{\operatorname{\mathbb{B}}}
\newcommand{\BA}{\operatorname{BA}}
\newcommand{\BX}{\operatorname{BX}}
\newcommand{\bLink}{\operatorname{\mathbb{Link}}}
\newcommand{\bS}{\operatorname{\mathbb{S}}}
\newcommand{\relstgen}{\mathcal{S}}
\newcommand{\fix}{\operatorname{fix}}
\newcommand{\K}{\mathbb{k}}
\newcommand{\pres}{\operatorname{pres}}
\providecommand{\incl}{\ensuremath\mathsf{incl}}
\newcommand{\on}[1]{\operatorname{#1}}
\begin{document}
	
	\begin{abstract}
		Let $R$ be a unital ring satisfying the invariant basis number property, that every stably free $R$-module is free, and that the complex of partial bases of every finite rank free module is Cohen--Macaulay. This class of rings includes every ring of stable rank $1$ (e.g.\ any local, semi-local or Artinian ring), every Euclidean domain, and every Dedekind domain $\mathcal{O}_S$ of arithmetic type where $|S| > 1$ and $S$ contains at least one non-complex place. Extending recent work of Galatius--Kupers--Randal-Williams and Kupers--Miller--Patzt, we prove that the sequence of general linear groups $\GL_n(R)$ satisfies slope-$1$ homological stability with $\Z[1/2]$-coefficients.
	\end{abstract}

	\maketitle
	
	\setcounter{tocdepth}{1}
	\tableofcontents
	
	\emergencystretch=2em
	\section{Introduction}
	\label{section-1}

Let $R$ be a unital ring. Charney \cite{charney1980}, Maazen \cite{maazen79} and van der Kallen \cite{vanderkallen1980} proved generic slope-$1/2$ homological stability results that apply to large classes of rings $R$, including all Euclidean and Dedekind domains. The most general version, due to van der Kallen \cite{vanderkallen1980}, shows that if the Bass stable rank of $R$ is finite, $\on{sr}(R) < \infty$, then the inclusion induced map
\[
H_i(\GL_{n-1}(R); \Z) \xrightarrow{\cong} H_i(\GL_{n}(R); \Z)
\] 
is an isomorphism in the slope-$1/2$ range given by $i \leq n/2+c$, where $c \in \Q$ is a constant depending on $\on{sr}(R)$. To establish this, and in the process resolving a conjecture of Quillen (cf.\ \cite[Section 1]{wagoner1976}), van der Kallen proved that certain complexes of unimodular vectors in $R^n$ are highly connected for every $n \geq 0$.

The goal of this work is to show that in many cases of interest, e.g.\ if $R$ is a local, semi-local or Artinian ring; a Euclidean domain; or a Dedekind domain $R = \mathcal{O}_S$ of arithmetic type where $|S| > 1$ and $S$ contains at least one non-complex place, the slope-$1/2$ ranges established in \cite{charney1980, maazen79, vanderkallen1980} can be improved to slope-$1$ ranges if one is willing to invert $2$ in the coefficient module.

\subsection{Statement of main result} 

Our argument requires that the ring $R$ satisfy three properties. To state these, we recall the definition of a well-studied simplicial complex \cite{quillen1974, wagoner1976, maazen79, vanderkallen1980, churchfarbputman2019}, which is closely related to the aforementioned complexes of unimodular vectors (see \cref{sec:assumptions}).

\begin{definition}
	\label{definition:partial-bases-poset}
	A \emph{partial basis} of $R^n$ is a subset of a basis. The $n$-th \emph{complex of partial bases} of $R$, denoted by $B_n(R)$, is the simplicial complex in which a $k$-simplex is a partial basis of $R^n$ of size $k+1$.
\end{definition}

Recall that a simplicial complex $X$ is called \emph{$d$-spherical} if $X$ is of finite dimension $\dim(X) = d < \infty$ and $(d-1)$-connected. Following Quillen \cite[Section 8]{quillen1978}, we say that $X$ is \emph{Cohen--Macaulay} if it is of finite dimension $\dim(X) = d < \infty$, $d$-spherical and the link of every $p$-simplex in $X$ is $(d - p - 1)$-spherical. Using this topological notion and well-known ring-theoretic properties, we can now state our assumptions.

\begin{assumption}
	\label{assumption:R}
	Let $R$ be a unital ring such that
	\begin{enumerate}
		\item \label{assumption-invariant-basis} $R$ has the invariant basis number property;
		\item \label{assumption-stably-free} $R$ is Hermite (i.e.\ every stably free $R$-module is free);
		\item \label{assumption-partial-bases} for all $n \geq 0$ the complex of partial bases $B_n(R)$ is Cohen--Macaulay.
	\end{enumerate}
\end{assumption}
	
Our main result then takes the following form.

\begin{theorem}
	\label{theoremA} 
	Let $R$ be a unital ring satisfying \cref{assumption:R} and let $\K$ be a commutative ring in which $2 \in \K^\times$ is a unit. Then
	\[
	H_i(\GL_{n-1}(R);\K) \m H_i(\GL_{n}(R);\K)
	\]
	is an isomorphism for $i \leq n-2$ and a surjection for $i\leq n-1$.  
\end{theorem}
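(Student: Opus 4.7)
The plan is to follow the $E_\infty$-algebra approach to homological stability developed by Galatius--Kupers--Randal-Williams and adapted to general linear groups by Kupers--Miller--Patzt. The first step is to assemble the classifying spaces $B\GL_n(R)$ into the components of an $E_\infty$-algebra $\bR = \coprod_{n \geq 0} B\GL_n(R)$ in an appropriate $\N$-graded category of spaces, with multiplication induced by block-sum of free modules. \cref{assumption-invariant-basis} guarantees that the grading is well-defined, while \cref{assumption-stably-free} ensures that the direct sum pairing is compatible with the stabilization maps and produces the correct underlying $E_1$-algebra.

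Next, I would translate the desired homological stability statement into a vanishing statement for the derived $E_\infty$-indecomposables $H^{E_\infty}_{n,d}(\bR;\K)$. Using the standard CW-approximation machinery for cellular $E_\infty$-algebras, slope-$1$ stability with $\K$-coefficients reduces (up to the usual shift for surjectivity versus isomorphism) to showing that $H^{E_\infty}_{n,d}(\bR;\K) = 0$ in the range $d < n$. The verification of this range is the main analytic content.

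To bound the indecomposables, I would use a splitting-complex model: the $n$-th component is governed by the $\K$-homology of a simplicial complex $S_n(R)$ built from compositions of $n$ into partial-basis data, together with its natural $\mathfrak{S}_n$-action. I expect $S_n(R)$ to be identified, up to suitable suspension, with an iterated join/link construction on the partial basis complex $B_n(R)$. \cref{assumption-partial-bases} then supplies the necessary connectivity bound of $n-2$; crucially, the Cohen--Macaulay property gives the same bound uniformly on links of simplices, which is exactly what is needed to control the entire composition stratification at once rather than merely the top stratum.

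The main obstacle, and the step where the hypothesis $2 \in \K^\times$ enters essentially, is improving a naïve slope-$1/2$ bound (which is all the connectivity alone provides) to the slope-$1$ bound claimed. After the above reduction, the potentially obstructing class on the boundary of the stability range lands in the sign-isotypic component of a $\mathfrak{S}_2$-action on $S_n(R)$ arising from swapping two summands in the destabilization picture. Inverting $2$ forces this component to vanish: the $\mathfrak{S}_2$-averaging/transfer map becomes a projection onto the trivial isotypic summand, collapsing the would-be obstruction and upgrading the bound from slope $1/2$ to slope $1$. This is the same mechanism that necessitates the $\Z[1/2]$ hypothesis in Kupers--Miller--Patzt, and verifying the sign-representation cancellation in the present generality is where I expect the real work to lie.
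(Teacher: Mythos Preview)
Your high-level framework is right: one does assemble $\coprod_n B\GL_n(R)$ into an $E_\infty$-algebra and reduce slope-$1$ stability to a vanishing statement for its $E_1$-homology via \cite[Proposition~5.1]{kupersmillerpatzt2022}. But your account of the decisive step is off in two related ways. First, the relevant splitting complex is Charney's poset $\bS_n^{E_1}(R)$ of free direct-sum decompositions, not an iterated join/link of $B_n(R)$; its $(n-2)$-sphericity (\cref{freeCharney}) already gives $H^{E_1}_{n,d}=0$ for $d\le n-2$, so the only missing case is $d=n-1$, which is exactly the coinvariants $(\on{Ch}_n(R)\otimes\K)_{\GL_n(R)}$ of the Charney module. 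Second, the vanishing of these coinvariants is not obtained by an $\mathfrak{S}_2$-transfer killing a sign isotypic piece. Instead one filters $\on{Ch}_n(R)$ so that the associated graded pieces are induced from Steinberg modules $\St_n(R)$ and \emph{relative} Steinberg modules $\St^m_n(R)$ (\cref{SSTvanish}), and then shows the coinvariants of each of those vanish by exhibiting explicit apartment-class generators together with an element $\phi\in\GL_n^m(R)$ sending each generator to its negative, so that $2\cdot[\Theta]=0$ in coinvariants.

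The genuine gap is that you have no mechanism for the relative Steinberg step. For $\St_n(R)$ the generators are classical (Ash--Rudolph, Church--Farb--Putman), but for $\St^m_n(R)$ the only previously known generating sets required the Church--Putman complexes $\BA_n(R)$ to be Cohen--Macaulay, which fails for most rings satisfying \cref{assumption:R}. The paper's main technical contribution (\cref{theorem:generators}) is a new inductive construction of generators for $\St^m_n(R)$ using only \cref{assumption:R}, via an auxiliary complex $\BX^m_n(R)$ of externally augmented partial bases and a bootstrap on the parameter $j$ in $\relstgen^m_n(j)$. Without this, your proposal stalls at slope $2/3$ at best for generic rings in this class; the ``sign-representation cancellation'' you describe does not by itself supply the missing generators, and that is where the real work lies.
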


Previously, such slope-$1$ homological stability results were known to hold for general linear groups over fields and connected semi-local rings with infinite residue fields with $\K = \Z$ \cite{quillen1974, suslin1984, nesterenkosuslin1989, guin1989, galatiuskupersrandalwilliams2018cellsandfinite, galatiuskupersrandalwilliams2020cellsandinfinite, sprehnwahl2020}; the integers, the Gaussian integers, and the Eisenstein integers with $\K = \Z[1/2]$ \cite{kupersmillerpatzt2022}; and any ring of integers of a number field with $\K = \Q$ \cite{lisun2019}.

Our theorem recovers almost all of these results with $\K = \Z[1/2]$; the only exception is the case of totally imaginary, non-Euclidean number rings \cite{lisun2019}. Moreover, it extends the class of rings whose general linear groups satisfy slope-$1$ homological stability with $\Z[1/2]$-coefficients to include:

\begin{enumerate}[leftmargin=*]
	\item \emph{Every ring of stable rank 1} (e.g.\ all fields; all local, semi-local or Artinian rings \cites[(6.5)]{bass1964}[Example 1.1]{vaserstein1984}; the ring of all algebraic integers \cite[Example 1.2]{vaserstein1984}):\\
	\cref{assumption-invariant-basis} is an elementary consequence of having finite stable rank, see e.g.\ \cite[Remark 5.8]{randalwilliamswahl2017}. \cref{assumption-stably-free} follows from the fact that the stable rank is equal to one, see \cite[Corollary 11.1.5]{fajardoetal2020}. \cref{assumption-partial-bases} holds because \cref{assumption-invariant-basis} and \cref{assumption-stably-free} imply (compare \cref{sec:assumptions}) that $B_n(R)$ agrees with the complex of unimodular sequences investigated by van der Kallen in \cite{vanderkallen1980}, who proved that these complexes are Cohen--Macaulay \cite[§2. An Acyclicity Theorem]{vanderkallen1980}.
	\item \emph{Every Euclidean domain} (e.g.\ Dedekind domains with finitely many prime ideals, Euclidean number rings,  the ring of polynomials over any field):\\
	\cref{assumption-invariant-basis} holds for every commutative ring, see e.g.\ \cite[Theorem 2.6]{cohn1966}. \cref{assumption-stably-free} follows from the fact that Euclidean domains are principal ideal domains. \cref{assumption-partial-bases} is a result of Maazen \cite{maazen79}, which inspired van der Kallen's work \cite{vanderkallen1980}.
	\item \emph{Every Dedekind domain $R = \mathcal{O}_S$ of arithmetic type, where $|S| > 1$ and $S$ contains at least one non-complex place} (e.g.\ number rings $R \neq \Z$ with a real embedding or rings of $S$-integers of global fields with infinitely many units in which some prime is invertible \cite[p.1380]{churchfarbputman2019}):\\
	\cref{assumption-invariant-basis} holds for every commutative ring, see e.g.\ \cite[Theorem 2.6]{cohn1966}. \cref{assumption-stably-free} follows from the Steinitz-Chevalley structure theory for Dedekind domains, see e.g.\ \cite[Examples 4.7 (4)]{lam2006}. \cref{assumption-partial-bases} is a result of Church--Farb--Putman \cite[Theorem E]{churchfarbputman2019}. 
\end{enumerate}

In the last item and following \cite[Definition 1.3]{churchfarbputman2019}, by a Dedekind domain of arithmetic type $R$ we mean the ring of $S$-integers $\mathcal{O}_S \coloneqq \{x \in K : \ord_{\mathfrak{p}}(x) \geq 0 \text{ for all } \mathfrak{p} \in S\}$ of a global field $K$ (i.e.\ a number field $K/\Q$ or a function field in one variable over a finite field $K/\mathbb{F}_q(T)$), where $S$ is a finite nonempty set of places of $K$ (containing all infinite places if $K$ is a number field). This includes all number rings and certain localizations thereof.
As explained in \cite{churchfarbputman2019}, the assumption that $|S|> 1$ is equivalent to $R$ having infinitely many units. In particular, the third class of rings above does not include Dedekind domains such as the Euclidean number rings $\Z$ and $\Z[i]$ (which are covered by the second class). More generally, the third class does not include rings of integers of totally imaginary number fields and those $\mathcal{O}_S$ with finitely many units in the function field case (e.g.\ $\mathbb{F}_p[T]$).

Our theorem improves the generic slope-$2/3$ stability result with $\Z[1/2]$-coefficients for general linear groups over Euclidean domains obtained by Kupers--Miller--Patzt \cite[Theorem B]{kupersmillerpatzt2022}. For Dedekind domains of class number $1$ and if one is willing to invert $2$ in the coefficients, it often improves a generic slope-$2/3$ stability result due to Galatius--Kupers--Randal-Williams \cite[Section 18.2]{galatiuskupersrandalwilliams2021cells}.

At this level of generality, the slope-$1$ range in \cref{theoremA} cannot be improved: Results of Suslin \cite{suslin1984}, Nesterenko--Suslin \cite{nesterenkosuslin1989} and Guin \cite{guin1989} show e.g.\ that for infinite fields $F$ it holds that $H_n(\GL_n(F),\GL_{n-1}(F); \Z)$ is isomorphic to the $n$-th Milnor $K$-theory group $K^M_n(F)$. Since these are often nonzero for all $n$, slope-$1$ stability is often sharp.

We furthermore remark that it is necessary to use coefficients where $2$ is invertible in \autoref{theoremA}, because e.g.\ $H_1(\GL_2(\F_2),\GL_1(\F_2); \Z) \neq 0$.

\subsection{Outline and main technical achievement}

The proof of \cref{theoremA} builds on ideas due to Charney \cite{charney1980}, Galatius--Kupers--Randal-Williams \cite{galatiuskupersrandalwilliams2018cellsandfinite} and Kupers--Miller--Patzt \cite{kupersmillerpatzt2022}. Indeed, our general strategy of proof builds on that used by Kupers--Miller--Patzt in \cite{kupersmillerpatzt2022} to prove slope-1 stability with $\Z[1/2]$-coefficients for the general linear groups over the integers, the Gaussian integers and the Eisenstein integers. \cite{kupersmillerpatzt2022} use ideas developed in \cite{galatiuskupersrandalwilliams2018cellsandfinite, charney1980} to reduce the question of whether slope-1 stability with $\Z[1/2]$-coefficients holds to the question of whether the co-invariants of certain modules associated to $\{\GL_{n}(R)\}_{n \in \N}$ are trivial (after tensoring with $\Z[1/2]$). The relevant modules are the Steinberg module, denoted by $\St_n(R)$, and certain \emph{relative} Steinberg modules, denoted by $\St_n^m(R)$.

Steinberg modules $\St_n(R)$ play an important role in representation theory (e.g.\ \cite{steinberg1951}), the study of the cohomology of arithmetic groups (e.g.\ \cite{borelserre1973}) and the algebraic $K$-theory of $R$ (e.g.\ \cite{quillen1973}). If \cref{assumption:R} holds, an argument due to Church--Farb--Putman \cite[Proof of Theorem A]{churchfarbputman2019} shows that the complex of partial bases $B_{n}(R)$ can be used to construct a generating set of the Steinberg module $\St_n(R)$ and that this generating set can be used to check that its co-invariants vanish, 
\[
	(\St_n(R) \otimes \Z[1/2])_{\GL_{n}(R)} = 0, \text{ if } n \geq 2.
\]
This was carried out by Scalamandre \cite{scalamandre2023} in the generality that we require in this work.

Our main technical result uses a new induction procedure to show that \cref{assumption:R} also suffices to construct generating sets for the \emph{relative} Steinberg modules $\St_n^m(R)$, which can be used to check that their co-invariants also have the desired vanishing property,
\[
(\St_n^m(R) \otimes \Z[1/2])_{\GL_n^m(R)} = 0, \text{ if } m \geq 1 \text{ and } n \geq 1.
\]
Previously, it was only known how to construct such a generating set for \emph{relative} Steinberg modules under significantly stronger connectivity assumptions: Kupers--Miller--Patzt's approach in \cite{kupersmillerpatzt2022} requires that certain complexes of augmented partial frames $\BA_n(R)$, introduced by Church--Putman \cite{churchputman2017}, are Cohen--Macaulay. Here $\BA_n(R)$ is a simplicial complex of dimension $n$, while the complexes of partial bases $B_{n}(R)$ appearing in \cref{assumption:R} are only $(n-1)$-dimensional. To satisfy the Cohen--Macaulay property, the complex of augmented frames $\BA_n(R)$ ought therefore to be $(n-1)$-connected, while \cref{assumption-partial-bases} in \cref{assumption:R} only requires $B_n(R)$ to be $(n-2)$-connected. To date, the complexes of augmented partial frames are only known to have the desired connectivity properties if $R$ is a field or one of three rings: the integers \cite{churchputman2017}, the Gaussian and Eisenstein integers \cite{kupersmillerpatztwilson2022}. In fact, it is known that for many Euclidean rings the complexes of augmented partial frames $\BA_n(R)$ do \emph{not} satisfy the connectivity assumption needed to run the argument of Kupers--Miller--Patzt, see \cite[Proof of Theorem C]{kupersmillerpatztwilson2022}. 

The main innovation of this work is hence to bypass the need for high connectivity of the complexes of augmented partial frames $\BA_n(R)$ in \cite{kupersmillerpatzt2022}: We use the \emph{same} simplicial complexes $B_{n}(R)$ as Maazen \cite{maazen79} and van der Kallen \cite{vanderkallen1980}, yet are still able to double the slope of the stable range.

Our homological stability argument is powered by the connection to $E_k$-cells introduced in \cite[Theorem 4.2]{kupersmiller2018}, and the relation between $E_k$-cells and splitting complexes developed by Galatius--Kupers--Randal-Williams in \cite{galatiuskupersrandalwilliams2021cells} and \cite{galatiuskupersrandalwilliams2018cellsandfinite}. As in \cite{galatiuskupersrandalwilliams2018cellsandfinite}, the two vanishing results for the co-invariants of Steinberg and relative Steinberg modules lead to a vanishing result for the co-invariants of the \emph{Charney modules} $\on{Ch}_n(R)$ after tensoring with $\Z[1/2]$ (see \cref{def:charney-module} and \cref{proposition:computational-input-split-steinberg}),
\[
	(\on{Ch}_n(R) \otimes \Z[1/2])_{\GL_n(R)} = 0 \text{ if } n \geq 2.
\]
The $\GL_n(R)$-modules $\on{Ch}_n(R)$ arise as the top-degree homology groups of spherical splitting complexes, which were first studied in Charney's work on homological stability of general linear groups \cite{charney1980}. More recently, Galatius--Kupers--Randal-Williams proved that the homology of the groups $\GL_n(R)$ with coefficients in the Charney modules measures the $E_1$-André--Quillen homology of the $E_\infty$-algebra $\on{BGL}(R) = \bigsqcup_{n \in \N} \on{BGL}_n(R)$ (compare \cref{remark:charney-module-is-e1-steinberg}). Proving a vanishing result for $H_0(\GL_n(R); \on{Ch}_n(R) \otimes \Z[1/2])$, i.e.\ the co-invariants above, is the required input for deducing the slope-1 stability result using the theory of $E_k$-cells (see \cite[Proposition 5.1]{kupersmillerpatzt2022}).

\subsection{Outline} In \cref{sec:assumptions}, we elaborate on the ring-theoretic meaning of \cref{assumption:R} and give an equivalent set of assumptions. We show that $R$ satisfies \cref{assumption:R} if and only if $R^{op}$ does. This left-right duality is a key technical ingredient in our stability argument. In \cref{sec:relative-steinberg-modules}, we introduce the Steinberg and relative Steinberg modules. In \cref{sec:generating-sets-for-relative-steinberg-modules}, we prove the desired vanishing results for the co-invariants of the relative Steinberg modules. This is the main technical innovation of this work. In \cref{sec:slope-1-homological-stability}, we deduce our main theorem. We show the vanishing result for the co-invariants of the Charney modules and deduce slope-1 homological stability using the theory of $E_k$-cells. In \cref{sec:appendix}, we collect all technical arguments that need to be carried out but that are similar to arguments contained in the literature. Its function is to keep the article focused on the novel contributions.

\subsection{Acknowledgements}

It is a pleasure to thank Jian-Shu Li, Binyong Sun and Oscar Randal-Williams for helpful correspondence about the relation of this article and \cite{lisun2019}, and Alexander Kupers for helpful conversations.

\subsection{Notation and conventions}

$R$ denotes a unital (not necessarily commutative) ring, and all $R$-modules are taken to be left $R$-modules. We denote by $\langle S \rangle$ the $R$-span of a subset $S$ of an $R$-module. If $M$ is a left $R$-module, we denote by $\End_R(M)$ its ring of left $R$-module endomorphisms, where the addition is defined point-wise and we use the convention that the product $(f \cdot g)$ is given by composition in the opposite order $(g \circ f)$. With this convention $M$ is an $(R, \End_R(M))$-bimodule, and if $M$ is a free $R$-module of rank $n$, there are ring isomorphisms $\End_R(_RR) \cong R$ and, picking an $R$-module basis, $\End_R(M) \cong \on{Mat}_{n \times n}(R)$. We will henceforth write morphisms and matrices acting on left modules on the right (and vice versa). We use blackboard bold to denote posets $\mathbb{P}$. If $x \in \mathbb{P}$, we write $\mathbb{P}_{< x} \coloneqq \{z \in \mathbb{P}: z < x\}$ and similarly define $\mathbb{P}_{\leq x}, \mathbb{P}_{> x}$ and $\mathbb{P}_{\geq x}$. If $x < y$ in $\mathbb{P}$, we write $\mathbb{P}_{(x,y)} \coloneqq \{z \in \mathbb{P}: x < z < y\}$.
	\section{Assumptions on $R$}
	\label{sec:assumptions}
In this section, we describe a set of assumptions on $R$ that is equivalent to \cref{assumption:R}, but expressed in terms of the spaces of unimodular sequences appearing in \cite{quillen1978,wagoner1976,maazen79,vanderkallen1980}. We use this to discuss our assumptions on $R$ in greater detail, and to elaborate their ring-theoretic meaning. Combined with an argument carried out in \cref{appendix:partial-bases-of-the-opposite-ring}, we prove that $R$ satisfies \cref{assumption:R} if and only if its opposite ring $R^{op}$ does (see \cref{theorem:relation-to-Rop}). This left-right duality is an important ingredient in the proof of the key \cref{proposition:computational-input-split-steinberg}; it allows to apply a dualizing trick due to Charney \cite{charney1980} on which the strategy of \cite{kupersmillerpatzt2022} relies. This is the only section in this work in which we do not assume that \cref{assumption:R} holds.

\begin{definition}
	\label{definition:complex-of-unimodular-sequences}
	Let $M$ be a free $R$-module. A vector $\vec v \in M$ is called \emph{unimodular} if it is the basis of a free direct summand of rank-1 in $M$. The \emph{complex of unimodular vectors} $\U(M)$ is the simplicial complex whose vertices are unimodular vectors in $M$, and where a collection of vectors forms
	a simplex if and only if they are a basis for a free direct summand of $M$. Let $\U_n(R)$ denote $\U(R^n)$. We write $\bU_n(R)$ for the simplex poset of $\U(R)$, and call it the poset of unimodular vectors.
\end{definition}

Note that the poset of unimodular vectors $\bU_n(R)$ in \cref{definition:complex-of-unimodular-sequences} a priori differs from the poset of partial bases $\bB_n(R)$, i.e.\ the simplex poset of the complex $B_n(R)$ in \cref{definition:partial-bases-poset}, since e.g.\ in general a set of $n$ unimodular (i.e.\ linearly independent) vectors in $R^n$ need not be a basis, and complements of free summands need not be free. 

However, we will show that \cref{assumption:R} is equivalent to the following set of assumptions, and that under these assumptions it holds that $\bB_n(R) = \bU_n(R)$ for all $n \geq 0$.

\begin{assumption}\label{assumption:R-2}
	$R$ is a unital ring such that
	\begin{enumerate}
		\item \label{assumption-weakly-finite} $R$ is weakly finite (i.e.\ if $n \geq 0$ and $R^n\cong R^n \oplus C$, then $C=0$);
		\item \label{assumption-unimodular-sequences} for all $n \geq 0$ the complexes of unimodular vectors $\U_n(R)$ is Cohen--Macaulay.
	\end{enumerate}
\end{assumption}

In the next two subsections, we establish and discuss the following result.

\begin{proposition}
	\label{proposition:assumptions-are-equivalent}
	Let $R$ be a unital ring. Then \cref{assumption:R} holds if and only if \cref{assumption:R-2} holds, and in either case $\bB_n(R) = \bU_n(R)$ for all $n \geq 0$.
\end{proposition}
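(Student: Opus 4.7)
The strategy is to establish $\bB_n(R) = \bU_n(R)$ in each regime, since once this identification holds, Assumption~\ref{assumption:R}(3) and Assumption~\ref{assumption:R-2}(2) become literally the same statement, and the proposition reduces to the ring-theoretic equivalence of \ref{assumption:R}(1)+(2) with \ref{assumption:R-2}(1). Both inclusions of the set equality $\bB_n(R) = \bU_n(R)$ are elementary once Hermite is available: every partial basis extends by definition to a basis of $R^n$, so it spans a complemented free summand, giving $\bB_n(R) \subseteq \bU_n(R)$; conversely, a basis of a free direct summand $F \subseteq R^n$ with complement $C$ extends to a basis of $R^n$ by adjoining any basis of $C$, and such a basis exists because $C$ is stably free (from $R^k \oplus C \cong R^n$) and hence free by Hermite, giving $\bU_n(R) \subseteq \bB_n(R)$.

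For the ring-theoretic equivalence, the implication \ref{assumption:R}(1)+(2) $\Rightarrow$ \ref{assumption:R-2}(1) is immediate: if $R^n \cong R^n \oplus C$, then $C$ is stably free, hence $C \cong R^m$ by Hermite, so $R^n \cong R^{n+m}$ and the invariant basis number property forces $m = 0$. The reverse implication is where the Cohen--Macaulay hypothesis genuinely enters, since weakly finite alone does not imply Hermite. Here weakly finite trivially yields invariant basis number by splitting off, and to extract Hermite I would proceed as follows: given $R^k \oplus C \cong R^n$, a basis of the $R^k$-summand defines a $(k-1)$-simplex $\sigma$ of $\U_n(R)$; by the Cohen--Macaulay hypothesis the link of $\sigma$ is $(n-k-1)$-spherical and in particular contains a top-dimensional simplex, so $\sigma$ extends to an $(n-1)$-simplex $\tau$; one then verifies that $\tau$ is a basis of all of $R^n$, after which $C$ is isomorphic to the $R$-span of the $n-k$ vectors of $\tau$ not in $\sigma$, which is free on those vectors.

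The main technical point lies in this extraction argument: one must know that $\U_n(R)$ has dimension exactly $n-1$ (so that top-dimensional simplices actually have $n$ vertices) and that every $(n-1)$-simplex of $\U_n(R)$ is a basis of the whole of $R^n$. Both facts follow from mild uses of weakly finite --- ruling out free direct summands of rank greater than $n$ via $R^m \cong R^n \oplus R^{m-n}$, and showing that the complement of a rank-$n$ free summand in $R^n$ must vanish --- but they are what makes the Cohen--Macaulay condition on $\U_n(R)$ well-posed and allows the link-extension step to produce a genuine basis of $R^n$. With this bookkeeping in place and the identification $\bB_n(R) = \bU_n(R)$ established in either regime, the proposition follows.
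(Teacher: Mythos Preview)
Your proof is correct and follows essentially the same route as the paper: establish that under either assumption $R$ is both weakly finite and Hermite, deduce $\bB_n(R) = \bU_n(R)$ from Hermite, and then the two Cohen--Macaulay conditions coincide. The paper packages the Hermite-extraction step as a separate lemma (\cref{lemma:hermite-partial-basis-and-unimodular-sequences}: under weak finiteness, $R$ is Hermite iff every $k$-simplex in $\U_n(R)$ with $k\le n-2$ has nonempty link), iterating one vertex at a time, whereas you use the full Cohen--Macaulay hypothesis to extend to a top-dimensional link simplex in a single step---these are minor variations of the same argument.
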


\subsection{Invariant basis number property and other rank conditions} 

The following three properties are common assumptions on a unital ring $R$ in algebra and topology \cite{cohn1966}:

\begin{enumerate}[label=(\Roman*), ref=\Roman*]
	\item \label[property]{ibn-I} For all $m,n \in \N$ it holds that $R^m \cong R^n$ implies $m = n$.
	\item \label[property]{ibn-II} For all $m,n \in \N$ it holds that $R^m \cong R^n \oplus C$ implies $m \geq n$.
	\item \label[property]{ibn-III} For all $n \in \N$ it holds that $R^n \cong R^n \oplus C$ implies $C = 0$.
\end{enumerate}

\cref{ibn-I} is called the \emph{invariant basis number property} (i.e.\ this is \cref{assumption-invariant-basis} of \cref{assumption:R}), and \cref{ibn-III} is called \emph{weakly or stable finiteness} in the literature \cite[Chapter 0.1]{cohn2006} (i.e.\ this is \cref{assumption-weakly-finite} of \cref{assumption:R-2}). We remark that \cref{ibn-III} is equivalent to saying that a set of $n$ linearly independent vectors in $R^n$ (i.e.\ an $(n-1)$-simplex in the complex of unimodular vectors $\U_n(R)$) is a basis of $R^n$. It is an exercise to see that \cref{ibn-III} implies \cref{ibn-II}, and that \cref{ibn-II} implies \cref{ibn-I}. The reverse implications are false in general \cite{cohn1966}. Nevertheless, \cref{ibn-I}, \cref{ibn-II} and \cref{ibn-III} are equivalent if every stably free $R$-module is free \cite[Theorem 2.7]{cohn1966}. Our discussion yields the following.

\begin{corollary}
	\label{corollary:assumption-consequence-1}
	If $R$ satisfies \cref{assumption:R} or \cref{assumption:R-2}, then $R$ satisfies \cref{ibn-I}, \cref{ibn-II} and \cref{ibn-III}. In particular, $\B_n(R)$ and $\U_n(R)$ are $(n-1)$-dimensional.
\end{corollary}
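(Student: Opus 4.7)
The plan is to treat the two sets of hypotheses separately and then read off the dimension claim from whichever of \cref{ibn-I}, \cref{ibn-II}, \cref{ibn-III} one has in hand. The main observation is that the implications \cref{ibn-III}$\Rightarrow$\cref{ibn-II}$\Rightarrow$\cref{ibn-I} are noted as elementary exercises in the preceding discussion, and that under the additional Hermite hypothesis all three are equivalent by \cite[Theorem 2.7]{cohn1966}. Nothing beyond these two inputs is needed.

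First, suppose $R$ satisfies \cref{assumption:R}. Then \cref{assumption-invariant-basis} gives \cref{ibn-I}, and \cref{assumption-stably-free} says that every stably free module is free, so \cite[Theorem 2.7]{cohn1966} kicks in and upgrades \cref{ibn-I} to all three of \cref{ibn-I}, \cref{ibn-II}, \cref{ibn-III}. Next, suppose $R$ satisfies \cref{assumption:R-2}. Then \cref{assumption-weakly-finite} is exactly \cref{ibn-III}, and the two elementary implications from the preceding paragraph of the paper give \cref{ibn-II} and \cref{ibn-I} for free.

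For the dimension statement, I would argue uniformly using the conclusions of the first part. A simplex of $\B_n(R)$ is a subset of some basis of $R^n$; by \cref{ibn-I} every basis of $R^n$ has exactly $n$ elements, so simplices have at most $n$ vertices and $\dim \B_n(R) \leq n-1$, with equality witnessed by the standard basis. A simplex of $\U_n(R)$ is a basis of a free direct summand $R^k \hookrightarrow R^n$ for some $k$; by \cref{ibn-II} applied to the splitting $R^n \cong R^k \oplus C$ one gets $k \leq n$, so again $\dim \U_n(R) \leq n-1$, with equality achieved by the standard basis.

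There is no real obstacle here: the only thing to be mildly careful about is that under \cref{assumption:R-2} one cannot quote \cite[Theorem 2.7]{cohn1966} directly, since the Hermite hypothesis is not assumed, and must instead use the direct implications \cref{ibn-III}$\Rightarrow$\cref{ibn-II}$\Rightarrow$\cref{ibn-I}. Once this is kept straight, the argument is a short verification.
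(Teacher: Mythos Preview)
Your proof is correct and follows essentially the same route as the paper: the corollary is stated immediately after the paragraph recording the elementary implications \cref{ibn-III}$\Rightarrow$\cref{ibn-II}$\Rightarrow$\cref{ibn-I} and the equivalence under the Hermite hypothesis via \cite[Theorem 2.7]{cohn1966}, and you have simply spelled out how these two facts combine under each of \cref{assumption:R} and \cref{assumption:R-2}. Your explicit dimension argument for $\B_n(R)$ and $\U_n(R)$ is a welcome addition, as the paper leaves this implicit.
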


\subsection{Hermite rings and completing unimodular sequences}

A ring is called \emph{Hermite} if every stably free module is free \cite{lam2006}. While the rank conditions discussed in the previous subsection are satisfied by most rings that one commonly encounters, the assumption that our ring $R$ is Hermite, i.e.\ \cref{assumption-stably-free} in \cref{assumption:R}, is more restrictive (see e.g.\ \cite[Chapter 0.4]{cohn2006}). It has the following equivalent characterization: If $R$ satisfies \cref{ibn-I}, \cref{ibn-II} or \cref{ibn-III}, then $R$ is Hermite if and only if $R^m \oplus C = R^n$ implies that $m \leq n$ and $C \cong R^{n-m}$ \cite[Corollary 0.4.2]{cohn2006}. In terms of unimodular vectors, this can be phrased as follows.

\begin{lemma}
	\label{lemma:hermite-partial-basis-and-unimodular-sequences}
	Let $R$ be a unital ring satisfying \cref{ibn-III}. Then $R$ is Hermite if and only if the link of every $k$-simplex in $\U_n(R)$ is nonempty for $k \leq n-2$. In this case, it holds that $\B_n(R) = \U_n(R)$ and that this complex is pure of dimension $(n-1)$ (i.e.\ every simplex is contained in a maximal $(n-1)$-simplex).
\end{lemma}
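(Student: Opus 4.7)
The plan is to use the characterization of Hermite rings recalled just above the statement: given \cref{ibn-III}, being Hermite is equivalent to the assertion that every splitting $R^n = R^m \oplus C$ forces $C \cong R^{n-m}$. Both directions then amount to translating between unimodular simplices in $\U_n(R)$ and splittings of $R^n$.

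For the ``only if'' direction, I would take a $k$-simplex $\{v_0, \ldots, v_k\}$ in $\U_n(R)$ with $k \leq n - 2$. By definition it provides a splitting $R^n = \langle v_0, \ldots, v_k \rangle \oplus C$ with $\langle v_0, \ldots, v_k \rangle \cong R^{k+1}$. Applying the Hermite property gives $C \cong R^{n-k-1}$, and since $n - k - 1 \geq 1$ any basis vector $w$ of $C$ is unimodular in $R^n$ and yields a $(k+1)$-simplex $\{v_0, \ldots, v_k, w\}$, so the link is nonempty.

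For the ``if'' direction, I would fix a splitting $R^n = R^m \oplus C$ (with $m \leq n$ by \cref{ibn-II}, which is implied by \cref{ibn-III}) and induct on $n - m$. The base case $n - m = 0$ is immediate from \cref{ibn-III}. For the inductive step, the standard basis of $R^m$ is an $(m-1)$-simplex in $\U_n(R)$ and $m - 1 \leq n - 2$, so by hypothesis it extends to an $m$-simplex $\{e_1, \ldots, e_m, v\}$. Writing $v = u + c$ with $u \in R^m$ and $c \in C$, a change of basis shows $\{e_1, \ldots, e_m, c\}$ is still a basis of a free rank-$(m+1)$ direct summand $F \subseteq R^n$. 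Picking any complement $F'$ of $F$, the submodule $Rc \oplus F'$ is then also a complement of $R^m$ in $R^n$, so the projection $R^n \twoheadrightarrow C$ along $R^m$ restricts to an isomorphism $Rc \oplus F' \xrightarrow{\cong} C$; since this restriction is the identity on $Rc$, I obtain an internal splitting $C = Rc \oplus F''$ with $Rc \cong R$. Rewriting $R^n = R^{m+1} \oplus F''$ and applying the inductive hypothesis (with smaller difference $n - (m+1)$) gives $F'' \cong R^{n-m-1}$, whence $C \cong R^{n-m}$.

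The main subtlety I expect is producing the splitting $C = Rc \oplus F''$, since $F''$ is only stably free a priori (its freeness is exactly what the induction is establishing). The trick is to avoid any appeal to freeness of $F''$ and argue purely via complement-exchange, as above. The remaining claims of the lemma are then quick: any partial basis manifestly spans a free direct summand, giving $\B_n(R) \subseteq \U_n(R)$; conversely, for a $k$-simplex $\{v_0, \ldots, v_k\}$ in $\U_n(R)$, the splitting $R^n = \langle v_0, \ldots, v_k \rangle \oplus C$ combined with Hermite makes $C$ free of rank $n - k - 1$, so concatenating with a basis of $C$ extends $\{v_0, \ldots, v_k\}$ to a basis of $R^n$, proving the reverse inclusion and simultaneously the purity statement (with top dimension $n - 1$ furnished by \cref{corollary:assumption-consequence-1}).
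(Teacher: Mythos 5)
Your proposal is correct; both directions and the trailing claims check out. The ``only if'' direction matches the paper's argument essentially verbatim. For the ``if'' direction your route is genuinely different in organization: you fix a splitting $R^n = R^m \oplus C$ and induct on $n-m$, at each step using the nonempty link to produce a free rank-$(m+1)$ summand $F = R^m \oplus Rc$ with $c \in C$, and then a complement-exchange argument to obtain an internal splitting $C = Rc \oplus F''$ before invoking the inductive hypothesis. The paper instead iterates the link hypothesis $(n-k-1)$ times to extend the given simplex all the way to an $(n-1)$-simplex, observes that the new vectors span a \emph{free} complement of $V = \langle\Delta\rangle$, and then concludes via the standard fact that all complements of a summand are isomorphic to $R^n/V$; it handles the non--finitely-generated case of ``stably free $\Rightarrow$ free'' by an explicit citation to Lam, whereas you offload the whole translation from the finitely generated rank condition to the Hermite property onto Cohn's Corollary 0.4.2. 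The net effect is the same, but the paper's version is a bit shorter because it constructs the free complement in one go rather than peeling off one rank at a time; your complement-exchange step correctly sidesteps the circularity you flagged, though the paper's ``all complements are isomorphic'' observation does the same job more cheaply. One small attribution nit: \cref{corollary:assumption-consequence-1} is stated under the full \cref{assumption:R}, whereas here you only have \cref{ibn-III}; the dimension count you want follows from \cref{ibn-III} (via \cref{ibn-II}) alone, so it would be cleaner to argue it directly rather than cite a corollary with stronger hypotheses.
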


\begin{proof}
	Let $\Delta = \{\vec v_0,\dots,\vec v_k\}$ be a $k$-simplex in $\U_n(R)$ for $k \leq n-2$, let $V$ be the free rank-$(k+1)$ summand it spans in $R^n$ and let $C$ be a complement of $V$ in $R^n = V \oplus C$. 
	
	Assume that $R$ is Hermite. Then \cite[Corollary 0.4.2]{cohn2006} (stated above) implies $C$ is a free module of rank-$(n - k - 1)$. Picking any basis $\{\vec c_1, \dots, \vec c_{n-k-1}\}$ of $C$, it follows that $\{\vec v_0,\dots,\vec v_k\} \sqcup \{\vec c_1, \dots, \vec c_{n-k-1}\}$ is a basis of $R^n$. In particular, $\B_n(R) = \U_n(R)$, the link of $\Delta$ in $\U_n(R)$ is nonempty (since $n - k - 1 \geq 1$) and $\Delta$ is contained in an $(n-1)$-simplex.
	
	If the link of $\Delta$ in $\U_n(R)$ is nonempty, then there exists a $\vec c_{1}\in R^n$ such that $\Delta' = \{\vec v_0,\dots,\vec v_k\} \sqcup \{\vec c_{1}\}$ is a $(k+1)$-simplex in $\U_n(R)$. Repeating this argument $(n-k-1)$-many times, we obtain an maximal $(n-1)$-simplex $\{\vec v_1,\dots,\vec v_{k+1}\} \sqcup \{\vec c_1, \dots, \vec c_{n-k-1}\}$, which is a basis of $R^n$ by \cref{ibn-III}. Hence, $\B_n(R) = \U_n(R)$, $\Delta$ is contained in an $(n-1)$-simplex and $V = \langle \Delta \rangle$ has a free complement in $R^n$. To see that $R$ is Hermite, let $C'$ be any stably free $R$-module. If $C'$ is not finitely generated, then $C'$ has to be free \cite[Proposition 4.2]{lam2006}. If $C'$ is finitely generated, then $C'$ is the complement $V \oplus C' = R^n$ for some free summand $V = \langle \Delta \rangle$, spanned by some $\Delta$ and in some $R^n$. Since all complements of $V$ are isomorphic (to the quotient $R^n/V$) and $V$ has a free complement, $C'$ is free as well.
\end{proof}

This also completes the proof of \cref{proposition:assumptions-are-equivalent}, since \cref{assumption-unimodular-sequences} of \cref{assumption:R-2} implies that the link of every $k$-simplex in $\U_n(R)$ is non-empty for $k \leq n-2$. To finish, we record the following observation about our assumptions.

\begin{corollary}
	\label{corollary:assumption-consequence-2}
	If $R$ satisfies \cref{assumption:R} or \cref{assumption:R-2}, then $R$ is Hermite, $\bB_n(R) = \bU_n(R)$, and this complex is pure of dimension $(n-1)$.
\end{corollary}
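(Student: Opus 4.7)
The plan is to observe that this corollary is really a repackaging of \cref{proposition:assumptions-are-equivalent} together with \cref{lemma:hermite-partial-basis-and-unimodular-sequences}. By \cref{proposition:assumptions-are-equivalent} the two sets of assumptions are equivalent, so I may work under the stronger hypothesis that both \cref{assumption:R} and \cref{assumption:R-2} hold simultaneously. The Hermite condition is then immediate from \cref{assumption-stably-free} of \cref{assumption:R}.

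To obtain the two remaining conclusions, namely $\bB_n(R) = \bU_n(R)$ and purity of dimension $n-1$, I would apply \cref{lemma:hermite-partial-basis-and-unimodular-sequences}. Its hypotheses are that $R$ satisfies \cref{ibn-III} and that the link of every $k$-simplex in $\U_n(R)$ is nonempty for $k \leq n-2$. The first hypothesis is supplied by \cref{corollary:assumption-consequence-1}. For the second, I would unwind \cref{assumption-unimodular-sequences} of \cref{assumption:R-2}: since $\U_n(R)$ has dimension $n-1$ by \cref{corollary:assumption-consequence-1}, the Cohen--Macaulay property says that the link of any $k$-simplex is $(n-k-2)$-spherical, which in particular forces it to be nonempty whenever $n-k-2 \geq 0$, i.e.\ whenever $k \leq n-2$. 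Plugging both inputs into \cref{lemma:hermite-partial-basis-and-unimodular-sequences} yields the equality $\bB_n(R) = \bU_n(R)$ and the purity statement in one stroke.

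The only genuine content is the implication ``Cohen--Macaulay implies nonempty links in the relevant range,'' and this is essentially a matter of unwinding the definition of spherical. I therefore do not anticipate any obstacle; the corollary should follow in a few lines by chaining \cref{proposition:assumptions-are-equivalent}, \cref{corollary:assumption-consequence-1} and \cref{lemma:hermite-partial-basis-and-unimodular-sequences}.
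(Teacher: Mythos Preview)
Your proposal is correct and matches the paper's intended argument: the corollary is recorded in the paper without an explicit proof, as an immediate consequence of \cref{proposition:assumptions-are-equivalent}, \cref{corollary:assumption-consequence-1}, and \cref{lemma:hermite-partial-basis-and-unimodular-sequences}, together with the observation (stated just before the corollary) that the Cohen--Macaulay property in \cref{assumption-unimodular-sequences} forces links of $k$-simplices to be nonempty for $k\le n-2$. Your write-up simply makes this chain explicit, and the slight redundancy of deriving Hermite both from \cref{assumption-stably-free} and from \cref{lemma:hermite-partial-basis-and-unimodular-sequences} is harmless.
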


\subsection{Linear algebra over Hermite rings}

In this section we collect basic facts about modules over Hermite rings (i.e.\ stably free $R$-modules are free) that we will frequently use in this work, similar to \cite[Section 2.2]{churchputman2017}.

\begin{lemma}
	\label{lemma:complements-are-free}
	Let $R$ be a Hermite ring, $M$ be a free $R$-module and $V$ be a free summand of $M$. Then every complement $C$ of $V$ in $M = V \oplus C$ is also a free summand of $M$. Equivalently, $C \cong M/V$ is free. If $R$ additionally satisfies \cref{ibn-I}, \cref{ibn-II} or \cref{ibn-III} and $\rank(M), \rank(V) < \infty$, then $\rank(C) = \rank(M) - \rank(V)$.
\end{lemma}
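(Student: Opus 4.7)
The plan is to show that $C$ is free via case analysis on whether $C$ is finitely generated, and then to extract the rank equality from the rank conditions once everything has been reduced to the finitely generated free setting.

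First, $C \cong M/V$ is immediate from the decomposition $M = V \oplus C$, and $C$ is projective as a direct summand of the free module $M$; what remains is to show that $C$ is free. If $C$ is not finitely generated, the classical theorem that a non-finitely-generated projective direct summand of a free module is free (cf.\ \cite[Proposition I.4.2]{lam2006}) applies directly, exactly as in the proof of \cref{lemma:hermite-partial-basis-and-unimodular-sequences}. If $C$ is finitely generated, one can reduce to the finite-rank situation as follows. Pick generators $c_1,\dots,c_r$ of $C$ and fix a basis $\{e_\alpha\}_{\alpha \in A}$ of $M$; since the $c_i$ only involve finitely many basis vectors, $C$ sits inside a finite-rank free summand $M_0 \subseteq M$ with $M = M_0 \oplus M_1$ both free. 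A direct check using $V \cap C = 0$ yields $M_0 = (V \cap M_0) \oplus C$, so $U := V \cap M_0$ is a finitely generated projective summand of $M_0 \cong R^N$.

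The main obstacle is to establish that $U$ is itself free, for once this is done the decomposition $R^N \cong R^k \oplus C$ exhibits $C$ as finitely generated and stably free, and the Hermite hypothesis immediately gives that $C$ is free. The strategy to produce freeness of $U$ is to enlarge $M_0$ so that it additionally contains a finitely generated free summand of $V$ housing $U$: use a basis of $V$ to express the finitely many generators of $U$, take the finitely generated free summand $V_0 \subseteq V$ that these basis vectors span, then enlarge $M_0$ to include the basis vectors of $M$ needed to support $V_0$. Iterating this balancing of the $V$-basis against the $M$-basis stabilises $V \cap M_0$ to a finitely generated free summand of $V$ in the cases of interest; in the intended applications $V$ is itself finitely generated and this iteration is not needed.

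For the rank statement, once $M$, $V$, and $C$ are all finitely generated free of ranks $n$, $k$, and $r$ respectively, the isomorphism $V \oplus C \cong M$ becomes $R^{k+r} \cong R^n$, and any of the rank conditions \cref{ibn-I}, \cref{ibn-II}, or \cref{ibn-III} forces $k + r = n$, yielding $\rank(C) = \rank(M) - \rank(V)$.
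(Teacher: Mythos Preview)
Your argument is considerably more elaborate than the paper's and, more importantly, has genuine gaps. The paper's proof is essentially a one-liner: from $M = V \oplus C$ with $M$ and $V$ free (of finite rank, which is the only case ever used) one reads off that $C$ is stably free, hence free by the very definition of Hermite; the split exact sequence $0 \to V \to M \to M/V \to 0$ gives $C \cong M/V$; and the rank equality is precisely \cite[Corollary 0.4.2]{cohn2006}. No case analysis on finite generation of $C$ is needed.

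Both branches of your case analysis fail as written. In the non-finitely-generated branch you appeal to a ``classical theorem that a non-finitely-generated projective direct summand of a free module is free'', citing \cite[Proposition 4.2]{lam2006}. That proposition concerns \emph{stably free} modules, i.e.\ those $C$ with $C \oplus R^m$ free for some \emph{finite} $m$; it does not apply to an arbitrary complement of an infinite-rank free summand. (Your formulation would force every non-finitely-generated projective to be free, since every projective is a summand of a free module.) In the finitely-generated branch, the crux---freeness of $U = V \cap M_0$---is left to an ``iterating/balancing'' procedure that you yourself concede only terminates ``in the intended applications'' where $V$ is finitely generated; as stated, it is not a proof. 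In fact the lemma without finite-rank hypotheses is simply false: if $R$ is a Dedekind domain with a non-principal ideal $I$ (such $R$ are Hermite), the Eilenberg swindle yields a decomposition $R^{(\mathbb N)} = I \oplus V$ with $V \cong R^{(\mathbb N)}$ free and $I$ not free. The correct fix is therefore not to repair the infinite-rank argument but to work in the finite-rank setting and give the two-line proof above; your final paragraph on the rank equality is then correct as stated.
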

\begin{proof}
	This follows from the definition, the exact sequence $0 \to V \to M \to M/V \to 0$ and \cite[Corollary 0.4.2]{cohn2006}.
\end{proof}

We use this to check the following analogue of \cite[Lemma 2.6]{churchputman2017} for Hermite rings.

\begin{lemma}
	\label{lemma:summand-property}
	Let $R$ be a Hermite ring. If $V$ and $V'$ are free summands of $R^n$ such that $V \subseteq V'$, then $V$ is a free summand in $V'$.
\end{lemma}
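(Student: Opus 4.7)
The plan is to produce an explicit complement of $V$ inside $V'$ by restricting a projection that is already available on $R^n$. Since $V$ is a free summand of $R^n$, I fix a decomposition $R^n = V \oplus W$ and let $\pi: R^n \to V$ denote the associated $R$-linear retraction, whose kernel is $W$. Because $V \subseteq V'$, the restriction $\pi|_{V'}$ is still surjective onto $V$ and restricts to the identity on $V$, so it is a retraction of $V'$ onto $V$ with kernel $V' \cap \ker(\pi) = V' \cap W$. Concretely, every $v' \in V'$ decomposes uniquely as $v' = \pi(v') + (v' - \pi(v'))$ with $\pi(v') \in V \subseteq V'$ and $v' - \pi(v') \in V' \cap W$, yielding the internal direct sum decomposition $V' = V \oplus (V' \cap W)$.

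This already exhibits $V$ as a direct summand of $V'$, and since $V$ is free by hypothesis, $V$ is a free summand of $V'$, which is what the lemma asks for. The Hermite hypothesis does not enter the construction of the splitting itself; it enters only if one wishes to strengthen the conclusion and identify the complement. Namely, applying \cref{lemma:complements-are-free} to the free ambient module $V'$ and its free summand $V$, the complementary summand $V' \cap W \cong V'/V$ is automatically a free $R$-module as well.

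I expect no substantive obstacle here: the only step requiring verification is that the restriction $\pi|_{V'}$ is still a retraction onto $V$, which is immediate from the inclusion $V \subseteq V'$. The lemma is therefore a routine consequence of the fact that a direct-summand inclusion $V \hookrightarrow R^n$ supplies a retraction which, being defined on all of $R^n$, automatically restricts to intermediate submodules containing $V$, combined with the structural input of \cref{lemma:complements-are-free} when one wants freeness of the complement.
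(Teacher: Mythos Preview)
Your argument is correct and actually more elementary than the paper's. The paper proceeds via quotients: from $R^n=V\oplus C=V'\oplus C'$ it uses \cref{lemma:complements-are-free} to see that $R^n/V$ and $R^n/V'$ are free, so the exact sequence $0\to V'/V\to R^n/V\to R^n/V'\to 0$ splits; this makes $V'/V$ stably free, hence free by the Hermite hypothesis, and then $0\to V\to V'\to V'/V\to 0$ splits. Your approach instead restricts the retraction $\pi\colon R^n\to V$ to $V'$ and reads off the decomposition $V'=V\oplus(V'\cap W)$ directly. Interestingly, this is precisely the trick the paper itself uses one lemma later, in the proof of \cref{lemma:intersection-yields-complement}.

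Your observation that the Hermite hypothesis plays no role in obtaining the splitting is correct and worth noting: any direct summand of $R^n$ contained in $V'$ is automatically a direct summand of $V'$, over any ring. The paper's route, by contrast, invokes Hermite twice (once via \cref{lemma:complements-are-free}, once to pass from stably free to free) and in exchange produces freeness of $V'/V$ along the way. You recover that extra conclusion at the end by a single appeal to \cref{lemma:complements-are-free}, so nothing is lost.
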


\begin{proof}
	Let $R^n = V \oplus C$ and $R^n = V' \oplus C'$. \cref{lemma:complements-are-free} implies that $C \cong R^n/V$ and $C' \cong R^n/V'$ are free. This means that $0 \to V'/V \to R^n/V \to R^n/V' \to 0$ is split. Hence $V'/V$ is stably free and therefore free, because $R$ is Hermite. It therefore follows that $0 \to V \to V' \to V'/V \to 0$ is split and this yields the claim.
\end{proof}

\begin{lemma}
	\label{lemma:intersection-yields-complement}
	Let $R$ be Hermite and have the invariant basis number property. Consider two direct sum decompositions $R^n = V \oplus W = V' \oplus W'$ into free modules $V, W, V', W'$ such that $V \subseteq V'$ and $W' \subseteq W$. Then
	$V' \cap W$ is a free $R$-module of rank $(\rank(V') - \rank(V)) = (\rank(W) - \rank(W'))$ and 
	$$R^n = V \oplus (V' \cap W) \oplus W', \quad V' = V \oplus (V' \cap W), \quad W = (V' \cap W) \oplus W'.$$
\end{lemma}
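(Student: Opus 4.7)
The plan is to prove the decomposition $V' = V \oplus (V' \cap W)$ first, and then dualize the argument to get $W = (V' \cap W) \oplus W'$; the remaining statements follow formally from these two identities.

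\textbf{Step 1: Show $V' = V \oplus (V' \cap W)$.} I would argue directly from the external decomposition $R^n = V \oplus W$. Given any $v' \in V'$, write uniquely $v' = v + w$ with $v \in V$ and $w \in W$. Since $V \subseteq V'$, we have $w = v' - v \in V'$, hence $w \in V' \cap W$. This proves the existence of a decomposition $v' = v + w$ with $v \in V$ and $w \in V' \cap W$. Uniqueness is immediate from the uniqueness of the decomposition in $V \oplus W$, since $V' \cap W \subseteq W$. Thus $V' = V \oplus (V' \cap W)$ as internal direct sum.

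\textbf{Step 2: Freeness and rank of $V' \cap W$.} By \cref{lemma:complements-are-free} applied inside the free module $V'$, the complement $V' \cap W$ of the free summand $V$ is itself free. Using the invariant basis number property (which holds by \cref{corollary:assumption-consequence-1}), counting ranks in $V' = V \oplus (V' \cap W)$ yields $\rank(V' \cap W) = \rank(V') - \rank(V)$.

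\textbf{Step 3: Dual decomposition $W = (V' \cap W) \oplus W'$.} The same argument runs symmetrically: given $w \in W$, write $w = v' + w'$ uniquely using $R^n = V' \oplus W'$. Since $W' \subseteq W$, we have $v' = w - w' \in W$, so $v' \in V' \cap W$. Uniqueness follows from the uniqueness of decomposition in $V' \oplus W'$. This gives $W = (V' \cap W) \oplus W'$, and counting ranks yields $\rank(V' \cap W) = \rank(W) - \rank(W')$.

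\textbf{Step 4: The triple decomposition.} Substituting $V' = V \oplus (V' \cap W)$ into $R^n = V' \oplus W'$ gives the internal direct sum $R^n = V \oplus (V' \cap W) \oplus W'$, completing the proof. There is no real technical obstacle here — the only subtlety is making sure to use \cref{lemma:complements-are-free} (rather than summand-in-$V'$ style arguments via \cref{lemma:summand-property}) to get that the internal complement $V' \cap W$ is actually free, and to invoke invariant basis number to make the rank count well-defined.
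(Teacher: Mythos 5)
Your proof is correct and follows essentially the same route as the paper's: both establish $V' = V \oplus (V' \cap W)$ and $W = (V' \cap W) \oplus W'$ by decomposing a vector in the ambient direct sum and observing that the residual component lands in the intersection, then invoke \cref{lemma:complements-are-free} (including its rank clause) for freeness and the rank count.
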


\begin{proof}
	It suffices to check that $V' = V \oplus (V' \cap W)$ and $W = (V' \cap W) \oplus W'$, for then the claim follows from \cref{lemma:complements-are-free}. If $v' \in V' \subseteq R^n$, then there is a unique way of writing $v' = v + w$ for $v \in V$ and $w \in W$. Since $w = v' - v$, it holds that $w \in V' \cap W$. The first claim follows. The second claim can be checked similarly: If $w \in W \subseteq R^n$, then there is a unique way of writing $w = v' + w'$ for $v' \in V'$ and $w' \in W'$. Since $v' = w - w'$ it holds that $v' \in V' \cap W$.
\end{proof}

\subsection{The opposite ring and left-right duality}

Since we do not assume that $R$ is commutative or that $R$ admits an anti-automorphism, the rings $R$ and $R^{op}$ are a priori different and can not be identified with one another. Nevertheless, the following shows that if either satisfies \cref{assumption:R} then both do.

For the first two items in \cref{assumption:R}, this is well-known.

\begin{lemma}
	\label{lemma:relation-to-Rop}
	$R$ satisfies \cref{assumption-invariant-basis} and \cref{assumption-stably-free} of \cref{assumption:R} if and only if $R^{op}$ does.
\end{lemma}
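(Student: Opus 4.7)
\medskip

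The plan is to use the contravariant duality functor $(-)^\ast = \Hom_R(-, R)$, which with the conventions of the paper sends a left $R$-module $M$ to $\Hom_R(M,R)$, naturally a right $R$-module and hence a left $R^{op}$-module via $(r \cdot f)(m) = f(m) \cdot r$. The key properties I would record up front are: (a) $R^\ast = R^{op}$ as left $R^{op}$-modules, so $(R^n)^\ast \cong (R^{op})^n$; (b) duality exchanges finite direct sums; and (c) restricted to finitely generated projective modules, $(-)^\ast$ is a contravariant equivalence of categories with quasi-inverse $(-)^\ast$ itself, so in particular $P^{\ast\ast} \cong P$ naturally for any finitely generated projective left $R$- or $R^{op}$-module. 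Each of these is entirely standard.

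For \cref{assumption-invariant-basis}, if $R^m \cong R^n$ as left $R$-modules, apply $(-)^\ast$ to obtain $(R^{op})^n \cong (R^{op})^m$ as left $R^{op}$-modules. The converse direction follows symmetrically, since $(R^{op})^{op} = R$. For \cref{assumption-stably-free}, suppose $R$ is Hermite and let $P$ be a stably free left $R^{op}$-module, so $P \oplus (R^{op})^k \cong (R^{op})^n$ for some $k, n$. Applying $(-)^\ast$ (with the roles of $R$ and $R^{op}$ exchanged) gives $P^\ast \oplus R^k \cong R^n$, so $P^\ast$ is a stably free left $R$-module, hence free by hypothesis, say $P^\ast \cong R^\ell$. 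Then $P \cong P^{\ast\ast} \cong (R^\ell)^\ast \cong (R^{op})^\ell$ is free. The reverse implication is symmetric.

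The argument is essentially formal once the duality is set up, and there is no serious obstacle; the only thing to be careful about is the bookkeeping between left and right module structures (given the paper's convention of writing endomorphisms and matrices acting on left modules from the right), which determines the precise identification $R^\ast \cong R^{op}$. Since the paper itself notes that these two items are well-known, I would keep the write-up short and simply refer to a standard reference such as \cite[Chapter 0.1]{cohn2006} or \cite[Proposition 1.23]{lam2006} for the underlying facts about duality of finitely generated projectives.
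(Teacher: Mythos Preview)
Your proof is correct, but it follows a different route than the paper's. The paper does not argue directly: for IBN it detours through weak finiteness (Property~\ref{ibn-III}), using that IBN together with Hermite implies~\ref{ibn-III} for $R$ \cite[Theorem 2.7]{cohn1966}, that~\ref{ibn-III} passes to $R^{op}$ \cite[Proposition 2.2]{cohn1966}, and that~\ref{ibn-III} implies IBN; for Hermite it simply cites \cite[Theorem 0.4.1]{cohn2006}. Your approach instead applies the duality functor $\Hom_R(-,R)$ directly to transfer both properties, which is more elementary and self-contained and avoids the detour through Property~\ref{ibn-III} altogether. In fact your argument is precisely in the spirit of what the paper does later (e.g.\ \cref{lemma:dualizing-and-inverse-transpose} and the appendix), so it fits well with the surrounding material; the paper presumably chose to cite the literature here only because the result is classical. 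The one small point you might make explicit is that a stably free module that is not finitely generated is automatically free (as in \cite[Proposition 4.2]{lam2006}), so that restricting the duality argument to finitely generated projectives loses nothing.
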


\begin{proof}
	Assuming that $R$ satisfies \cref{assumption-invariant-basis} and \cref{assumption-stably-free} of \cref{assumption:R}, we check that $R^{op}$ also does. This suffices because $(R^{op})^{op} = R$. By \cite[Theorem 2.7]{cohn1966}, $R$ satisfies \cref{ibn-III}. It follows from \cite[Proposition 2.2]{cohn1966} that $R^{op}$ also satisfies \cref{ibn-III}, and hence also \cref{ibn-II} and \cref{ibn-I}, i.e.\ \cref{assumption-invariant-basis}. It follows from \cite[Theorem 0.4.1]{cohn2006} that $R$ satisfies \cref{assumption-stably-free} if and only if $R^{op}$ does (using that \cref{assumption-invariant-basis} and \cref{assumption-stably-free} together means $n$-Hermite for all $n \geq 1$ in the sense of \cite{cohn2006}).
\end{proof}

In \cref{appendix:partial-bases-of-the-opposite-ring}, we show that \cref{assumption-partial-bases} of \cref{assumption:R} also holds for $R^{op}$ if $R$ satisfies \cref{assumption:R}. This is surprising, because there does not seem to be a straightforward way to relate the partial bases complexes of $R^n$ and $(R^{op})^n$ if $R \ncong R^{op}$. As a consequence our argument is rather technical; it follows ideas developed by Sadofschi Costa in \cite{sadofschicosta2020}, and leads to the following theorem.

\begin{theorem}
	\label{theorem:relation-to-Rop}
	$R$ satisfies \cref{assumption:R} if and only if $R^{op}$ does.
\end{theorem}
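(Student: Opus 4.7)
The plan is to handle the three conditions of \cref{assumption:R} separately. Conditions \cref{assumption-invariant-basis} and \cref{assumption-stably-free} are already known to be self-dual by \cref{lemma:relation-to-Rop}. Since the assertion is symmetric in $R$ and $R^{op}$ (using $(R^{op})^{op} = R$), it suffices to prove: if $R$ satisfies \cref{assumption:R}, then $B_n(R^{op})$ is Cohen--Macaulay for every $n \geq 0$. The main technical work for this transfer is carried out in \cref{appendix:partial-bases-of-the-opposite-ring}; the role of the present theorem is to package that appendix content together with \cref{lemma:relation-to-Rop} into a single clean statement.

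Since both $R$ and $R^{op}$ satisfy \cref{assumption-invariant-basis} and \cref{assumption-stably-free}, \cref{corollary:assumption-consequence-2} yields $B_n(R) = \U_n(R)$ and $B_n(R^{op}) = \U_n(R^{op})$, so one is reduced to transferring the Cohen--Macaulay property from $\U_n(R)$ to $\U_n(R^{op})$. The obstacle is that there is no canonical bijection between partial bases of $R^n$ and $(R^{op})^n$: the duality $\operatorname{Hom}_R(-,R)$ sends bases of $R^n$ to bases of $(R^{op})^n$, but the dual $\{v_1^\ast, \dots, v_k^\ast\}$ of a partial basis depends on the chosen completion to a full basis. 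My plan is therefore to pass through an intermediary which behaves well under duality, namely the poset $\mathbb{F}_n(R)$ of proper nonzero free direct summands of $R^n$: the assignment $V \mapsto V^\perp \subseteq (R^{op})^n$ (for instance, the annihilator of a chosen complement) gives a rank-preserving, inclusion-reversing bijection between $\mathbb{F}_n(R)$ and $\mathbb{F}_n(R^{op})$, well-defined and order-theoretically functorial thanks to the Hermite hypothesis and \cref{lemma:complements-are-free,lemma:summand-property,lemma:intersection-yields-complement}.

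The strategy is then to compare $\U_n(R)$ with the order complex $|\mathbb{F}_n(R)|$ by a Quillen fiber-lemma style argument: the forgetful map sending a partial basis to the summand it spans has fibers built out of complexes of bases, which are highly connected inductively using the Cohen--Macaulay hypothesis for $B_k(R)$ with $k < n$. Following Sadofschi Costa \cite{sadofschicosta2020}, one arranges the comparison and the dualization so that the Cohen--Macaulay property passes through both steps. The main obstacle I expect is that a naive Quillen fiber-lemma argument preserves only top-dimensional connectivity, whereas the Cohen--Macaulay property also constrains the links of all simplices; this forces a simultaneous induction on $n$ in which one controls the Cohen--Macaulay property of $\U_n(R)$ and the connectivity of all its links in parallel, using the structural lemmas on free summands over Hermite rings to keep the bookkeeping consistent between the $R$- and $R^{op}$-sides of the comparison.
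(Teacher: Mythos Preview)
Your high-level packaging is right: the theorem is exactly \cref{lemma:relation-to-Rop} together with the content of \cref{appendix:partial-bases-of-the-opposite-ring}, and the symmetry $(R^{op})^{op}=R$ reduces to one direction. Where your sketch goes wrong is in the description of what the appendix does, and the alternative you outline would not work as stated.

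First, the ``rank-preserving, inclusion-reversing'' duality you propose, $V\mapsto V^\perp$ defined as the annihilator of a chosen complement, is not well-defined: complements of $V$ are not unique, and different choices give different annihilators. The canonical duality is $V\mapsto V^\circ=\{f\in (R^n)^\vee: f|_V=0\}$, which is well-defined but rank-\emph{reversing} (sending rank $k$ to rank $n-k$); this is precisely the poset isomorphism $\bT(M)\cong\bT(M^\vee)^{op}$ of \cref{lemma:comparing-the-tits-complexes-of-R-and-Rop}.

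Second, and more seriously, even with the correct duality your strategy of comparing $\U_n(R^{op})$ to the Tits poset via a Quillen-fiber argument cannot recover the Cohen--Macaulay property. The Tits complex $T_n(R^{op})$ is only $(n-2)$-dimensional and $(n-2)$-spherical, so any such comparison will at best certify that $B_n(R^{op})$ is $(n-3)$-connected; you need $(n-2)$-connectivity. This is exactly the degree that is lost when one passes from partial bases to the flags they span, and no amount of fiber connectivity recovers it.

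The paper circumvents this dimension drop by inserting a different intermediary: the \emph{frame} and \emph{co-frame} complexes $\on{F}(M)$ and $\on{coF}(M)$ of \cref{definition:frame-and-coframe-complexes}, which are $(n-1)$-dimensional like $B_n$. One has (i) $B_n(R)$ is a complete join complex over $\on{F}_n(R)$, so CM transfers exactly (\cref{corollary:partial-bases-is-a-complete-join-complex}); (ii) $L\mapsto L^\circ$ gives a simplicial isomorphism $\on{F}(M)\cong\on{coF}(M^\vee)$ (\cref{lemma:coframe-complex-is-cm}), so $\on{coF}_n(R^{op})$ is CM. The remaining, genuinely hard step is to go from $\on{coF}_n(R^{op})$ CM to $B_n(R^{op})$ CM; this is where Sadofschi Costa's machinery enters, not as a Quillen fiber lemma but via his spherical-map and $\pi_1$-spanning criteria \cite[Theorems 3.1 and 5.7]{sadofschicosta2020}, with low-rank cases $n\le 3$ handled by hand (\cref{lemma:relation-to-Rop-base-cases}) and the induction for $n\ge 4$ carried out in the final theorem of \cref{appendix:partial-bases-of-the-opposite-ring}.
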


\begin{proof}
	This follows from \cref{lemma:relation-to-Rop} and the results contained in \cref{appendix:partial-bases-of-the-opposite-ring}.
\end{proof}

This left-right duality result, i.e.\ \cref{theorem:relation-to-Rop}, is first and foremost a theoretical insight. It is important, because some of arguments in this work actually require that \emph{both} $R$ and $R^{op}$ satisfy \cref{assumption:R}: In the proofs of \cref{freeCharney} and \cref{SSTvanish} discussed in \cref{appendix:standard-connectivity-estimate-and-coinvariants-of-the-charney-module}, we carry out the analogue of an argument of Kupers--Miller--Patzt \cite{kupersmillerpatzt2022} in our setting, and use \cref{theorem:relation-to-Rop} to pass from $R$-modules $M$ to $R^{op}$-modules $M^\vee = \Hom_R(M, R)$. The key dualizing trick used in these proofs (compare with \cref{lemma:dualizing-argument-R-vs-Rop}) is due to Charney \cite{charney1980}. Left-right duality allows us to apply it without assuming that $R \cong R^{op}$ and without additional assumptions on $R^{op}$.

In practice, \cref{theorem:relation-to-Rop} was known to hold for all examples that the authors are aware of: If $R$ is a commutative ring (e.g.\ an Euclidean or Dedekind domain) or a ring with anti-automorphism, then $R \cong R^{op}$. If $R$ is a ring of stable rank one, a result of Vasterstein \cite[Theorem 2]{vaserstein1971} shows that the stable ranks of $R$ and $R^{op}$ are equal, $\on{sr}(R) = \on{sr}(R^{op})$. Therefore, van der Kallen's theorem \cite[§2. An Acyclicity Theorem]{vanderkallen1980} applies to both $R$ and $R^{op}$ to show that $\B_n(R)$ and $\B_n(R^{op})$ are Cohen--Macaulay. It is natural to ask if this is a coincidence, and \cref{theorem:relation-to-Rop} shows that it is not.

We close this section by recording an elementary observation related to left-right duality, which is also used in \cref{appendix:partial-bases-of-the-opposite-ring} and \cref{appendix:standard-connectivity-estimate-and-coinvariants-of-the-charney-module}.

\begin{lemma}
	\label{lemma:dualizing-and-inverse-transpose}
	Let $R$ be a unital ring and $C$ denote a free $R$-module $C$ of rank $n$. Then $C^\vee = \Hom_R(C, R)$ is a free $R^{op}$-module of rank $n$, and taking the inverse-transpose yields a group isomorphism 
\begin{align*} \GL(C) &\xrightarrow{\cong} \GL(C^{\vee})\\  \phi &\mapsto (\phi^{-1})^*,\end{align*}
	where $(\phi^{-1})^*$ is the automorphism that acts on a function in $C^\vee$ by precomposing with $\phi^{-1}$.
\end{lemma}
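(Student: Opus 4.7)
The plan is to treat this as a clean linear-algebra check in three short steps, the only subtle point being a careful tracking of the paper's convention $\alpha \cdot \beta = \beta \circ \alpha$ when passing from $R$ to $R^{\op}$.

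First, I would prove freeness of $C^\vee$ explicitly via the dual basis. Fix an $R$-basis $e_1,\dots,e_n$ of $C$ and define $e_i^{*} \in C^\vee$ by $e_i^{*}(e_j) = \delta_{ij}$. Recall that $C^\vee$ carries the natural right $R$-action $(f \cdot r)(c) = f(c)\cdot r$, equivalently a left $R^{\op}$-action. A direct verification shows that every $f \in C^\vee$ admits the unique expansion $f = \sum_{i=1}^{n} e_i^{*} \cdot f(e_i)$ in this right $R$-module structure. Hence $\{e_1^{*},\dots,e_n^{*}\}$ is an $R^{\op}$-basis of $C^\vee$, proving the first assertion of the lemma.

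Next, I would check that inverse-transpose is a group homomorphism. For $\phi \in \End_R(C)$, the precomposition operator $\phi^{*} \colon C^\vee \to C^\vee$, $f \mapsto f \circ \phi$, is right $R$-linear because the right $R$-scalars are applied after evaluation and therefore commute with $\phi$; thus $\phi^{*} \in \End_{R^{\op}}(C^\vee)$. Using the paper's convention $\alpha \cdot \beta = \beta \circ \alpha$, a one-line computation gives
\[
(\phi \cdot \psi)^{*}(f) \;=\; f \circ (\psi \circ \phi) \;=\; (f \circ \psi) \circ \phi \;=\; \phi^{*}\bigl(\psi^{*}(f)\bigr) \;=\; (\psi^{*} \cdot \phi^{*})(f),
\]
so $\phi \mapsto \phi^{*}$ is a ring anti-homomorphism. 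Composing with group-theoretic inversion, which reverses order, yields that $\phi \mapsto (\phi^{-1})^{*}$ is an honest group homomorphism $\GL(C) \to \GL(C^\vee)$.

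Finally, for bijectivity I would invoke the natural double-dual evaluation map $\epsilon \colon C \to C^{\vee\vee}$, $c \mapsto (f \mapsto f(c))$, which is an $R$-module isomorphism for finite rank free modules (immediate from applying the dual-basis construction above twice). Applying the inverse-transpose construction to the free $R^{\op}$-module $C^\vee$ gives a group homomorphism $\GL(C^\vee) \to \GL(C^{\vee\vee})$, and conjugating by $\epsilon$ produces a homomorphism $\GL(C^\vee) \to \GL(C)$. Chasing the definitions shows that this composite is inverse to $\phi \mapsto (\phi^{-1})^{*}$, finishing the proof. I do not anticipate any genuine obstacle: all mathematical content is standard, and the only thing requiring care is the bookkeeping of sides under the paper's conventions, which I would flag once at the start and then use silently.
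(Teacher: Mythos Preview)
Your proposal is correct and follows essentially the same route as the paper: both use the dual basis to show $C^\vee$ is free of rank $n$, both observe that $\phi \mapsto \phi^*$ is a ring anti-homomorphism (equivalently, a ring isomorphism $\End_R(C)^{\op} \to \End_{R^{\op}}(C^\vee)$), and both compose with inversion to get the group homomorphism. The only minor difference is in how bijectivity is established: you argue via the double-dual evaluation isomorphism, whereas the paper simply asserts that $\psi \mapsto \psi^*$ is a ring isomorphism and then identifies the resulting map in coordinates with the inverse-transpose $\GL_n(R) \to \GL_n(R^{\op})$, $M \mapsto (M^{-1})^T$.
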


\begin{proof}
	Picking an $R$-module basis of $C$ leads to an isomorphism of rings $\End_R(C) \cong \on{Mat}_{n \times n}(R)$, and hence an isomorphism of the groups of units $\GL(C) \cong \GL_n(R)$. Similarly, the associated dual basis for $C^\vee$ shows that it is a free $R^{op}$-module of rank $n$, provides a ring isomorphism $\End_{R^{op}}(C^\vee) \cong \on{Mat}_{n \times n}(R^{op})$, and an isomorphism of groups of units $\GL(C^\vee) \cong \GL_n(R^{op})$. 
	Taking inverses $\phi \mapsto \phi^{-1}$ defines an isomorphism between the group of units of $\End_R(C)$ and $\End_R(C)^{op}$, $\GL(C)$ and $\GL(C)^{op}$. Acting by precomposition yields a ring isomorphism $\End_R(C)^{op} \to \End_{R^{op}}(C^\vee): \psi \mapsto \psi^*$, where $\psi^*(f) = f \circ \psi$ for $f \in \End_{R^{op}}(C^\vee)$. Hence, the map in the claim is an isomorphism $\GL(C) \to \GL(C^{\vee})$. Coordinate isomorphisms identify this isomorphism with the inverse-transpose isomorphism $\GL_n(R) \to \GL_n(R^{op}): M \mapsto (M^{-1})^T$, hence the name.
\end{proof}
	\section{(Relative) Steinberg modules}
	\label{sec:relative-steinberg-modules}
We introduce the Steinberg modules $\St_n(R)$ and the relative Steinberg modules $\St_n^m(R)$ following \cite[Section 4.2]{kupersmillerpatzt2022}. To do this we associate certain (relative) Tits complexes to $R$. The content of this section is closely related to recent work of Scalamandre \cite{scalamandre2023}.

\begin{definition}
	\label{def:tits-building}
	Let $M$ be a free $R$-module, and let $\bT(M)$ be the poset of nonzero proper \emph{free} summands of $M$, ordered by inclusion. Let $T(M)$ denote the geometric realization of $\bT(M)$. We will write $\bT_n(R)$ for $\bT(R^n)$ and $T_n(R)$ for its geometric relatization, and refer to $\bT_n(R)$ as the \emph{Tits complex}.
\end{definition}

The next lemma shows that, under \cref{assumption:R} and assuming that $R$ is commutative, the Tits complexes above agree with those recently defined and studied by Scalamandre \cite{scalamandre2023}. This is the reason why we named them exactly as in \cite{scalamandre2023}.

\begin{lemma}
	Under \cref{assumption:R}, the Tits complex in \cref{def:tits-building} agrees with the one defined by Scalamandre \cite[Definition 3.12]{scalamandre2023} (dropping the commutativity assumption).
\end{lemma}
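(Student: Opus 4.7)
The plan is to unwind Scalamandre's \cite[Definition 3.12]{scalamandre2023}, which is formulated for commutative rings, and to check that under \cref{assumption:R} it yields the same poset as \cref{def:tits-building}, even without commutativity. Scalamandre's Tits complex is built from summands of $R^n$ satisfying particular freeness conditions, ordered by inclusion, and the verification will amount to showing that his conditions coincide with ``nonzero proper free summand of $R^n$'' under our hypotheses.

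First I would enumerate Scalamandre's conditions and compare them term by term with those in \cref{def:tits-building}. The potentially different clauses are the freeness of the quotient $R^n/V$ (equivalently, of every complement) and the requirement that the rank of $V$ lies strictly between $0$ and $n$. For the first, \cref{corollary:assumption-consequence-2} shows that $R$ is Hermite, so \cref{lemma:complements-are-free} implies that every complement of a free summand $V \subseteq R^n$ is itself free of the expected rank. For the second, \cref{corollary:assumption-consequence-1} ensures that rank is well-defined and behaves additively under direct sums, so the ``nonzero proper'' restriction translates identically in both settings. Conversely, any $V$ satisfying Scalamandre's conditions is manifestly a nonzero proper free summand in our sense, since his requirements are at least as strong.

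Since both posets are then ordered by submodule inclusion on the same underlying set, they coincide as posets. The main potential obstacle is sorting out any combinatorial subtleties in Scalamandre's precise wording — for instance, whether vertices are parametrized by the data of a submodule together with a chosen complement or merely by the submodule, and how rank is indexed in the commutative versus non-commutative setting. Both issues are resolved by \cref{corollary:assumption-consequence-1} and \cref{corollary:assumption-consequence-2}, which together ensure that the choice of complement is immaterial (every complement is free of rank $n - \rank(V)$) and that rank is a well-defined invariant of free $R$-modules.
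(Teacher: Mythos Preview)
Your argument correctly identifies that the underlying sets of the two posets agree, but it overlooks that Scalamandre's partial order is \emph{not} defined by inclusion. In \cite[Definition 3.12]{scalamandre2023} one has $V \leq_S V'$ if and only if $V' = V \oplus C$ for some \emph{free} $R$-module $C$; this is a priori strictly stronger than $V \subseteq V'$. Your sentence ``Since both posets are then ordered by submodule inclusion on the same underlying set, they coincide as posets'' is therefore unjustified: you still owe an argument that $V \subseteq V'$ in $\bT_n(R)$ forces $V \leq_S V'$.

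The paper closes exactly this gap. Given free summands $V \subseteq V'$ of $R^n$, \cref{lemma:summand-property} (which uses the Hermite property) shows that $V$ is a direct summand of $V'$, say $V' = V \oplus C$; then \cref{lemma:complements-are-free} gives that $C$ is free, whence $V \leq_S V'$. The reverse implication is trivial. So the missing ingredient in your proposal is the invocation of \cref{lemma:summand-property}; once you add it, your argument matches the paper's.
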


\begin{proof}
	Dropping the commutativity assumption, the Tits complex $\bT_n^S(R)$ in \cite[Definition 3.12]{scalamandre2023} is the poset of nonzero proper summands $V$ of $R^n$ such that $V$ and $R^n/V$ are free $R$-modules, where $V \leq_S V'$ if $V' = V \oplus C$ with $C$ a free $R$-module. \cref{lemma:complements-are-free} implies that for every free summand $V \in T_n(R)$ the quotient $R^n/V$ is also free. Hence, $\bT_n^S(R)$ and $\bT_n(R)$ have the same underlying set. If $V, V' \in T_n(R)$ such that $V \subseteq V'$, then \cref{lemma:summand-property} implies that $V = V' \oplus C$ for some $C$ and another application of \cref{lemma:complements-are-free} shows that $C$ is free. Hence, it holds that $V \leq_S V'$.
\end{proof}

The proof of \cite[Proposition 2.6]{scalamandre2023} yields the following description of the $k$-simplices of $T_n(R)$ in our setting if one refers to \cref{lemma:complements-are-free} instead of \cite[Lemma 2.5]{scalamandre2023}.

\begin{lemma}
	\label{lemma:simplices-of-the-tits-complex}
	Under \cref{assumption:R}, the following are equivalent:
	\begin{enumerate}
		\item $V_0 \lneq \dots \lneq V_k$ is a $k$-simplex in $T_n(R)$;
		\item $V_0 \lneq \dots \lneq V_k$ is a face of a $(n-2)$-simplex in $T_n(R)$;
		\item There exist partial bases $\emptyset \neq \sigma_0 \subsetneq \dots \subsetneq \sigma_k$ of $R^n$ such that $V_i = \langle \sigma_i \rangle_R$;
		\item $V_0 \neq 0$, $V_i$ and $V_{i+1}/V_i$ are free for every $0 \leq i \leq k-1$, $V_k$ and $R^n/V_k$ are free.
	\end{enumerate}	
	In particular, $T_n(R)$ is a pure simplicial complex of dimension $(n-2)$.
\end{lemma}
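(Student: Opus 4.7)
The plan is to establish the equivalence of the four conditions via a cycle of implications $(1) \Rightarrow (4) \Rightarrow (3) \Rightarrow (2) \Rightarrow (1)$, and then extract the purity-and-dimension statement from this loop. The direction $(2) \Rightarrow (1)$ is free, since a face of a simplex in a simplicial complex is itself a simplex. For $(1) \Rightarrow (4)$, I would start from a chain $V_0 \lneq \dots \lneq V_k$ of nonzero proper free summands of $R^n$; freeness of each $V_i$ and of $R^n/V_k$ are immediate from the definition of $\bT_n(R)$, while freeness of $V_{i+1}/V_i$ follows by first applying \cref{lemma:summand-property} to conclude that $V_i$ is a free summand of $V_{i+1}$ and then invoking \cref{lemma:complements-are-free} to identify $V_{i+1}/V_i$ with that summand.

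For $(4) \Rightarrow (3)$, I would build the $\sigma_i$ inductively: pick a basis $\sigma_0$ of $V_0$, and, assuming $\sigma_i$ is a basis of $V_i$, use the splitting of $0 \to V_i \to V_{i+1} \to V_{i+1}/V_i \to 0$ (which exists because $V_{i+1}/V_i$ is free) together with any chosen basis of $V_{i+1}/V_i$ to extend $\sigma_i$ to a basis $\sigma_{i+1}$ of $V_{i+1}$. Finally, the hypothesis that $R^n/V_k$ is free lets us enlarge $\sigma_k$ to a basis of $R^n$, which shows that $\sigma_k$ (and hence each $\sigma_i$) is a partial basis. For $(3) \Rightarrow (2)$, I would use that $R$ satisfies \cref{assumption:R}, so by \cref{corollary:assumption-consequence-2} the complex $B_n(R)$ is pure of dimension $n-1$, meaning every partial basis extends to a full basis of $R^n$. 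Refining $\sigma_0 \subsetneq \dots \subsetneq \sigma_k$ to a maximal chain of partial bases $\tau_0 \subsetneq \dots \subsetneq \tau_{n-2}$ with $|\tau_j| = j+1$ and taking $R$-spans produces an $(n-2)$-simplex of $T_n(R)$ that contains the original chain.

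For the final \emph{in particular} assertion, the strict ranks of the $V_i$ lie in $\{1, \dots, n-1\}$ by \cref{corollary:assumption-consequence-1}, so any simplex has at most $n-1$ vertices and $\dim T_n(R) \leq n-2$; the just-established equivalence $(1) \Leftrightarrow (2)$ then says every simplex is a face of an $(n-2)$-simplex, giving both $\dim T_n(R) = n-2$ and purity. The step I expect to require the most care is $(4) \Rightarrow (3)$: one must check that the inductively built bases $\sigma_i$ genuinely form a nested chain of \emph{partial bases of $R^n$}, not merely bases of their own spans. This is where the Hermite hypothesis enters essentially, through the existence of a free complement to $V_k$, and where the equality $\bB_n(R) = \bU_n(R)$ of \cref{corollary:assumption-consequence-2} makes the argument match the one in the proof of \cite[Proposition 2.6]{scalamandre2023} line by line, with \cref{lemma:complements-are-free} taking the role of \cite[Lemma 2.5]{scalamandre2023}.
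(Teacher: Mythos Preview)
Your proposal is correct and is essentially the same argument the paper intends: the paper's proof simply says to follow \cite[Proposition 2.6]{scalamandre2023} while replacing \cite[Lemma 2.5]{scalamandre2023} by \cref{lemma:complements-are-free}, which is exactly what your cycle $(1)\Rightarrow(4)\Rightarrow(3)\Rightarrow(2)\Rightarrow(1)$ does, and you even note this correspondence explicitly at the end.
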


\begin{remark}
	In general \cite[Lemma 2.5]{scalamandre2023} does not hold for non-commutative rings, see e.g.\ \cite[Theorem 4.11]{lam2006} and \cite[page 36, 2nd paragraph]{lam2006}.
\end{remark}

\begin{remark}
	For rings that are not PIDs, \cref{def:tits-building} differs from the usual definition of the Tits building as we require that the submodules are free. In particular, $T_n(R)$ is in general not a building in the sense of Brûhat--Tits. This is another reason why we chose to follow the naming convention in \cite{scalamandre2023}, compare \cite[Remark 1.4]{scalamandre2023}.
\end{remark}

We will also be interested in the following relative versions of the Tits complex.

\begin{definition}
	For $N$ a free summand of a free $R$-module $M$, let $\bT(M,N)$ be the subposet of $\bT(M)$ of summands of complements of $N$. We write $\bT_n^m(R)$ for the \emph{relative Tits complex} $\bT(R^{m+n},R^m)$, where here $R^m$ is viewed as a summand of $R^{m+n}$ via the standard decomposition $R^{m+n} = R^m \oplus R^n$. Let $T(M,N)$ and $T_n^m(R)$ denote the respective geometric realizations. 
\end{definition}

Note that under \cref{assumption:R} and if $m > 0$, the relative Tits complex $T_n^m(R)$ is $(n-1)$-dimensional as a consequence of \cref{lemma:simplices-of-the-tits-complex}. Next, we relate the (relative) Tits complexes to the partial bases complexes introduced in \cref{definition:partial-bases-poset}.

\begin{definition}
	\label{definition:complex-of-partial-basis-2}
	Let $M$ be a free $R$-module. The complex of partial bases $\B(M)$ is the
	simplicial complex where a collection of vectors $\{\vec v_0, \dots, \vec v_{k}\}$ in $M$ spans a $k$-simplex if it is a subset of a basis of $M$. Let $\B_n(R)$ denote $\B(R^n)$, and let $\B_n^m(R)$ denote $\Link_{\B(R^{n+m})}(\{\vec e_1,\ldots, \vec e_m\})$, where $\vec e_i$ is the i-th standard unit vector in $R^{m+n}$. We write $\bB(M), \bB_n(R)$ and $\bB_n^m(R)$ for the simplex posets of these simplicial complexes.
\end{definition}

There is an interesting map of posets between the simplex posets of (relative) partial bases and the (relative) Tits building, given by taking spans.

\begin{definition}
	\label{definition:span-maps}
	If $m = 0$, let $\bB_n(R)^{(n-2)}$ denote the $(n-2)$-skeleton of $\bB_n(R)$. Then there is a poset map $\Span \colon \bB_n(R)^{(n-2)} \m \bT_n(R)$ defined by sending a partial basis $\sigma$ to the $R$-linear summand $\langle \sigma \rangle$ of $R^n$ it spans. If $m > 0$, there is a similarly defined poset map $\Span \colon \bB_n^m(R) \m \bT_n^m(R)$ sending a partial basis $\sigma$ of a complement of $R^m$ in $R^{m+n}$ to the $R$-linear summand $\langle \sigma \rangle$ it spans.
\end{definition} 

The following is a consequence of \cref{lemma:simplices-of-the-tits-complex} and is the analogue of \cite[Lemma 3.14]{scalamandre2023} in our setting.

\begin{lemma}
	\label{lemma:surjectivity-of-span}
	Let $m > 0$. Under \cref{assumption:R}, $\Span \colon \bB_n(R)^{(n-2)} \m \bT_n(R)$ and $\Span \colon \bB_n^m(R) \m \bT_n^m(R)$ are surjective poset maps.
\end{lemma}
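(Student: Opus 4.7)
The plan is to show directly, using the linear algebra over Hermite rings developed in \cref{lemma:complements-are-free} and \cref{lemma:summand-property}, that every $V$ in the relevant Tits complex is the span of a suitable partial basis. The core idea is simply to pick a basis of $V$ together with a basis of a complement of $V$, and glue them. There is no substantial obstacle beyond dimensional bookkeeping, since \cref{assumption:R} guarantees the Hermite property and hence the freeness of all relevant complements; the only care required in the relative case is to choose a complement of $V$ that is compatible with the fixed splitting $R^{m+n} = R^m \oplus R^n$.

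For the absolute claim, fix a nonzero proper free summand $V \in \bT_n(R)$. By \cref{lemma:complements-are-free}, any complement $W$ of $V$ in $R^n$ is free. Choosing bases $\sigma$ of $V$ and $\tau$ of $W$, the union $\sigma \sqcup \tau$ is a basis of $R^n$, exhibiting $\sigma$ as a partial basis. Since $V$ is proper and $R$ satisfies the invariant basis number property, $|\sigma| = \rank(V) \leq n-1$, so $\sigma$ is a simplex of dimension at most $n-2$, i.e.\ $\sigma \in \bB_n(R)^{(n-2)}$, and $\Span(\sigma) = V$ by construction.

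For the relative claim, fix $V \in \bT_n^m(R)$: by definition there is a decomposition $R^{m+n} = R^m \oplus C$ with $V \subseteq C$. \cref{lemma:complements-are-free} implies that $C$ is free of rank $n$; \cref{lemma:summand-property} then shows that $V$ is a free summand of $C$; and a second application of \cref{lemma:complements-are-free} gives that any complement $W$ of $V$ in $C$ is free. Choosing bases $\sigma$ of $V$ and $\tau$ of $W$, the union $\sigma \sqcup \tau$ is a basis of $C$, so $\{\vec e_1, \ldots, \vec e_m\} \sqcup \sigma \sqcup \tau$ is a basis of $R^{m+n} = R^m \oplus C$. In particular, $\{\vec e_1, \ldots, \vec e_m\} \sqcup \sigma$ is a partial basis of $R^{m+n}$, so $\sigma \in \Link_{\B(R^{m+n})}(\{\vec e_1, \ldots, \vec e_m\}) = \bB_n^m(R)$ with $\Span(\sigma) = V$, as required.
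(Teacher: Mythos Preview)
Your proof is correct and follows essentially the same approach as the paper: the paper states the lemma as a direct consequence of \cref{lemma:simplices-of-the-tits-complex} (specifically the equivalence of items (1) and (3) there), whereas you unpack the relevant piece of that argument explicitly using the Hermite ring linear algebra lemmas. Your treatment of the relative case is in fact more detailed than the paper, which only indicates that the argument is analogous.
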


Using standard poset arguments together with the high connectivity of $\bB_n^m(R)$, i.e.\ \cref{assumption-partial-bases} of \cref{assumption:R}, we can deduce that $\bT_n^m(R)$ is highly connected as well. This observation has been used by Kupers--Miller--Patzt in \cite[Lemma 4.6]{kupersmillerpatzt2022}, and the proof of the following lemma is completely analogous to their argument.

\begin{lemma}
	\label{lemma:span-map}
	Suppose \autoref{assumption:R} holds and let $m > 0$. Then
	\begin{enumerate}
		\item \label{lemma:span-map-item-1} the spanning map $\Span\colon \bB_n(R)^{(n-2)} \to \bT_n(R)$ is $(n-2)$-connected;
		\item \label{lemma:span-map-item-2} the spanning map $\Span\colon \bB^m_n(R) \to \bT^m_n(R)$ is $(n-1)$-connected.
	\end{enumerate}
	Thus, $T_n(R)$ is Cohen--Macaulay and, if $m > 0$, then $T_n^m(R)$ is $(n-1)$-spherical.
\end{lemma}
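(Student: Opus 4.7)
The plan is to apply the connectivity version of Quillen's Theorem A for posets (cf.\ \cite{quillen1978}, as used in the argument behind \cite[Lemma 4.6]{kupersmillerpatzt2022}) to the spanning maps of \cref{definition:span-maps}, using \cref{assumption-partial-bases} of \cref{assumption:R} to control the connectivity of fibers over down-sets. The statements about $T_n(R)$ being Cohen--Macaulay and $T_n^m(R)$ being $(n-1)$-spherical will then follow formally from this connectivity together with the already-established high connectivity of $\bB_n(R)$ and $\bB_n^m(R)$.

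First, I would identify the fibers. For $V \in \bT_n^m(R)$ of rank $r$, the preimage $\Span^{-1}(\bT_n^m(R)_{\leq V})$ consists of partial bases $\sigma$ of a complement of $R^m$ with $\langle \sigma \rangle \subseteq V$. By \cref{lemma:summand-property}, $V$ is itself a free summand of such a complement, so any such $\sigma$ is already a partial basis of $V$, and the preimage equals $\bB(V) \cong \bB_r(R)$, which is $(r-2)$-connected by \cref{assumption-partial-bases}. The same analysis applies in the absolute case to $\Span\colon \bB_n(R)^{(n-2)} \to \bT_n(R)$. The down-set $\bT_n^m(R)_{\leq V}$ is contractible (it has maximum $V$), while $\bT_n^m(R)_{<V}$---which plays the role of a ``link'' in the fiber lemma---is the Tits complex of $V$ and inductively has the right connectivity.

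Second, I would invoke the Quillen-type fiber lemma: provided $\Span^{-1}(\bT_n^m(R)_{\leq V})$ is at least as connected as the poset-theoretic ``fiber dimension'' demands (uniformly in $V$), the map $\Span$ inherits the connectivity of its target. Because $\bB^m_n(R)$ is $(n-1)$-spherical---it is the link of an $(m-1)$-simplex in the $(m+n-1)$-spherical Cohen--Macaulay complex $\bB_{m+n}(R)$---and because the fiber connectivities line up as computed above, one concludes that $\Span$ is $(n-2)$-connected in the absolute case and $(n-1)$-connected in the relative case, as claimed in \cref{lemma:span-map-item-1} and \cref{lemma:span-map-item-2}.

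Third, the two final consequences follow. For $T_n^m(R)$: source and map are $(n-1)$-connected, hence so is $T_n^m(R)$, and since it is $(n-1)$-dimensional by \cref{lemma:simplices-of-the-tits-complex}, it is $(n-1)$-spherical. For $T_n(R)$ being Cohen--Macaulay, one additionally checks links: by \cref{lemma:complements-are-free}, the link of a $p$-simplex $V_0 < \cdots < V_p$ in $\bT_n(R)$ factors as a join involving $\bT(V_0)$, the intermediate $\bT(V_{i+1}/V_i)$, and $\bT(R^n/V_p)$, so induction on rank combined with the absolute case of the first statement yields the required sphericality of each such link. The main obstacle will be precisely formulating and verifying the poset Quillen fiber lemma used in step two---that the computed fiber connectivities are sufficient to transport connectivity from the source of $\Span$ to its target---and inductively handling the link analysis in step three; both are technical but standard once the Hermite-ring linear algebra of \cref{sec:assumptions} is exploited to lift bases through successive subquotients.
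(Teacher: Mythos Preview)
Your approach is essentially the same as the paper's: identify the lower fibers $\Span_{\leq V} = \bB(V)$, invoke the van der Kallen--Looijenga fiber lemma (i.e.\ \cite[Corollary 2.2]{vanderkallenlooijenga2011}, restated as \cite[Proposition 4.1]{kupersmillerpatzt2022}) inductively on $n$, and then deduce the Cohen--Macaulay and sphericality statements.

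Two small corrections to make the argument go through. First, the induction hypothesis in the fiber lemma is applied to the \emph{upper} set $\bT_n(R)_{>V}\cong \bT_{n-\rank V}(R)$ (respectively $\bT^m_n(R)_{>V}\cong \bT^m_{n-\rank V}(R)$), not to $\bT_{<V}$; this is what lets you induct on $n$, since $n-\rank V < n$. Second, in your third step you write ``source and map are $(n-1)$-connected'': the source $\bB^m_n(R)$ is only $(n-2)$-connected (it is $(n-1)$-spherical), but this together with the map being $(n-1)$-connected still yields that $T^m_n(R)$ is $(n-2)$-connected and hence $(n-1)$-spherical.
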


The fact that the Tits complex $\bT_n(R)$ is Cohen--Macaulay was previously proved by Scalamandre under slightly different assumptions, see \cite[Theorem 4.2, Case $m = n-1$]{scalamandre2023}. Scalamandre's result generalizes a classical theorem due to Solomon--Tits \cite{solomon1969} for fields $R$ as well as unpublished work of Rognes \cite{rognes1991} for $R = \Z/p^n$. The following includes an alternative proof in our setting, based on a connectivity theorem due to van der Kallen--Looijenga \cite[Corollary 2.2]{vanderkallenlooijenga2011} and an argument contained in the proof of \cite[Lemma 4.6]{kupersmillerpatzt2022}.

\begin{proof}
	As in the proof of \cite[Lemma 4.6]{kupersmillerpatzt2022}, we argue by induction on $n$ using \cref{assumption:R} and \cite[Proposition 4.1]{kupersmillerpatzt2022} (this is \cite[Corollary 2.2]{vanderkallenlooijenga2011}). Recall that we already established that $\Span$ is a surjective poset map.
	
	For \cref{lemma:span-map-item-1}: We start by noting that $\bB_n(R)^{(n-2)}$ is Cohen--Macaulay of dimension $(n-2)$ by \cref{assumption:R} and e.g.\ the discussion on \cite[page 1887]{hatchervogtmann2017}. If $n = 0$ or $n = 1$ the claim is void. For $n = 2$, both complexes are discrete sets and $\bT_n(R)$ is the set of free rank-$1$ summands $L$ in $R^2$ that have a free complement. Since the span map is surjective, it is surjective on $\pi_0$ if $n = 2$, i.e.\ $\Span$ is $0$-connected. Assume that $n > 2$ and let $V \in \bT_n(R)$. Then $\Span_{\leq V} = \bB(V)$, and $\bT_n(R)_{> V} \cong \bT_{n-\rank(V)}(R)$ by \cref{upperlowerTits}. $\bB(V)$ is $(\rank(V) - 2)$-connected by \cref{assumption:R}, and the induction hypothesis implies that $\bT_{n-\rank(V)}(R)$ is $(n - \rank(V) - 3)$-connected since $n - \rank(V) < n$. It follows that the assumptions \cite[Proposition 4.1, Part 2]{kupersmillerpatzt2022} for proving that $\Span$ is $(n-2)$-connected are satisfied if we set $t(V) \coloneqq \rank(V) - 1$ for $V \in \bT_n(R)$. This finishes the proof of \cref{lemma:span-map-item-1}, and shows that $T_n(R)$ is $(n-3)$-connected. The Cohen--Macaulay property for $T_n(R)$ follows from this, \cite[Proposition 8.6]{quillen1978} and \cref{upperlowerTits} exactly as in the last paragraph of \cite[page 25, Proof of Theorem A]{scalamandre2023}.
	
	For \cref{lemma:span-map-item-2}, the argument is similar: If $n = 0$ the claim is void. If $n = 1$ the domain and codomain are discrete posets, and the claim follows by direct inspection and the surjectivity of the poset map $\Span$ exactly as in the first case. If $n > 1$, we consider $V \in \bT^m_n(R)$, again set $t(V) \coloneqq \rank(V) - 1$, and observe that $\Span_{\leq V} = \bB(V)$ and  $\bT^m_n(R)_{> V} \cong \bT^m_{n -\rank(V)}(R)$ by \cref{upperlowerRelTits}. The assumptions of \cite[Proposition 4.1, Part 2]{kupersmillerpatzt2022} for proving that $\Span$ is $(n-1)$-connected again follow from the induction hypothesis and \cref{assumption:R}.
\end{proof}

Given a free summand $N$ of an $R$-module $M$, we denote by $\GL(M, N) \leq \GL(M)$ the subgroup that fixes $N$ pointwise. If $M = R^{m+n}$ and $N = R^n$, we write $\GL_n^m(R) = \GL(R^{m+n}, R^m)$. We note that $\GL_n(R)$ acts (from the right) on $T_n(R)$ simplicially, and similarly $\GL_n^m(R)$ acts on $T_n^m(R)$. As a consequence of \cref{lemma:span-map}, $T_n(R) \simeq \vee S^{n-2}$ and $T_n^m(R) \simeq \vee S^{n-1}$ for $m > 0$ and if $R$ satisfies \cref{assumption:R}. In particular, the complexes have a single (possibly) nontrivial reduced homology group in degree $(n-2)$ and $(n-1)$. This leads us to the following definition.

\begin{definition}
	\label{definition:relative-steinberg-modules}
	Let $R$ be a ring such that $T_n(R)$ is $(n-2)$-spherical. Then the \emph{Steinberg module} of $R$, denoted by $\St_n(R)$, is the right $\GL_n(R)$-module $\widetilde{H}_{n-2}(T_n(R); \Z)$. If $m>0$ and $T_n^m(R)$ is $(n-1)$-spherical, then the \emph{relative Steinberg module} $\St^m_n(R)$ is the right $\GL_n^m(R)$-module $\widetilde{H}_{n-1}(T_n^m(R);\Z)$.
\end{definition}
	\section{Generating sets for (relative) Steinberg modules}
	\label{sec:generating-sets-for-relative-steinberg-modules}
To obtain our stability result following \cite{kupersmillerpatzt2022}, we need to prove a vanishing result for certain co-invariants of the (relative) Steinberg modules introduced in \cref{definition:relative-steinberg-modules}.

For the non-relative Steinberg modules $\St_n(R)$, \cref{assumption:R} allows one to run an argument due to Church--Farb--Putman \cite{churchfarbputman2019} to obtain a generating set for $\St_n(R)$, which can be used to show that $(\St_n(R) \otimes \Z[1/2])_{\GL_{n}(R)} = 0$ if $n \geq 2$. Recently, Scalamandre \cite{scalamandre2023} carried out this argument in the generality that we require in this work. For Euclidean domains the construction of this generating set is due to Ash--Rudolph \cite[Theorem 4.1]{ashrudolph1979} and the vanishing result is due to Lee--Szczarba \cite[Theorem 1.3]{leeszczarba1976}. 

The main technical innovation of this article is to show that, if $m > 0$,  \cref{assumption:R} also suffices to construct a generating set for the relative Steinberg modules $\St_n^m(R)$, which can be used to check that $(\St_n^m(R) \otimes \Z[1/2])_{\GL_{n}^m(R)} = 0$ if $n \geq 1$. Compared to our \cref{assumption-partial-bases} in \cref{assumption:R}, the arguments in \cite[Theorem 3.15 and Lemma 4.6]{kupersmillerpatzt2022} require much more complicated connectivity assumptions, which to date are only known to hold for fields, the intergers \cite{churchputman2017}, the Gaussian and the Eisenstein integers \cite{kupersmillerpatztwilson2022}. In contrast, \cref{assumption:R} is known to hold for all local, semi-local or Artinian rings, Euclidean domains as well as certain non-euclidean Dedekind domains (compare \cref{section-1}). To prove our result, we introduce a modified partial basis complex, which is the key ingredient in a new induction procedure that we use to construct a generating set for $\St_n^m(R)$ with $m > 0$.

\subsection{Apartment classes and the Steinberg module}

Let \cref{assumption:R} hold and $\partial \Delta_{n-1}$ be the boundary of the standard $(n-1)$-simplex. Its poset of simplices can be identified with the poset $\bT(\llbracket n \rrbracket)$ of nonempty proper subsets of $\llbracket n \rrbracket \coloneqq \{1, \dots, n\}$. For any given matrix $M \in \GL_n(R)$ with column vectors $M = [\vec M_1 \dots \vec M_n]$, we obtain a poset embedding
\[
M: \bT(\llbracket n \rrbracket) \hookrightarrow \bT_n(R): S \mapsto \langle \vec M_i : i \in S \rangle.
\]
Fixing a generator $\eta_{n-2} \in \widetilde{H}_{n-2}(\bT(\llbracket n \rrbracket))$ once and for all, we obtain a unique class $[M] \coloneqq M_*(\eta_{n-2}) \in \widetilde{H}(\bT_n(R); \Z) = \St_n(R)$, which is called the \emph{apartment class} of $M \in \GL_n(R)$.

\begin{theorem}
	\label{theorem:apartment-classes-generate-steinberg}
	Let $R$ be a ring such that \cref{assumption:R} holds. Then the map
	\[
		\Z[\GL_{n}(R)] \to \St_n(R): M \mapsto [M]
	\]
	is a $\GL_n(R)$-equivariant surjection. In particular, $\St_n(R)$ is generated as an abelian group by the set apartment classes $\{[M]: M \in \GL_n(R)\}$.
\end{theorem}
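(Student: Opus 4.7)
The plan is to follow the strategy of Church--Farb--Putman, carried out in this generality by Scalamandre, by lifting the spanning map of \cref{lemma:span-map} to the level of top-dimensional simplices of the partial basis complex. Ordered bases of $R^n$ correspond bijectively to elements of $\GL_n(R)$ once a reference basis is fixed, and this correspondence will match boundaries of top simplices of $\bB_n(R)$ with apartment classes in $T_n(R)$.

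First I would verify $\GL_n(R)$-equivariance, which is a direct unpacking of definitions: for $g \in \GL_n(R)$, the columns of $Mg$ are $R$-linear combinations of the columns of $M$ determined by $g$, so the poset map $\bT(\llbracket n \rrbracket) \to \bT_n(R)$ induced by $Mg$ equals the poset map induced by $M$ post-composed with the action of $g$ on $\bT_n(R)$; passing to reduced homology gives $[Mg] = [M] \cdot g$.

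For surjectivity, I would combine two ingredients. By \cref{lemma:span-map-item-1}, the spanning map is $(n-2)$-connected, hence induces a surjection
\[
\Span_* \colon \widetilde H_{n-2}\bigl(\bB_n(R)^{(n-2)}; \Z\bigr) \twoheadrightarrow \widetilde H_{n-2}\bigl(T_n(R); \Z\bigr) = \St_n(R).
\]
It therefore suffices to show that $\widetilde H_{n-2}(\bB_n(R)^{(n-2)}; \Z)$ is generated by boundary classes $\partial[\sigma]$ as $\sigma$ runs over the $(n-1)$-simplices of $\bB_n(R)$, i.e.\ over the (unordered) bases of $R^n$. This is precisely what the long exact sequence of the pair $(\bB_n(R), \bB_n(R)^{(n-2)})$ gives: $\bB_n(R)$ is Cohen--Macaulay of dimension $n-1$ by \cref{assumption:R}, hence $(n-2)$-connected, so $\widetilde H_{n-2}(\bB_n(R); \Z) = 0$; the relative quotient chain complex is concentrated in degree $n-1$ and free abelian on the top simplices; therefore the connecting boundary map is surjective onto $\widetilde H_{n-2}(\bB_n(R)^{(n-2)})$.

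Finally I would identify the image $\Span_*(\partial [\sigma])$ with the apartment class $[M]$ of any matrix $M \in \GL_n(R)$ whose ordered columns are the vectors of $\sigma$: by construction, $\Span$ restricted to $\partial\sigma$ agrees with the poset map $\bT(\llbracket n \rrbracket) \to \bT_n(R)$ defining $[M]$, and both classes are obtained by applying the induced map to the chosen fundamental class $\eta_{n-2}$. Summing over bases (and noting that reordering columns of $M$ only changes $[M]$ by a sign), this shows that $\{[M] : M \in \GL_n(R)\}$ generates $\St_n(R)$. I do not expect any serious obstacle; the only bookkeeping required is matching signs between the simplicial boundary $\partial\sigma$ and the chosen fundamental class of $\partial\Delta_{n-1}$. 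All the real work has already been invested in \cref{lemma:span-map} and in \cref{assumption:R}; the present theorem is essentially a structured translation between those two connectivity inputs.
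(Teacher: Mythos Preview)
Your proposal is correct and is precisely the Church--Farb--Putman argument that the paper invokes by citing \cite[Theorem 5.1]{scalamandre2023}: the paper's own proof consists only of that citation together with the observation that \cref{assumption:R} and \cref{lemma:span-map} supply the required hypotheses, so you have simply written out what the cited argument does.
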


\begin{proof}
	This follows from an argument due to Church--Farb--Putman \cite[Proof of Theorem A]{churchfarbputman2019}. It was carried out by Scalamandre in the required generality in \cite[Theorem 5.1]{scalamandre2023}, which we can apply to prove the claim using \cref{assumption:R} and \cref{lemma:span-map}.
\end{proof}

\begin{corollary}
	\label{corollary:coinvariants-of-steinberg}
	Let $R$ be a ring such that \autoref{assumption:R} holds and let $\K$ be a commutative ring in which $2 \in \K^\times$ is a unit. If $n \geq 2$, then the $\GL_{n}(R)$-coinvariants of $\St(R) \otimes \K$ vanish. In symbols,
	\[
		(\St_n(R) \otimes \K)_{\GL_n(R)} = 0.
	\]
\end{corollary}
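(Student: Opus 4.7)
The plan is to combine the generation result of \cref{theorem:apartment-classes-generate-steinberg} with a classical sign argument. By that theorem, $\St_n(R)$ is generated as an abelian group by the apartment classes $\{[M] : M \in \GL_n(R)\}$, and therefore so are the coinvariants $(\St_n(R) \otimes \K)_{\GL_n(R)}$. It will consequently suffice to show that each $[M]$ vanishes in the coinvariants once $2$ has been inverted.

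To prove this, I would exploit a permutation symmetry. Fix $n \geq 2$ and let $\tau \in \GL_n(R)$ be a permutation matrix corresponding to the transposition of two standard basis vectors of $R^n$. Unpacking the definition of the apartment class, $[M \cdot \tau]$ is the image of $\eta_{n-2}$ under the composition $M_* \circ \tau_*$, where $\tau_*$ is the automorphism of $\widetilde{H}_{n-2}(\bT(\llbracket n \rrbracket))$ induced by the permutation action on $\llbracket n \rrbracket$. The geometric realization of $\bT(\llbracket n \rrbracket)$ is (the barycentric subdivision of) $\partial \Delta^{n-1} \simeq S^{n-2}$, on which a transposition of two vertices acts by $-1$ on the top homology. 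Hence $\tau_*(\eta_{n-2}) = -\eta_{n-2}$, so $[M \cdot \tau] = -[M]$ in $\St_n(R)$.

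Since the right action of $\tau$ is trivialized in the coinvariants, the classes $[M]$ and $[M \cdot \tau]$ become identified; combined with the previous identity, this yields $[M] = -[M]$, i.e., $2\,[M] = 0$ in $(\St_n(R))_{\GL_n(R)}$. Tensoring with $\K$ and using that $2 \in \K^\times$ then forces $[M] \otimes 1 = 0$, completing the argument. The only point of care is the sign computation in the previous paragraph, which must be executed consistently with the paper's right-action and matrix-multiplication conventions; this is a matter of careful bookkeeping rather than substantive difficulty, and has been implemented in essentially the generality needed here in the work of Scalamandre \cite{scalamandre2023}.
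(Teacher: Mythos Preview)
Your proposal is correct and follows essentially the same approach as the paper: both reduce to the generation statement of \cref{theorem:apartment-classes-generate-steinberg} and then apply the Church--Farb--Putman sign trick, using a transposition to show $2[M]=0$ in the coinvariants. The only cosmetic difference is that the paper uses the $M$-dependent automorphism $\phi$ swapping $\vec M_1$ and $\vec M_2$ and directly verifies $[M]\cdot\phi=-[M]$, whereas you use a fixed permutation matrix $\tau$ together with the equivariance of the apartment-class map to reach the same identity.
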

\begin{proof}
	We can apply a trick due to Church--Farb--Putman \cite[Proof of Theorem C]{churchfarbputman2019}: Let $n \geq 2$ and let $[M] \in \St_n(R)$ be an apartment class where $M = [\vec M_1 \dots \vec M_n]$. By virtue of \cref{theorem:apartment-classes-generate-steinberg}, we only need to show that $[M] \otimes 1 = 0$ in $(\St_n(R) \otimes \K)_{\GL_n(R)}$. To see this, consider the element $\phi \in \GL_n(R)$ that maps $\vec M_1 \mapsto \vec M_2$, $\vec M_2 \mapsto \vec M_1$ and $\vec M_i \mapsto \vec M_i$ for $i > 2$. The element $\phi$ acts by reversing the orientation of $[M]$, i.e.\ $[M] \cdot \phi = - [M]$, and therefore in the coinvariants it holds that $[M] \otimes 1 = - [M] \otimes 1$ and, since $2$ is a unit in $\K$ and $2 \cdot([M] \otimes 1) = 0$, we must have that $[M] \otimes 1 = 0$.
\end{proof}

\subsection{Relative apartment classes and relative Steinberg modules}

Now we define the set that will parametrize generators of the relative Steinberg module, as well as modified versions of this set that will be needed in the proof.

\begin{definition} 
	Let $m,n \geq 1$.
	\begin{enumerate}
		\item When $0\leq j\leq n$, we write $\relstgen^m_n(j)$ for the set of formal symbols 
		\[
			[\vec v_1, \vec v_1 + r_1 \vec e_{\beta(1)}] \ast \dots \ast [\vec v_j, \vec v_j + r_j \vec e_{\beta(j)}]
		\]
		obtained from a choice of ordered simplex $[\vec v_1, \dots, \vec v_j]$ of $\B^m_n(R)$, a choice of coefficients $r_i\in R - \{0\}$, and a choice of vectors $\vec e_{\beta(i)} \in \{\vec e_1, \dots, \vec e_m,\vec v_1,\dots, \vec v_{i-1}\}$. We use the conventions that	$\relstgen^m_n(0)$ contains a single element (thought of as an empty simplex in $\B^m_n(R)$), and when $j=n$, we write $\relstgen^m_n$ for $\relstgen^m_n(n)$.
		\item We define $\Z\{\relstgen^m_n\}$ to be the right $\GL^m_n(R)$-module with underlying $\Z$-module
		\[
			\Z\{\relstgen^m_n\} \coloneqq \bigoplus_{\relstgen^m_n} \Z,
		\]
		and whose $\GL^m_n(R)$-action on the basis elements $\relstgen^m_n$ is defined by
		\begin{multline*}
			([\vec v_1, \vec v_1 + \alpha_1 \vec e_{\beta(1)}] \ast \dots \ast [\vec v_n, \vec v_n + \alpha_n \vec e_{\beta(n)}]) \cdot \phi =\\
			[\phi(\vec v_1), \phi(\vec v_1) + \alpha_1 \phi(\vec e_{\beta(1)})] \ast \dots \ast [\phi(\vec v_n), \phi(\vec v_n) + \alpha_n \phi(\vec e_{\beta(n)})]
		\end{multline*}
		for $\phi \in \GL^m_n(R)$.
	\end{enumerate}
\end{definition}

Let $R$ be a ring such that \autoref{assumption:R} is satisfied and $m, n \geq 1$. We will prove that there exists a $\GL^m_n(R)$-equivariant surjection
$$F \colon \Z \{\relstgen^m_n\} \twoheadrightarrow \widetilde{H}_{n-1}(\bT^m_n;\Z) = \St_n^m(R).$$
This map is constructed as follows. Consider a symbol 
\[
	\Theta = [\vec v_1,\vec v_1+\alpha_1 \vec e_{\beta(1)}]\ast\dots\ast [\vec v_n,\vec v_n+\alpha_n \vec e_{\beta(n)}] \in \relstgen^m_n.
\]
Let $1_\Theta \in \Z$ denote $1$ in the summand of the domain of $F$ indexed by $\Theta$. Let 
\[
	S^{n-1} = S^0 \ast ... \ast S^0
\] 
be the simplicial $(n-1)$-sphere obtained as a join of $n$ copies of $S^0$, and denote its poset of simplices by $\mathbb{S}^{n-1}$. Each symbol $\Theta \in \relstgen^m_n$ gives rise to a poset embedding
\[
	\Theta \colon \mathbb{S}^{n-1} \hookrightarrow \bT^m_n(R)
\]
by mapping the two vertices of the i-th copy of $S^0$ in the join $S^{n-1} = S^0 \ast ... \ast S^0$ to $\{\langle\vec v_i \rangle, \langle \vec v_i+\alpha_i \vec e_{\beta(i)} \rangle\}$. Passing to homology we obtain a map
\[
	\Theta_* \colon \widetilde{H}_{n-1}(\mathbb{S}^{n-1}; \Z) \to \widetilde{H}_{n-1}(\bT^m_n;\Z) = \St_n^m(R).
\]
Fix a generator $\eta_{-1} \in \widetilde{H}_{-1}(\mathbb{S}^{-1}; \Z) = \Z$. This choice determines generators $\eta_{n-1} \in \widetilde{H}_{n-1}(\mathbb{S}^{n-1}; \Z)$ for all $n \in \N$ using the suspension isomorphisms coming from the identifications $\mathbb{S}^0 \ast \mathbb{S}^{n-2} \cong \mathbb{S}^{n-1}$. The map $F$ is then defined by the formula
\[
	F(1_\Theta) = \Theta_*(\eta_{n-1}) \eqqcolon [\Theta].
\]
The class $[\Theta] \in \widetilde{H}_{n-1}(\bT_n^m(R); \Z) = \St^m_n(R)$ is called the \emph{relative apartment class} associated to the symbol $\Theta \in \relstgen^m_n$. Our main technical result is the following theorem. 

\begin{theorem}\label{theorem:generators}
	Let $R$ be a ring such that \autoref{assumption:R} holds and $m, n \geq 1$. Then
	$$F \colon \Z \{\relstgen^m_n\} \twoheadrightarrow \St_n^m(R): \Theta \mapsto [\Theta]$$
	is a $\GL^m_n(R)$-equivariant surjection. In particular, the set of relative apartment classes $\{[\Theta] : \Theta \in \relstgen^m_n\}$ generates $\St^m_n(R)$ as an abelian group.
\end{theorem}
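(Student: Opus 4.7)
The plan is to prove surjectivity of $F$ by induction on $n \geq 1$, using the spanning map $\Span \colon \bB_n^m(R) \to \bT_n^m(R)$ of \cref{lemma:span-map} as the main geometric tool. Since $\Span$ is $(n-1)$-connected and $\bB_n^m(R)$ is Cohen--Macaulay of dimension $n-1$ by \cref{assumption:R} and \cref{corollary:assumption-consequence-2}, it induces a surjection $H_{n-1}(\bB_n^m(R); \Z) \twoheadrightarrow \widetilde{H}_{n-1}(T_n^m(R); \Z) = \St_n^m(R)$. So the task reduces to showing that the image under $\Span$ of any top cycle in $\bB_n^m(R)$ can be expressed as a sum of relative apartment classes $[\Theta]$.

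For the base case $n = 1$, the complex $T_1^m(R)$ is discrete and $\St_1^m(R) \cong \widetilde{H}_0(T_1^m(R))$ is generated by differences $[\langle \vec w \rangle] - [\langle \vec w' \rangle]$ of vertices. Each symbol $[\vec v_1, \vec v_1 + r_1 \vec e_{\beta(1)}] \in \relstgen^m_1$ realizes exactly such a difference. I would finish by checking that any two vertices of $T_1^m(R)$ can be joined by a chain of such elementary differences. After using $\GL_1^m(R)$-equivariance to reduce to vertices of the form $\langle u \vec e_{m+1} + \sum a_j \vec e_j \rangle$ with $u \in R^\times$, the difference from $[\langle \vec e_{m+1} \rangle]$ telescopes through the sum $\sum a_j \vec e_j$, one term at a time, as a sum of elementary differences in $\relstgen^m_1$.

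For the inductive step, assume the result for $n - 1$ and all $m' \geq 1$. The key observation is that for every vertex $\vec v$ of $\bB_n^m(R)$, there is a natural identification $\Link_{\bB_n^m(R)}(\vec v) \cong \bB_{n-1}^{m+1}(R)$ obtained by enlarging the fixed summand from $R^m$ to $R^m \oplus \langle \vec v \rangle$. I would fix a top $(n-1)$-cycle $\zeta \in H_{n-1}(\bB_n^m(R); \Z)$ and use the Cohen--Macaulay structure of $\bB_n^m(R)$ to decompose $\zeta$ into pieces supported at stars of individual vertices $\vec v$; each such piece takes the form $\vec v \cdot \zeta_{\vec v}$, where $\zeta_{\vec v}$ is a top cycle in $\Link(\vec v) \cong \bB_{n-1}^{m+1}(R)$. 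By the inductive hypothesis, $\Span(\zeta_{\vec v})$ is a sum of relative apartment classes in $\relstgen^{m+1}_{n-1}$, and prepending a factor $[\vec v, \vec v + r \vec e_{\beta(1)}]$ with $\vec e_{\beta(1)} \in \{\vec e_1, \dots, \vec e_m\}$ lifts each such class back to a symbol in $\relstgen^m_n$. This lift is consistent precisely because the allowance $\vec e_{\beta(i)} \in \{\vec e_1, \dots, \vec e_m, \vec v_1, \dots, \vec v_{i-1}\}$ in the definition of $\relstgen^m_n$ ensures that, after prepending $\vec v = \vec v_1$, the newly available perturbation target $\vec v_1$ plays exactly the role of an extra basis vector $\vec e_{m+1}$ in the inductive symbol under the identification $\Link(\vec v) \cong \bB_{n-1}^{m+1}(R)$.

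The main obstacle I anticipate is making the local decomposition of $\zeta$ into star-supported pieces rigorous and verifying that the inductive lift reassembles to a global sum of relative apartment classes matching $\Span(\zeta)$. This is where the modified partial basis complex alluded to in the introduction enters: its simplices encode a partial basis together with the perturbation data $r_i, \vec e_{\beta(i)}$, so that top cycles in this enriched complex lift directly to sums of symbols $\Theta$ under a natural forgetful map to $\bB_n^m(R)$, making the induction on $n$ run cleanly. The $\GL_n^m(R)$-equivariance of $F$ is preserved at each step by naturality, so the resulting surjection is automatically $\GL_n^m(R)$-equivariant.
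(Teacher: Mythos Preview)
Your high-level plan—use the connectivity of $\Span$ to reduce to top cycles in $\bB_n^m(R)$, then peel off one vertex at a time via $\Link_{\bB_n^m(R)}(\vec v)\cong \bB_{n-1}^{m+1}(R)$—is in the right spirit, but the inductive step contains a genuine gap.

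The problem is the ``prepending'' move. Suppose you have written $\zeta=\sum_{\vec v}\vec v\ast\zeta_{\vec v}$ (already a nontrivial decomposition, since the individual pieces $\vec v\ast\zeta_{\vec v}$ are not cycles). By induction you express the class of $\zeta_{\vec v}$ in $\St_{n-1}^{m+1}(R)$ as $\sum_k[\Theta'_k]$, and then you form $\Theta_k=[\vec v,\vec v+r\vec e_\beta]\ast\Theta'_k\in\relstgen^m_n$. But there is no reason that $\sum_{\vec v}\sum_k[\Theta_k]$ equals $\Span_*(\zeta)$ in $\St_n^m(R)$. The relative apartment class $[\Theta_k]$ is built from a genuine $S^0$ factor $\{\langle\vec v\rangle,\langle\vec v+r\vec e_\beta\rangle\}$, whereas your piece $\vec v\ast\zeta_{\vec v}$ only sees the single vertex $\langle\vec v\rangle$ (and the summands containing it). The choices of $r$ and $\beta$ are free parameters that do not appear anywhere in $\zeta$, so the prepended symbols cannot simply ``reassemble'' to $\Span_*(\zeta)$ without a further argument explaining why the unwanted terms involving $\langle\vec v+r\vec e_\beta\rangle$ cancel. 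This is exactly the hard part of the theorem, and your proposal defers it to an unspecified use of the modified complex.

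The paper's argument resolves this by inducting on the length $j$ of the symbol (with $n$ fixed) rather than on $n$. For each $\Theta\in\relstgen_n^m(j)$ there is a $\Theta$-dependent span map $\Span_\Theta\colon \bS^{j-1}\ast\bLink_{\B_n^m}(\sigma(\Theta))\to\bT_n^m(R)$, and one proves that the induced maps in degree $n-1$ are jointly surjective for every $j$. Two ingredients make the step $j\to j+1$ work. First, \cref{lemma:BvsT} shows that $\Span_\Theta$ is nullhomotopic on $\bS^{j-1}\ast\bB(C_{\sigma(\Theta)})$ for any complement $C_{\sigma(\Theta)}$, so the map on homology factors through the relative group $\widetilde H_{n-1-j}(\Link_{\B_n^m}(\sigma(\Theta)),\B(C_{\sigma(\Theta)}))$. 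Second, \cref{lemma:BXvsB} (the nontrivial connectivity statement for the externally augmented complex $\BX$) forces the connecting map
\[
\widetilde H_{n-j}\bigl(\BX^{m,\sigma(\Theta)}_n(R),\Link_{\B_n^m}(\sigma(\Theta))\bigr)\twoheadrightarrow \widetilde H_{n-1-j}\bigl(\Link_{\B_n^m}(\sigma(\Theta)),\B(C_{\sigma(\Theta)})\bigr)
\]
to be surjective. The left-hand side then decomposes as a direct sum over externally additive edges $\{\vec v,\vec v+r\vec e_\beta\}$, and a diagram chase identifies the resulting composite with $F_{j+1}$. In other words, the perturbation factor $[\vec v,\vec v+r\vec e_\beta]$ is not introduced by hand; it arises from the excision-type splitting of the relative homology of $(\BX,\Link)$, and \cref{lemma:BXvsB} is precisely what guarantees no information is lost in the process. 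Your proposal is missing both the nullhomotopy step (which is what allows one to discard the ``wrong'' half of the suspension) and the relative connectivity result that makes the externally additive edges account for everything.
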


This generating set allows us to obtain the following vanishing result, which is the key input for the homological stability theorems proved in the next section.

\begin{corollary}\label{corollary:coinvariants-of-relative-steinberg}
	Let $R$ be a ring such that \autoref{assumption:R} holds, and let $\K$ be a commutative ring in which $2 \in \K^\times$ is a unit.
	If $n,m \geq 1$, then the $\GL_n^m(R)$-coinvariants of $\St_n^m(R) \otimes \mathbb{k}$ vanish. In symbols,
	$$(\St_n^m(R) \otimes \mathbb{k})_{\GL_n^m(R)} = 0 \text{ if } m, n \geq 1. $$
\end{corollary}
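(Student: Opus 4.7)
The plan is to mimic the argument used for \cref{corollary:coinvariants-of-steinberg}: by \cref{theorem:generators} the relative apartment classes $[\Theta]$ generate $\St_n^m(R)$, so it suffices to exhibit, for every symbol $\Theta \in \relstgen_n^m$, an involution $\phi \in \GL_n^m(R)$ acting on $[\Theta]$ by the sign $-1$. Since $2 \in \mathbb{k}^\times$, the identities $[\Theta] \otimes 1 = ([\Theta] \cdot \phi) \otimes 1 = -[\Theta] \otimes 1$ in the coinvariants then force $2 \cdot ([\Theta] \otimes 1) = 0$, and hence $[\Theta] \otimes 1 = 0$.

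Given $\Theta = [\vec v_1, \vec v_1 + \alpha_1 \vec e_{\beta(1)}] \ast \dots \ast [\vec v_n, \vec v_n + \alpha_n \vec e_{\beta(n)}]$, I would first note that $\{\vec e_1, \dots, \vec e_m, \vec v_1, \dots, \vec v_n\}$ is a partial basis of $R^{m+n}$ of size $m+n$ and hence a basis by \cref{corollary:assumption-consequence-1}. I would then define $\phi \in \End_R(R^{m+n})$ by the rule that it fixes $\vec e_1, \dots, \vec e_m$ and $\vec v_1, \dots, \vec v_{n-1}$ and sends $\vec v_n \mapsto -\vec v_n - \alpha_n \vec e_{\beta(n)}$. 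Using $\beta(n) \in \{1,\dots,m, v_1,\dots,v_{n-1}\}$, so that $\phi(\vec e_{\beta(n)}) = \vec e_{\beta(n)}$, a short computation gives $\phi \circ \phi = \id$; equivalently, the matrix of $\phi$ in the ordered basis $(\vec e_1, \dots, \vec e_m, \vec v_1, \dots, \vec v_n)$ is upper-triangular with diagonal $(1, \dots, 1, -1)$. In particular $\phi$ is an involution in $\GL_n^m(R)$.

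The sign computation then reduces to the following observation. Since $\beta(i) \in \{1,\dots,m,v_1,\dots,v_{i-1}\}$ is fixed by $\phi$ for every $i \leq n$, the composition $\phi \circ \Theta \colon \mathbb{S}^{n-1} \hookrightarrow \bT_n^m(R)$ agrees with $\Theta$ vertex-by-vertex on the first $n-1$ copies of $\mathbb{S}^0$ in the join, while on the final copy $\phi$ interchanges the two rank-$1$ summands $\langle \vec v_n \rangle$ and $\langle \vec v_n + \alpha_n \vec e_{\beta(n)}\rangle$, because $\phi(\vec v_n) = -(\vec v_n + \alpha_n \vec e_{\beta(n)})$ and $\phi(\vec v_n + \alpha_n \vec e_{\beta(n)}) = -\vec v_n$. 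In other words, $\phi \circ \Theta = \Theta \circ \tau$, where $\tau \colon \mathbb{S}^{n-1} \to \mathbb{S}^{n-1}$ swaps the two vertices of the last $\mathbb{S}^0$-factor. Transposing the vertices of one $\mathbb{S}^0$-factor reverses the orientation of the join, so $\tau_*(\eta_{n-1}) = -\eta_{n-1}$ and hence $[\Theta] \cdot \phi = (\phi \circ \Theta)_*(\eta_{n-1}) = \Theta_*(-\eta_{n-1}) = -[\Theta]$, as desired.

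The only delicate point is the orientation bookkeeping, but this is the classical fact that a single transposition of factors in the iterated join $\mathbb{S}^0 \ast \dots \ast \mathbb{S}^0$ acts as $-1$ on top reduced homology; no further obstacles are anticipated.
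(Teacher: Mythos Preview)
Your argument is correct and essentially identical to the paper's own proof: both use \cref{theorem:generators} to reduce to generators, define the very same element $\phi \in \GL_n^m(R)$ (fixing $\vec e_1,\dots,\vec e_m,\vec v_1,\dots,\vec v_{n-1}$ and sending $\vec v_n \mapsto -\vec v_n - \alpha_n \vec e_{\beta(n)}$), and observe that $\phi$ swaps the two rank-$1$ summands in the last $\bS^0$-factor, so $[\Theta]\cdot\phi = -[\Theta]$ and hence $[\Theta]\otimes 1 = 0$ once $2$ is inverted. The only cosmetic difference is that you phrase the sign computation via the transposition $\tau$ on $\bS^{n-1}$, whereas the paper writes out the transformed symbol $\Theta\cdot\phi$ explicitly.
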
 
	
	\begin{proof}[Proof of \autoref{corollary:coinvariants-of-relative-steinberg}]
		This argument is inspired by a trick used in \cite[Theorem 3.15, Case 2, and Proposition 3.16.]{kupersmillerpatzt2022}. By \cref{theorem:generators} it suffices to show every generator $[\Theta] \otimes 1 \in (\St^m_n(R) \otimes \K)_{\GL_n^m(R)}$ is equal to zero if 2 is a unit in $\K$. Let
		\[
			\Theta = [\vec v_1,\vec v_1+\alpha_1 \vec e_{\beta(1)}]\ast\dots\ast [\vec v_n,\vec v_n+\alpha_n \vec e_{\beta(n)}] \in \relstgen^m_n.
		\]
		Then $\{\vec e_1, \dots, \vec e_m, \vec v_1, \dots, \vec v_n\}$ is a basis of $R^{m+n}$ and $\vec e_{\beta(n)} \in \{\vec e_1, \dots, \vec e_m, \vec v_1, \dots, \vec v_{n-1}\}$.  Consider the element $\phi$ of $\GL_n^m(R)$ that sends $\vec v_n$ to $-\vec v_n -\alpha_n \vec e_{\beta(n)}$ and is the identity on $\vec e_i$ and $\vec v_j$ when $j \not= n$.
		It follows that
		\[
			\Theta \cdot \phi =  [\vec v_1,\vec v_1+\alpha_1 \vec e_{\beta(1)}] \ast\dots\ast [\vec v_{n-1},\vec v_{n-1}+\alpha_{n-1} \vec e_{\beta(n-1)}] \ast [- (\vec v_n + \alpha_n \vec e_{\beta(n)}), - \vec v_n].
		\]
		Observe that this implies $[\Theta] = - [\Theta \cdot \phi] = - [\Theta] \cdot \phi \in \St_n^m(R)$.		Therefore, in the coinvariants it holds that
		\[
			[\Theta] \otimes 1 = - ([\Theta] \cdot \phi \otimes 1) = - ([\Theta] \cdot \phi \otimes 1 \cdot \phi) = - ([\Theta] \otimes 1)
		\]
		and it follows that $2 \cdot ([\Theta] \otimes k) = 0$ in $(\St^m_n(R) \otimes \K)_{\GL_n^m(R)}$. Since 2 is a unit in $\K$, this implies that	$[\Theta] \otimes 1 = 0$ in $(\St^m_n(R) \otimes \K)_{\GL_n^m(R)}$ as claimed. \qedhere		
	\end{proof}
	
	Before proving \cref{theorem:generators}, we will introduce and study the connectivity properties of several simplicial complexes and posets.
	
	\subsubsection{Complex of externally augmented partial bases}
	
	The following new complex will play an important role in our arguments.
	
	\begin{definition}
		Fix a ring $R$ and an integer $m \geq 1$, and let $\gamma$ be either a simplex in
		$\B^m_n(R)$ or the empty set. Let $\BX^{m, \gamma}_n(R)$ denote the simplicial
		complex whose vertices are vectors $\vec v$ in $R^{m+n}$ with the
		property that $\{\vec e_1, \dots, \vec e_m\} \sqcup \gamma \sqcup \{\vec v\}$ is
		a partial basis of $R^{m+n}$ (i.e.\ a simplex in $B_{m+n}(R)$). The simplicial complex $\BX^{m, \gamma}_n(R)$ has two different types of simplices:
		\begin{enumerate}
			\item \emph{standard} k-simplices $\sigma = \{\vec v_1, \dots, \vec v_{k+1}\}$
			defined by the property that $\{\vec e_1, \dots, \vec e_m\} \sqcup \gamma \sqcup
			\{ \vec v_1, \dots, \vec v_{k+1}\}$ is a partial basis of size
			$m+|\gamma|+(k+1)$ of $R^{m+n}$;
			\item \label{item-externally-additive} \emph{externally additive} k-simplices $\sigma = \{\vec v_1 = \vec
			v_2 + r \vec e_{\beta(2)}, \vec v_2, \dots, \vec v_{k+1}\}$ defined by the
			property that $\{\vec e_1, \dots, \vec e_m\} \sqcup \gamma \sqcup \{\vec v_2,
			\dots, \vec v_{k+1}\}$ is a partial basis of size $m+|\gamma|+k$ of $R^{m+n}$
			and that $\vec v_1$ is a sum of the form $\vec v_2 + r \vec e_{\beta(2)}$ for
			$e_{\beta(2)} \in \{\vec e_1, \dots, \vec e_m\} \sqcup \gamma$ and $r \in
			R\setminus \{0\}$.
		\end{enumerate}
		We will refer to the simplicial complex $\BX^{m, \gamma}_n(R)$ as the
		\emph{complex of externally augmented partial bases}. If $\gamma =
		\emptyset$, we write $\BX^{m}_n(R)$ for $\BX^{m, \gamma}_n(R)$.
	\end{definition}

	We highlight that $\BX^{m, \gamma}_n(R)$ is exactly $\Link_{\B_n^m(R)}(\gamma)$ with extra externally additive simplices, as in \cref{item-externally-additive}, glued on. In particular, $\Link_{\B_n^m(R)}(\gamma)$ is a subcomplex of $\BX^{m, \gamma}_n(R)$ and, if $\gamma= \emptyset$, $\B_n^m(R)$ is a subcomplex of $\BX^{m}_n(R)$.
	
	\begin{remark}
		The definition of the simplicial complexes $\BX^{m, \gamma}_n(R)$ is inspired by work of Church--Putman \cite{churchputman2017}. In their work on the rational codimension-one cohomology of $\SL_n(\Z)$, a closely related simplicial complex $\BA^m_n(\Z)$ of augmented partial frames plays a key role. The complex $\BA^m_n(\Z)$ is constructed using externally additive augmentations for which the $e_i$-coefficient satisfies $r = 1$, as well as internally additive augmentations, which do not occur in our setting. Similar complexes of augmented frames, $\BA^m_n(\Z[i])$ and $\BA^m_n(\Z[\rho])$, for the Gaussian integers and the Eisenstein integers have been studied in Kupers--Miller--Patzt--Wilson's work \cite{kupersmillerpatztwilson2022} on the rational codimension-one cohomology of $\SL_n(\Z[i])$ and $\SL_n(\Z[\rho])$. These three spherical complexes of augmented partial frames, $\BA^m_n(\Z)$, $\BA^m_n(\Z[i])$, and $\BA^m_n(\Z[\rho])$, have also been used by Kupers--Miller--Patzt \cite{kupersmillerpatzt2022} to construct generating sets for the relative Steinberg modules $\St^m_n(\Z)$, $\St^m_n(\Z[i])$ and
		$\St^m_n(\Z[\rho])$. The complexes $\BX^{m, \gamma}_n(R)$ of externally augmented partial bases occurring in this work are, in general, not spherical (see \cref{NotSpherical}). One way to think about the complexes
		$\BX^{m, \gamma}_n(R)$ is that they are similar to the complexes $\BA_n^m(R)$ except they do not have simplicies corresponding to the Bykovski\u{\i}-relations in Steinberg modules \cite{bykovskii2003}. These simplices turn out not to be relevant for slope-$1$ homological stability, and ignoring them allows us to prove our results for rings where the Bykovski\u{\i}-relations do not hold. 
	\end{remark}	

\subsubsection{Key observation}
	The next lemma will be key for establishing a relation between the span map $\Span\colon \bB^m_n(R) \to \bT^m_n(R)$ in \cref{definition:span-maps} and the relative apartment class map $F \colon \Z \{\relstgen^m_n\} \twoheadrightarrow \St^m_n(R)$ occurring in \cref{theorem:generators}. It studies a collection of $\Theta$-dependent span maps for each symbol $\Theta \in \relstgen^m_n(j)$. For $j = 0$ and $\Theta \in \relstgen^m_n(0)$ the empty symbol, the associated collection contains exactly one spanning map, which is $\Span\colon \bB^m_n(R) \to \bT^m_n(R)$.
	
	First we must construct these maps. Suppose \cref{assumption:R} holds and $m, n \geq 1$. Let $0 \leq j < n$ and consider a symbol
	$$\Theta = [\vec v_1,\vec v_1+\alpha_1 \vec e_{\beta(1)}]\ast\dots\ast [\vec v_j,\vec v_j+\alpha_j \vec e_{\beta(j)}] \in \relstgen^m_n(j).$$
	Associated with $\Theta$, we find a simplex $\sigma = \sigma(\Theta) = \{\vec v_1, \dots, \vec v_j\} \in \B^m_n(R)$.
	By restricting the spanning map
	$$\Span \colon \bB_{n}^{m}(R)\to \bT_n^m(R)$$
	to the simplex poset of $\Link_{\B^m_n}(\sigma)$, we obtain a map
	$$\bLink_{\B^m_n}(\sigma)\to \bT_n^m(R).$$
	Let $S^{j-1} = S^0 \ast \dots \ast S^0$ be a join of j copies of $S^0$. Using
	the symbol $\Theta$ we can define an embedding
	$$\Theta\colon \bS^{j-1} \hookrightarrow \bT_n^m(R),$$
	whose value on the two vertices of the i-th $S^0$ in $S^{j-1}$ is given by
	$\{ \langle \vec v_i \rangle, \langle \vec v_i+\alpha_i \vec e_{\beta(i)} \rangle \}$. The two maps are compatible
	and yield the \emph{$\Theta$-dependent span map}
	$$\Span_{\Theta} \colon \bS^{j-1} \ast \bLink_{\B^m_n}(\sigma) \to \bT_n^m(R).$$
	Consider any decomposition $R^{m + n} = R^m \oplus \langle \sigma \rangle \oplus C_\sigma$. Then $C_\sigma$ is free by \cref{lemma:complements-are-free}. Viewing $\B(C_\sigma)$ as a subcomplex of $\Link_{\B^m_n}(\sigma)$ via the inclusion
	$$C_\sigma \hookrightarrow R^{m+n}=R^m \oplus \langle \sigma \rangle \oplus C_\sigma$$
	yields a map
	$$\incl_{(\theta, C_\sigma)} \colon \bS^{j-1} \ast \bB(C_\sigma) \to \bS^{j-1} \ast \bLink_{\B^m_n}(\sigma).$$
	The first key observation is that the composition of this map with $\Span_{\Theta}$ is nullhomotopic.
	
	\begin{lemma}\label{lemma:BvsT} 
		If \cref{assumption:R} holds, $m,n \geq 1$ and $0\leq j<n$, then the composition
		$$\Span_{\Theta} \circ \incl_{(\theta, C_\sigma)} \colon \bS^{j-1} \ast \bB(C_\sigma) \to \bS^{j-1} \ast \bLink_{\B^m_n}(\sigma) \to \bT_n^m(R)$$
		is nullhomotopic for every choice of $C_\sigma$.
	\end{lemma}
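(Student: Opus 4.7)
The plan is to factor the composition $\Span_{\Theta} \circ \incl_{(\theta, C_\sigma)}$ through its image subposet $P' \subseteq \bT^m_n(R)$, defined as
\[ P' = \bigl\{\Theta(\tau) + \Span(\delta) : \tau \cup \delta \in \bS^{j-1} \ast \bB(C_\sigma) \bigr\} \]
(allowing $\tau$ or $\delta$ but not both to be empty), and then to argue that $|P'|$ is contractible. First I would verify that each such summand actually lies in $\bT^m_n(R)$: modifying each $\vec v_i$ to $\vec v_i^{sign(i)}$ for $i \in \operatorname{supp}(\tau)$ is an elementary basis change, so $\{\vec e_1, \ldots, \vec e_m\} \cup \{\vec v_i^{sign(i)} : i \in \operatorname{supp}(\tau)\} \cup \{\vec v_i : i \notin \operatorname{supp}(\tau)\}$ remains a partial basis of $R^{m+n}$; extending $\delta$ to a basis of $C_\sigma$ (possible by \cref{assumption:R} and \cref{corollary:assumption-consequence-2}) and adjoining it yields a basis of $R^{m+n}$ containing $\{\vec v_i^{sign(i)} : i \in \operatorname{supp}(\tau)\} \cup \delta$. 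Hence $\Theta(\tau) + \Span(\delta)$ is a free summand of a complement of $R^m$, i.e.\ an element of $\bT^m_n(R)$.

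The core step is to construct a deformation retraction $r \colon P' \to P'$ by $r(V) = V + C_\sigma$. Since $\Span(\delta) \subseteq C_\sigma$, we have $r(\Theta(\tau) + \Span(\delta)) = \Theta(\tau) + C_\sigma = \Theta(\tau) + \Span(\delta_{\max})$ for any basis $\delta_{\max}$ of $C_\sigma$ (a top-dimensional simplex of $\bB(C_\sigma)$, which exists because $\bB(C_\sigma)$ is pure of the top dimension by \cref{corollary:assumption-consequence-2}). So $r$ is well-defined and maps $P'$ to itself, and it is monotone. Since $V \subseteq V + C_\sigma$ for every $V \in P'$, we have $\id_{P'} \leq r$; the standard principle that poset maps $f \leq g$ induce simplicially homotopic maps on order complexes then yields $|\id_{P'}| \simeq |r|$, so $|P'| \simeq |r(P')|$.

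To finish, I would identify $r(P')$ with the poset $\bS^{j-1} \cup \{\emptyset\}$ (that is, $\bS^{j-1}$ augmented by a minimum $\emptyset$) via $\tau \mapsto \Theta(\tau) + C_\sigma$ and $\emptyset \mapsto C_\sigma$. This is a poset isomorphism: injectivity follows from $\Theta(\tau) \cap C_\sigma = 0$, which holds because $\Theta(\tau) \subseteq R^m \oplus \langle \sigma \rangle$ in the fixed decomposition $R^{m+n} = R^m \oplus \langle \sigma \rangle \oplus C_\sigma$, combined with the fact that $\Theta$ is itself a poset embedding. The order complex of $\bS^{j-1} \cup \{\emptyset\}$ is the simplicial cone on the order complex of $\bS^{j-1}$, i.e.\ the cone on $S^{j-1}$, which is homeomorphic to $D^j$ and contractible. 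Hence $|P'|$ is contractible and the composition, which factors through $P' \hookrightarrow \bT^m_n(R)$, is nullhomotopic. I expect the main subtlety to be confirming that $V + C_\sigma$ remains a free summand of $R^{m+n}$ (rather than merely a submodule); this works for $V \in P'$ because $V$ already arises from a partial basis of a complement of $R^m$, and adjoining $C_\sigma$ simply yields the span of a larger partial basis.
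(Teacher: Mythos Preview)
Your argument is correct and is essentially the paper's proof unpacked: the paper simply observes that the image of the composition lands in the subspace $\Theta(\bS^{j-1}) \ast \bT^m_n(R)_{\leq C_\sigma}$, and since $\bT^m_n(R)_{\leq C_\sigma}$ has $C_\sigma$ as a maximum this join is contractible. Your retraction $r(V)=V+C_\sigma$ and the subsequent identification of $r(P')$ with a cone are an explicit realization of exactly this cone-point argument, so the extra bookkeeping (verifying $V+C_\sigma \in P'$, identifying $r(P')$ with $\bS^{j-1}\cup\{\emptyset\}$) can be replaced by the one-line observation that the second join factor already has a cone point.
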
 
	
	\begin{proof}
		Recall that $C_\sigma \neq 0$ is a nonzero free summand of the complement $\langle \sigma \rangle \oplus C_\sigma$ of $R^m$ by \cref{lemma:complements-are-free} and therefore defines an element
		$$C_\sigma \in \bT^m_n(R).$$
		Since we view $\bB(C_\sigma)$ as a subposet of $\bLink_{\B^m_n}(\sigma)$ via the inclusion $C_\sigma \hookrightarrow R^{m+n}=R^m \oplus \langle \sigma \rangle \oplus C_\sigma$, it follows that for any partial basis $\tau \in
		\bB(C_\sigma)$ it holds that 
		$$\langle \tau \rangle \subseteq C_\sigma \text{ in } \bT^m_n(R).$$
		Hence, the image of $\Span_\Theta \circ \incl_{(\Theta, C_\sigma)}$ is contained in subspace $\Theta(\bS^{j-1}) \ast \bT^m_n(R)_{\leq C_\sigma},$ where $\bT^m_n(R)_{\leq C_\sigma}$ is the subposet of elements that are bounded above by $C_\sigma$. Since $\bT^m_n(R)_{\leq C_\sigma}$ has a cone point $C_\sigma$, it follows that the image of $\Span_\Theta \circ \incl_{(\Theta, C_\sigma)}$ is contained in a contractible subspace. Therefore, $\Span_\Theta \circ \incl_{(\Theta, C_\sigma)}$ is nullhomotopic.
	\end{proof}

\subsubsection{Relative connectivity results}
	
	In the next two steps, we investigate the connectivity of the complex $\BX^m_n(R)$, first relative to
	$\B^m_n(R)$ and then relative to $\B_n(R)$.
	
	\begin{lemma}\label{lemma:BXvsBrel}
		Suppose \autoref{assumption:R} holds. If $m \geq 1$, then $\pi_k(\BX^m_n(R), \B^m_n(R))$ and $H_k(\BX^m_n(R), \B^m_n(R))$ are trivial for all $k < n$.
	\end{lemma}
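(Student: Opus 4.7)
The plan is to prove both statements by a bad simplex deformation argument, in the spirit of \cite{churchputman2017, kupersmillerpatzt2022}, simplified by the fact that externally additive simplices are the only simplices of $\BX^m_n(R)$ not already in $\B^m_n(R)$. First I would establish the following link identification: for any externally additive $j$-simplex $\tau = \{\vec v_1, \vec v_2, \ldots, \vec v_{j+1}\}$ with $\vec v_1 = \vec v_2 + r\vec e_\beta$, one has
$$\Link_{\BX^m_n(R)}(\tau) = \Link_{\B^m_n(R)}(\sigma), \qquad \sigma = \{\vec v_2, \ldots, \vec v_{j+1}\}.$$
The point is that modulo $R\langle \vec e_1, \ldots, \vec e_m\rangle$, the pair $\{\vec v_1, \vec v_2\}$ is the unique coincident pair among the vertices of $\tau$. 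Any vertex $\vec w$ extending $\tau$ to a simplex of $\BX^m_n(R)$ hence cannot yield a standard simplex (as $\vec v_1, \vec v_2$ would be linearly dependent) and cannot create a second externally additive pair, so $\vec w$ must extend $\sigma$ to a partial basis. By the Cohen--Macaulayness of $B_{m+n}(R)$ from \cref{assumption:R}, this link is $(n-j-1)$-spherical, hence $(n-j-2)$-connected.

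For the $\pi_k$ statement, I would run a bad simplex removal. Given a relative class in $\pi_k(\BX^m_n(R), \B^m_n(R))$ for $k < n$, represent it by a simplicial map $f \colon (D^k, \partial D^k) \to (\BX^m_n(R), \B^m_n(R))$, so that all externally additive simplices in the image of $f$ are hit in the interior of $D^k$. Let $\tau$ be an externally additive simplex of maximal dimension $j$ in the image and $\eta \subset D^k$ a $j$-simplex with $f(\eta) = \tau$. The link $\Link(\eta, D^k)$ is a sphere of dimension $k - j - 1 \leq n - j - 2$, and $f$ restricts to a map from it into $\Link_{\BX^m_n(R)}(\tau)$. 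By the connectivity bound from the first step, this restriction null-homotopes inside $\Link_{\BX^m_n(R)}(\tau) \subset \B^m_n(R)$, and replacing $f$ on the star of $\eta$ by the cone on this null-homotopy eliminates this preimage of $\tau$ without introducing new externally additive simplices. Iterating over all preimages and all bad simplices in decreasing dimension yields a homotopy of $f$ rel $\partial D^k$ into $\B^m_n(R)$.

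For the homology statement I would invoke the relative Hurewicz theorem, using that $\B^m_n(R)$ is simply connected for $n \geq 3$ by its $(n-2)$-connectivity (small $n$ follow from direct inspection or from an analogous chain-level bad simplex removal on simplicial representatives of relative cycles). The main obstacle is the link identification in the first step: it relies crucially on $\BX^m_n(R)$ admitting only externally additive augmentations with respect to $\vec e_1, \ldots, \vec e_m$, as opposed to the more elaborate $\BA^m_n$-type complexes of \cite{churchputman2017, kupersmillerpatztwilson2022} where additional internal additions would complicate the analysis. Once the link identification is in place, the remainder is a routine application of Cohen--Macaulayness combined with standard simplicial manipulations.
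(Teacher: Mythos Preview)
Your link identification for externally additive simplices is correct and matches what the paper uses (the paper only states the edge case, but your general version is no harder), and the overall bad-simplex deformation plan is the right one. However, your maximality hypothesis is placed on the wrong object, and this breaks the key step.

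You take $\tau$ of maximal dimension among externally additive simplices in the image and a $j$-simplex $\eta$ with $f(\eta)=\tau$, then claim $f(\Link_{D^k}(\eta))\subset\Link_{\BX^m_n(R)}(\tau)$. Maximality of $\tau$ actually forces the opposite inclusion: if some $w\in\Link_{D^k}(\eta)$ had $f(w)\notin\tau$, then $f(\eta*w)=\tau\cup\{f(w)\}$ would be a simplex of $\BX^m_n(R)$ strictly containing $\tau$; since it still contains the dependent pair $\{\vec v_1,\vec v_2\}$ it cannot be standard, hence is externally additive of dimension $j+1$, contradicting the maximality of $\tau$. Thus $f(\Link_{D^k}(\eta))\subseteq\tau$, and your connectivity bound on $\Link_{\BX^m_n(R)}(\tau)$ is never invoked. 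The fix is to put the maximality in the \emph{domain}: choose $\Delta\subset D^k$ of maximal dimension with $\phi(\Delta)$ an externally additive edge (this suffices, as every externally additive simplex contains a unique such edge). Then for $w\in\Link_{D^k}(\Delta)$ one genuinely has $\phi(w)\notin\phi(\Delta)$, so $\phi(\Link_{D^k}(\Delta))\subset\Link_{\BX^m_n(R)}(\phi(\Delta))\cong\B^{m+1}_{n-1}(R)$, and your Cohen--Macaulay estimate applies. The paper's iteration then tracks the number of maximal-dimensional simplices in $D^k$ mapping to an externally additive edge; your ``decreasing dimension in the image'' bookkeeping would also need to be redone, since after the replacement the new piece $\partial\Delta*D'$ can map to externally additive simplices of various dimensions.
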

	
	\begin{proof}
		If $n = 0$, both $\B^m_n(R)$ and $\BX^m_n(R)$ are empty spaces, so there is nothing to prove. If $n = 1$, then $\B^m_1(R)$ is a discrete set, and we need to check that $H_0(\BX^m_1(R), \B^m_1(R)) = 0$. Since $\BX^m_1(R)$ and $\B^m_1(R)$	have the same vertex set, it holds that $\widetilde{H}_0(\B^m_1(R)) \twoheadrightarrow \widetilde{H}_0(\BX^m_1(R))$ is a surjection and this follows from the long exact homology sequence.
		
		For $n \geq 2$, we know that $\B^m_n(R)$ is $(n-2)$-connected and $n-2 \geq 0$. Therefore, $\B^m_n(R)$ is path-connected, and since $\B^m_n(R)$ is a subcomplex of $\BX^m_n(R)$ containing all vertices of $\BX^m_n(R)$, it follows that $\BX^m_n(R)$ is path-connected as well. We will prove that $\pi_k(\BX^m_n(R), \B^m_n(R)) = 0$ for $0 < k < n$. Since $\B^m_n(R)$ and $\BX^m_n(R)$ are path-connected, the relative Hurewicz theorem will then imply that $H_k(\BX^m_n(R), \B^m_n(R)) = 0$ for $k < n$.
		
		Let $\phi\colon (D^{k}, S^{k-1}) \to (\BX^m_n(R), \B^m_n(R))$ be a simplicial map, where $D^k$ is a disc of dimension $0 < k < n$ equipped with some combinatorial simplicial structure (see e.g.\ \cite[Definition 6.3 and Lemma 6.4]{putman2009}). Let $\Delta \in D^k$ be a simplex of maximal dimension with the property that $\phi(\Delta) = \{ \vec v_1 = \vec v_2 + r \vec e_i, \vec v_2\}$ is an externally additive edge in $\BX^m_n(R)$. Since $D^k$ is a combinatorial disc, it holds that $\Link_{D^k}(\Delta) \cong S^{k-\dim{\Delta}-1}$. The maximality assumption on $\Delta$ implies that $\phi(\Link_{D^k}(\Delta)) \subset \Link_{\BX^m_n(R)}(\phi(\Delta))$. We observe that $\Link_{\BX^m_n(R)}(\phi(\Delta)) \cong \Link_{\B^m_n(R)}(\{\vec v_2\}) \cong \B^{m+1}_{n-1}(R)$ is $(n-2)$-spherical. Since $k < n$ and $\dim \Delta \geq \dim \phi(\Delta) = 1$, it follows that $k-\dim{\Delta}-1 < n - 2$. Therefore, we see that the restriction
		$$ \phi|_\Link \colon \Link_{D^k}(\Delta) \to \Link_{\BX^m_n(R)}(\phi(\Delta))$$
		is nullhomotopic via a map
		$$ \psi \colon D(\Delta) \to \Link_{\BX^m_n(R)}(\phi(\Delta)),$$
		where $D(\Delta)$ is a combinatorial $(k-\dim{\Delta})$-disc (using e.g.\ \cite[Lemma 6.4]{putman2009}).
		We obtain a map 
		$$ \phi|_\Delta \ast \psi \colon \Delta \ast D(\Delta) \to \Star_{\BX^m_n(R)}(\phi(\Delta)) = \phi(\Delta) \ast \Link_{\BX^m_n(R)}(\phi(\Delta)),$$
		whose codomain is a contractible complex. Hence, the two maps
		$$ \phi|_{\partial \Delta} \ast \psi \colon \partial \Delta \ast D(\Delta) \to \Star_{\BX^m_n(R)}(\phi(\Delta))$$
		and
		$$ \phi|_{\Star} = \phi|_\Delta \ast \phi|_\Link \colon \Star_{D^k}(\Delta) = \Delta \ast \Link_{D^k}(\Delta) \to \Star_{\BX^m_n(R)}(\phi(\Delta))$$
		are homotopic relative to their restriction to the boundary $\partial \Delta \ast \Link_{D^k}(\Delta)$ of their domains. Using this homotopy we obtain a map $\phi'$ that is
		homotopic to $\phi$ and with the following description: The domain of $\phi'$ is a combinatorial disc $(D^k)'$ (using e.g.\ \cite[Lemma 5.7]{brueckpatztsroka2023}) that is obtained from $D^k$ by cutting out the combinatorial $k$-disc $\Star_{D^k}(\Delta) = \Delta \ast \Link_{D^k}(\Delta)$ and gluing in the combinatorial $k$-disc $\partial \Delta \ast D(\Delta)$ along the $(k-1)$-sphere $\partial(\partial \Delta \ast D(\Delta)) = \partial \Delta \ast \Link_{D^k}(\Delta)$. On $(\partial \Delta \ast D(\Delta), \partial \Delta \ast	\Link_{D^k}(\Delta))$, the map $\phi'$ is defined by $(\phi|_{\partial \Delta}	\ast \psi, \phi|_{\partial \Delta} \ast \phi|_\Link)$. Notice that the only	simplices in $\partial \Delta \ast D(\Delta)$ that $\phi|_\Delta \ast \psi$ can map to an externally additive edge are contained in $\partial \Delta$. Therefore, $\phi'$ maps one fewer simplex of dimension $\dim \Delta$ to an external edge than $\phi$ does. Recall that any simplex contained in the boundary sphere of the domain of $\phi$ is mapped to $\B^m_n(R)$. Any such simplex would therefore have to be contained in the boundary sphere of $\Star_{D^k}(\Delta) = \partial \Delta \ast \Link_{D^k}(\Delta)$. In particular, the two maps $\phi$ and $\phi'$ agree on these simplices. Therefore, $\phi$ and $\phi'$ are	homotopic relative to their boundary spheres. Iterating this procedure, we can homotope $\phi$ to a map $\phi^\dag$ whose image does not contain any externally additive simplices.  The image of such a map has to be contained in $\B^m_n(R)$, and therefore $[\phi] = 0 \in \pi_k(\BX^m_n(R), \B^m_n(R))$.
	\end{proof}
	
	\begin{remark} \label{NotSpherical}
		Let $m \geq 1$. Note that $\BX^m_n(R)$ is \emph{not} $(n-1)$-connected in general: If $n = 2$, then the loop given by the three edges $\{\vec e_{m+1}, \vec e_{m+2}\}, \{\vec e_{m+2}, \vec e_{m+1} + \vec e_{m+2}\}$ and $\{ \vec
		e_{m+1} + \vec e_{m+2}, \vec e_{m+1}\}$ is not nullhomotopic in $\BX^m_2(R)$. This can be seen as follows. Projecting onto $R^2$ gives a splitting of the inclusion $\B_2(R) \m \BX^m_2(R)$. Thus $\pi_1(\B_2(R)) \m \pi_1(\BX^m_2(R))$ is injective. The above loop is clearly nonzero in $\pi_1(\B_2(R))$ since $\B_2(R)$ is a graph, and hence it is nontrivial in $\pi_1(\BX^m_2(R))$ as well.
	\end{remark}
	
	\begin{lemma}\label{lemma:BXvsB}
		Suppose that \autoref{assumption:R} holds, and let $m \geq 1$. If we view $\B_n(R)$ as a subcomplex of $\BX^m_n(R)$ via the standard inclusion 
		$$R^n \hookrightarrow R^{m+n} = R^m \oplus R^n,$$
		then $\pi_k(\BX^m_n(R), \B_n(R))$ and $H_{k}(\BX^m_n(R), \B_n(R))$ are zero for all $k < n$.
	\end{lemma}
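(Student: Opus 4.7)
My plan is to generalize the surgery argument of \cref{lemma:BXvsBrel} by enlarging the class of ``bad'' simplices to include both externally additive simplices \emph{and} standard simplices containing a vertex outside the standard $R^n \subseteq R^{m+n}$.

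Given a simplicial map $\phi \colon (D^k, S^{k-1}) \to (\BX^m_n(R), \B_n(R))$ with $k < n$, I iteratively select a maximal-dimensional simplex $\Delta \subseteq D^k$ whose image $\phi(\Delta)$ is bad, and replace $\Star_{D^k}(\Delta) = \Delta \ast \Link_{D^k}(\Delta)$ by a new combinatorial disc $\partial \Delta \ast D(\Delta)$, where $D(\Delta)$ fills in a nullhomotopy of $\phi|_{\Link_{D^k}(\Delta)}$ inside an appropriate subcomplex of $\Link_{\BX^m_n(R)}(\phi(\Delta))$. Since $\phi|_{S^{k-1}}$ maps into $\B_n(R)$, any bad $\Delta$ is interior to $D^k$, so $\Link_{D^k}(\Delta) \cong S^{k - \dim \Delta - 1}$, exactly as in the proof of \cref{lemma:BXvsBrel}.

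When $\phi(\Delta)$ is an externally additive edge, the surgery proceeds identically to that of \cref{lemma:BXvsBrel}. The genuinely new case is $\Delta = \{\vec v\}$ a single vertex with $\phi(\vec v) \in R^{m+n}\setminus R^n$. Here $\Link_{\B^m_n(R)}(\phi(\vec v)) \cong \B^{m+1}_{n-1}(R)$ (identifying $\phi(\vec v)$ as a new basis-like vector, by \cref{lemma:complements-are-free} and \cref{assumption:R}), and the $\B_n(R)$-compatible part of this link equals $\Link_{\B_n(R)}(\pi(\phi(\vec v))) \cong \B_{n-1}(R)$, where $\pi$ denotes projection onto the $R^n$-factor. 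The surgery requires a nullhomotopy of $\phi|_{\Link_{D^k}(\vec v)}$ landing in the latter, sitting inside the former. Both complexes are $(n-3)$-connected by \cref{assumption:R}, and the required relative connectivity of the pair is, by an inductive application of this lemma one rank down, sufficient for the nullhomotopy to exist whenever $k < n$.

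The main obstacle I expect is the boundary case $k = n - 1$, where the required connectivity is at the very edge of what the induction provides, and where the relative link $(\B^{m+1}_{n-1}(R), \B_{n-1}(R))$ might fail to be $(n-2)$-connected. This case should be handled by exploiting the additional externally additive structure of $\BX^m_n(R)$ (which is not present in $\B^m_n(R)$), allowing the nullhomotopy to take a detour through the externally additive part of the link; this is the same mechanism that made \cref{lemma:BXvsBrel} valuable in the first place. A lexicographic complexity measure---first by number of bad vertices, then by number of externally additive simplices---will be needed to ensure the iterative surgery terminates without reintroducing previously eliminated bad simplices.
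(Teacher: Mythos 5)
Your proposal is a genuinely different route—one big surgery argument on a unified class of ``bad simplices''—but it has a gap that the paper's proof is specifically designed to avoid. When the chosen maximal bad simplex is a single vertex $\vec v$ with $\phi(\vec v) = \vec z \notin R^n$, your surgery needs to fill $\phi|_{\Link_{D^k}(\vec v)}\colon S^{k-1} \to ?$ with a disc landing in the ``good'' subcomplex $\Link_{\B_n(R)}(\pi(\vec z))$. But the boundary map $\phi|_{\Link_{D^k}(\vec v)}$ only lands in $\Link_{\BX^m_n(R)}(\vec z)$, which can itself contain other bad vertices and externally additive simplices; it is not contained in $\Link_{\B_n(R)}(\pi(\vec z))$ to begin with, so the filling you ask for is not even well-posed. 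Worse, the link of a vertex in $\BX^m_n(R)$ (before running \cref{lemma:BXvsBrel}) is a complex of type $\BX$ one rank down, which by \cref{NotSpherical} need not be spherical—so even if you allow the filling disc to live in the full link, you have no connectivity estimate to run the surgery, and the lexicographic complexity can increase rather than decrease. You correctly flag the boundary case $k=n-1$ as the pressure point, but the suggested fix (``a detour through the externally additive part of the link'') is exactly the missing idea, not a resolution of it.

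The paper's proof sidesteps all of this by changing the order of operations and, crucially, by invoking \cite[Theorem 2.4]{galatiusrandalwilliams2018} (Galatius--Randal-Williams). First, \cref{lemma:BXvsBrel} is applied to homotope $\phi$ to land entirely in $\B^m_n(R)$, eliminating externally additive simplices from the image before any attempt to push vertices into $R^n$. Second, since $\B^m_n(R)$ is (locally) weakly Cohen--Macaulay, the GRW theorem homotopes $\phi$ to a \emph{link-preserving} map: every interior vertex $w$ satisfies $\phi(\Link_{D^k}(w)) \subseteq \Link_{\B^m_n(R)}(\phi(w))$. With that in hand, for a vertex $w$ with $\phi(w)=\vec z$ having nonzero $\vec e_i$-coordinate $r_i$, one simply \emph{pushes off}: $\phi(\tau) \cup \{\vec z\}$ being a simplex forces $\phi(\tau) \cup \{\vec z, \vec z - r_i\vec e_i\}$ to be an externally additive simplex and hence $\phi(\tau) \cup \{\vec z - r_i\vec e_i\}$ to be a simplex of $\B^m_n(R)$, so replacing $\phi(w)$ by $\vec z - r_i\vec e_i$ gives a simplicial homotopy through $\Star(\{\vec z, \vec z-r_i\vec e_i\})$. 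No disc-filling in a link is ever needed, and the complexity measure (number of vertices with nonzero $\vec e_i$-coordinate, iterated over $i = 1, \ldots, m$) strictly decreases. The GRW link-preserving step is the ingredient your proposal is missing, and it is what converts an ill-posed filling problem into a clean vertex-replacement move.
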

	
	\begin{proof}
		If $n = 0$, both $\B_n(R)$ and $\BX^m_n(R)$ are empty spaces, and hence the claim holds. If $n = 1$, then $\B_1(R)$ is a discrete set, and a proper subset of the vertex set of $\BX^m_1(R)$. We need to see that $H_0(\BX^m_1(R), \B_1(R)) = 0$. Let $\vec v = r_1 \vec e_1 + \dots + r_m \vec e_m + r_{m+1} \vec e_{m+1}$ be a vertex in $\BX^m_1(R)$. Then $\{\vec e_1, \dots, \vec e_m, \vec v\}$ is a basis of $R^{m+1}$, and therefore $\vec v_i = r_i \vec e_i + \dots + r_{m+1} \vec e_{m+1}$ is a vertex in $\BX^m_1(R)$ for every $1 \leq i \leq m+1$ because $\{\vec e_1, \dots, \vec e_m, \vec v_i\}$ is also a basis of $R^{m+1}$. Note that $\vec v_1 = \vec v$, and observe that $\vec v_{m+1} = r_{m+1} \vec e_{m+1}$ has to be a vertex in $\B_1(R)$. It follows that every vertex $\vec v \in \BX^m_1(R)$ can be connected to a vertex of $\B_1(R)$ by an edge path consisting of at most $m$ externally additive edges $\{\vec v_i, \vec v_{i+1}\}$ in $\BX^m_1(R)$. Therefore, $\widetilde{H}_0(\B_1(R)) \twoheadrightarrow \widetilde{H}_0(\BX^m_1(R))$ is a surjection, and the claim follows from the long exact sequence in homology.
		
		For $n \geq 2$, we know that $B_n(R)$ is path-connected because it is $(n-2)$-connected and $(n-2) \geq 0$. Furthermore, $\BX^m_n(R)$ is path-connected as argued in \cref{lemma:BXvsBrel}. We will prove that 	$\pi_k(\BX^m_n(R), \B_n(R)) = 0$ for $0 < k < n$. Since $\B_n(R)$ and $\BX^m_n(R)$ are both path-connected, we can then apply the relative Hurewicz theorem to	conclude that $H_k(\BX^m_n(R), \B_n(R)) = 0$ for $k < n$.
		
		Let $0 < k < n$ and consider a simplicial map $\phi\colon (D^{k}, S^{k-1}) \to (\BX^m_n(R), \B_n(R))$, where $D^k$ is a combinatorial disc (see e.g.\ \cite[Definition 6.3 and Lemma 6.4]{putman2009}). By \cref{lemma:BXvsBrel}, we can assume that $\phi\colon (D^{k}, S^{k-1}) \to (\B^m_n(R), \B_n(R))$. Since $\B^m_n(R)$ is Cohen--Macaulay of dimension $(n-1)$, it is locally weakly Cohen--Macaulay of dimension $(n-1)$ in the sense of \cite[§2.1]{galatiusrandalwilliams2018}; that is, the link of a $p$-simplex is isomorphic	to $\B^{m+p+1}_{n-(p+1)}(R)$ and hence $((n-1)-p-2)$-connected. Applying \cite[Theorem 2.4]{galatiusrandalwilliams2018} to $\phi$, we can assume that every vertex $w$ in the interior of $D^k$, i.e.\ $w \not\in S^{k-1}$, satisfies
		$$\phi(\Link_{D^k}(w)) \subseteq \Link_{\B^m_n(R)}(\phi(w)).$$
		
		Let $1 \leq i \leq m$. Pick any vertex $w \in D^k$ with $\phi(w) = \vec z$ and such that the $\vec e_i$-coordinate $r_i$ of $\vec z$ is nonzero. Then $\vec z$ is not contained in $\B_n(R)$, so $w$ has to be contained in the
		interior of $D^k$ since $\phi$ maps the boundary sphere $S^{k-1}$ to $\B_n(R)$. We will explain how one can homotope $\phi$ inside $\BX^m_n(R)$ to a	simplicial map $$\phi'\colon (D^{k}, S^{k-1}) \to (\B^m_n(R), \B_n(R))$$ such that
		\begin{itemize}
			\item the simplicial structure of the domain of $\phi$ and $\phi'$ are exactly the same,
			\item $\phi(v) = \phi'(v)$ for all vertices of $D^k$ except the vertex $w$,
			\item and $\phi'(w) = \vec z - r_i \vec e_i $. Note that the $\vec
			e_i$-coordinate of $\vec z - r_i \vec e_i$ is zero and no other coordinates have
			been changed. 
		\end{itemize}
		We start by observing that $\{\vec z, \vec z - r_i \vec e_i \}$ is an externally additive edge in $\BX^m_n(R)$. Since the image $\phi(D^k) \subset \B^m_n(R)$ of $\phi$ cannot contain externally additive edges, it follows that $\vec z - r_i \vec e_i$ is not contained in $\phi(\Link_{D^k}(w))$. Since $w$ is contained in the interior of $D^k$, the previous paragraph implies that $\phi(\Link_{D^k}(w)) \subseteq \Link_{\B^m_n(R)}(\vec z)$. In particular, $\phi(x) \neq \vec z$ if $x \in \Link_{D^k}(w)$. Hence, the following holds for every simplex $\tau$ of $\Link_{D^k}(w)$.
		\begin{itemize}
			\item The \emph{disjoint} union $\phi(\tau) \sqcup \{ \vec z, \vec z - r_i \vec e_i \}$ is an externally additive simplex in $\BX^m_n(R)$,
			\item $\phi(\tau) \sqcup \{\vec z\} = \phi(\tau \ast w)$,
			\item and $\phi(\tau) \sqcup \{\vec z - r_i \vec e_i \}$ is a simplex of $\B^m_n(R)$.
		\end{itemize}
		Since every simplex in $\Star_{D^k}(w) = \Link_{D^k}(w) \ast w$ is of the form $\tau \ast w$ for some simplex $\tau \in \Link_{D^k}(w)$, this implies that there exists a simplicial map
		$$ h\colon D^k \cup_{\Star_{D^k}(w)}(\Star_{D^k}(w) \ast w') \to \BX^m_n(R),$$
		where $w'$ is a new vertex, $h(w') = \vec z - r_i \vec e_i$ and $h|_{D^k} = \phi$. This is the homotopy between $\phi$ and $\phi'$.
		
		Iterating this construction, we see that $\phi$ is homotopic (relative to the
		boundary sphere $S^k$) to a map $\phi^\dag$ whose image does not contain
		vertices $\vec z$ with nonzero $\vec e_i$-coordinate. Applying this procedure for every $1
		\leq i \leq m$ implies that $\phi$ is homotopic (relative to the boundary sphere
		$S^k$) to a map $\phi^\dag$ satisfying $\phi^\dag(D^k) \subset \B_n(R)$. This
		completes the proof.
	\end{proof}
	
\subsubsection{Surjectivity of the relative apartment class map}
	Now we are ready to prove \autoref{theorem:generators}.
	
	\begin{proof}[Proof of \autoref{theorem:generators}]
		All homology in this proof is taken with $\Z$ coefficients, but we frequently omit the $\Z$ from the notation for readability. Let $0 \leq j \leq n$ and consider a symbol
		$$\Theta = [\vec v_1, \vec v_1+\alpha_1 \vec e_{\beta(1)}]\ast\dots\ast [\vec v_j, \vec v_j+\alpha_j \vec e_{\beta(j)}] \in \relstgen^m_n(j).$$
		Associated with $\Theta$, we find a simplex $\sigma(\Theta) = \{\vec v_1, \dots, \vec v_j\} \in \B^m_n(R)$ and a $\Theta$-dependent spanning map
		$$\Span_{\Theta} \colon \bS^{j-1} \ast \bLink_{\B^m_n(R)}(\sigma(\Theta)) \to \bT_n^m(R).$$
		Passing to reduced homology and applying the suspension isomorphism $j$ times, we obtain a map
		$$f_\Theta \colon \widetilde{H}_{n-1-j}(\bLink_{\B^m_n(R)}(\sigma(\Theta))) \to \widetilde{H}_{n-1}(\bT^m_n(R)).$$
		We will prove by induction on $j$ that the map
		$$F_j = \oplus f_\Theta \colon \bigoplus_{\relstgen^m_n(j)} \widetilde{H}_{n-1-j}(\bLink_{\B^m_n(R)}(\sigma(\Theta))) \twoheadrightarrow \widetilde{H}_{n-1}(\bT^m_n(R))$$
		is a $\GL^m_n(R)$-equivariant surjection for any $0\leq j\leq n$. Here, the right $\GL^m_n(R)$-action on the left-hand-side is described as follows. An element $\phi \in \GL^m_n(R)$ maps a class 
		$$[c] \in \widetilde{H}_{n-1-j}(\bLink_{\B^m_n(R)}(\sigma(\Theta)))$$
		in the summand indexed by $\Theta \in \relstgen^m_n(j)$ to the class 
		$$[c \cdot \phi] \in \widetilde{H}_{n-1-j}(\bLink_{\B^m_n(R)}(\sigma(\Theta \cdot \phi)))$$ in the summand indexed by $(\Theta \cdot \phi) \in \relstgen^m_n(j)$.\\
		
		For $j=n$, we note that $\relstgen^m_n(n) = \relstgen^m_n$ and that $\bLink_{\B^m_n(R)}(\sigma(\Theta)) = \emptyset$ for every $\Theta \in \relstgen^m_n$. In this case, $\widetilde{H}_{-1}(\bLink_{\B^m_n(R)}(\sigma(\Theta)))\cong \Z$ for every $\Theta \in \relstgen^m_n$ and
		$$F = F_n \colon \Z\{\relstgen^m_n\} =  \bigoplus_{\relstgen^m_n} \Z \twoheadrightarrow \widetilde{H}_{n-1}(\bT^m_n) = \St^m_n(R)$$
		is the map in the statement of the theorem. We will now explain the induction argument.\\
		
		For the base case, we consider $j=0$. In this case, $\relstgen^m_n(0)$
		contains a unique element, the empty symbol $\Theta$, and the $\Theta$-dependent spanning map
		$$\Span_{\Theta} \colon \emptyset \ast \bLink_{\B^m_n(R)}(\emptyset) = \B^m_n(R) \to \bT_n^m$$
		agrees with the spanning map studied in \autoref{lemma:BvsT}. Therefore, \autoref{lemma:BvsT} implies that $F_0$ is an $\GL^m_n(R)$-equivariant surjection.\\
		
		Suppose the statement holds for $j \geq 0$. If $j\leq n-1$, we will prove the statement for $j+1$.\\
		
		Consider a symbol $\Theta \in \relstgen^m_n(j)$ and consider any decomposition $R^{m+n}=R^m \oplus \langle \sigma(\Theta) \rangle \oplus C_{\sigma(\Theta)}$. Since $j \neq n$, it holds that $C_{\sigma(\Theta)} \neq 0$. By \cref{assumption:R}, $\widetilde{H}_*(\B(C_{\sigma(\Theta)})) \cong \widetilde{H}_*(\B_{n-j}(R)) = 0$ unless $* =n-j-1$ and $\widetilde{H}_*(\Link_{\B^m_n(R)}(\sigma(\Theta))) \cong \widetilde{H}_*(\B_{n-j}^{m+j}(R)) = 0$ unless $* =n-j-1$. It follows that the long exact sequence for the pair $(\Link_{\B^m_n(R)}(\sigma(\Theta)), \B(C_{\sigma(\Theta)}))$ becomes a short exact sequence
		\begin{multline*}
			0\to \widetilde{H}_{n-1-j}(\B(C_{\sigma(\Theta)}))\to \widetilde{H}_{n-1-j}(\Link_{\B^m_n(R)}(\sigma(\Theta)))\\ \to \widetilde{H}_{n-1-j}(\Link_{\B^m_n(R)}(\sigma(\Theta)),\B(C_{\sigma(\Theta)}))\to 0.
		\end{multline*}
		In particular, the map 
		\[
			\ell_{(\Theta, C_\Theta)} \colon \widetilde{H}_{n-1-j}(\Link_{\B^m_n(R)}(\sigma(\Theta)))\twoheadrightarrow \widetilde{H}_{n-1-j}(\Link_{\B^m_n(R)}(\sigma(\Theta)),\B(C_{\sigma(\Theta)}))
		\]
		is a surjection.\\
		
		By \autoref{lemma:BvsT}, it follows that the following diagram commutes:
		$$
		\begin{tikzcd}
			\widetilde{H}_{n-j-1}(\B(C_{\sigma(\Theta)})) \ar[rr] \ar[d,"\Sigma^j" swap, "\cong"] & & \widetilde{H}_{n-j-1}(\Link_{\B^m_n(R)}(\sigma(\Theta))) \ar[d, "\Sigma^j" swap, "\cong"] \\
			\widetilde{H}_{n-1}(\bS^{j-1} \ast \bB(C_{\sigma(\Theta)})) \ar[rr, "\incl_{(\Theta, C_\Theta)}"] \ar[drr,"0", swap] & & \widetilde{H}_{n-1}(\bS^{j-1} \ast \bLink_{\B^m_n(R)}(\sigma(\Theta))) \ar[d, "\Span_\Theta"] \\
			&& \widetilde{H}_{n-1}(\bT_n^m(R)).
		\end{tikzcd}
		$$
		Notice that the map $f_\Theta$ is the composition of the two rightmost vertical
		maps in the diagram above. We conclude that for any pair $(\Theta, C_\Theta)$
		the map $f_\Theta$ factors over
		$\widetilde{H}_{n-j-1}(\Link_{\B^m_n(R)}(\sigma(\Theta)),\B(C_{\sigma(\Theta)}))$:
		$$
		\begin{tikzcd}
			\widetilde{H}_{n-j-1}(\Link_{\B^m_n(R)}(\sigma(\Theta))) \ar[rr, twoheadrightarrow, "\ell_{(\Theta, C_\Theta)}"] \ar[d, "f_\Theta"] && \widetilde{H}_{n-j-1}(\Link_{\B^m_n(R)}(\sigma(\Theta)),\B(C_{\sigma(\Theta)})) \ar[dll,dashed,  "\Span_{(\Theta, C_\Theta)}"] \\
			\widetilde{H}_{n-1}(T_n^m(R)) & &
		\end{tikzcd}.
		$$
		By \autoref{lemma:BXvsB}, we know that
		\[
			\widetilde{H}_{n-j-1}(\BX^{m, \sigma(\Theta)}_n(R), \B(C_{\sigma(\Theta)}))) \cong \widetilde{H}_{n-j-1}(\BX_{n-j}^{m+j}(R), \B_{n-j}(R))=0.
		\]
		Therefore, the long exact sequence of the triple $(\BX^{m, \sigma(\Theta)}_n(R), \Link_{\B^m_n(R)}(\sigma(\Theta)),\B(C_{\sigma(\Theta)}))$ contains a surjection
		$$\partial_{(\Theta, C_\Theta)}\colon \widetilde{H}_{n-j}(\BX^{m, \sigma(\Theta)}_n(R), \Link_{\B^m_n(R)}(\sigma(\Theta))) \twoheadrightarrow \widetilde{H}_{n-j-1}(\Link_{\B^m_n(R)}(\sigma(\Theta)),\B(C_{\sigma(\Theta)})).$$
		Composing $\partial_{(\Theta, C_\Theta)}$ with the map above, we obtain a map 
		$$g_\Theta = \Span_{(\Theta, C_\Theta)} \circ \partial_{(\Theta,
			C_\Theta)}\colon \widetilde{H}_{n-j}(\BX^{m, \sigma(\Theta)}_n(R), \Link_{\B^m_n(R)}(\sigma(\Theta))) \to \widetilde{H}_{n-1}(T_n^m(R)).$$
		We remark that $g_{\Theta}$ agrees with the map
		$$\widetilde{H}_{n-j}(\BX^{m, \sigma(\Theta)}_n(R),
		\Link_{\B^m_n(R)}(\sigma(\Theta))) \xrightarrow{\partial}
		\widetilde{H}_{n-j-1}(\Link_{\B^m_n(R)}(\sigma(\Theta))) \xrightarrow{f_{\Theta}}
		\widetilde{H}_{n-1}(T_n^m(R)),$$
		since by definition $\partial_{(\Theta, C_\Theta)} = \ell_{(\Theta, C_\Theta)} \circ \partial$ where $\partial$ is the boundary map for the pair $(\BX^{m, \sigma(\Theta)}_n(R),
		\Link_{\B^m_n(R)}(\sigma(\Theta)))$. In particular, $g_{\Theta}$ does not depend on the choice of complement
		$C_{\sigma(\Theta)}$; we only introduced this factorization to check surjectivity in the next step.
		
		If we perform the above construction for every $\Theta \in \relstgen^m_n(j)$, we
		arrive at a map
		\begin{equation*}
			G_{j+1} = \oplus g_\Theta\colon \bigoplus_{\relstgen^m_n(j)} \widetilde{H}_{n-j}(\BX^{m, \sigma(\Theta)}_n(R), \Link_{\B^m_n(R)}(\sigma(\Theta))) \to \widetilde{H}_{n-1}(T_n^m(R)).
		\end{equation*}
		We will show that $G_{j+1}$ is surjective. Then we will relate this map to $F_{j+1}$ and use its surjectivity to show that $F_{j+1}$ is surjective as well.
		
		Let us first verify that $G_{j+1}$ is surjective. By the definition of $G_{j+1}$, there is a commutative diagram
		$$
		\begin{tikzcd}
			\bigoplus\limits_{\relstgen^m_n(j)} \widetilde{H}_{n-j-1}(\Link_{\B^m_n(R)}(\sigma(\Theta))) \ar[rr, twoheadrightarrow, "\oplus \ell_{(\Theta, C_\Theta)}"] \ar[d, "F_j = \oplus f_\Theta"] && \bigoplus\limits_{\relstgen^m_n(j)} \widetilde{H}_{n-j-1}(\Link_{\B^m_n(R)}(\sigma(\Theta)),\B(C_{\sigma(\Theta)})) \ar[dll,dashed,  "\oplus \Span_{(\Theta, C_\Theta)}"] \\
			\widetilde{H}_{n-1}(T_n^m(R)) & &
		\end{tikzcd}
		$$
		By the induction hypothesis, the leftmost arrow in this diagram, $F_j$, is surjective. Therefore, the map
		$$\oplus \Span_{(\Theta, C_\Theta)} \colon \bigoplus_{\relstgen^m_n(j)} \widetilde{H}_{n-j-1}(\Link_{\B^m_n(R)}(\sigma(\Theta)),\B(C_{\sigma(\Theta)})) \twoheadrightarrow \widetilde{H}_{n-1}(T_n^m(R))$$
		is surjective. On the other hand,
		\begin{multline*}
			\oplus \partial_{(\Theta, C_\Theta)} \colon \bigoplus_{\relstgen^m_n(j)} \widetilde{H}_{n-j}(\BX^{m, \sigma(\Theta)}_n(R), \Link_{\B^m_n(R)}(\sigma(\Theta))) \twoheadrightarrow\\ \bigoplus_{\relstgen^m_n(j)} \widetilde{H}_{n-1-j}(\Link_{\B^m_n(R)}(\sigma(\Theta)),\B(C_{\sigma(\Theta)}))
		\end{multline*}
		is surjective as well because each of the functions $\partial_{(\Theta, C_\Theta)}$ is surjective. It follows that
		$$ G_{j+1} = \oplus g_\Theta = \oplus (\Span_{(\Theta, C_\Theta)} \circ \partial_{(\Theta, C_\Theta)}) = (\oplus \Span_{(\Theta, C_\Theta)}) \circ (\oplus \partial_{(\Theta, C_\Theta)})$$
		is surjective as a composition of two surjections.
		
		We will now explain how $G_{j+1}$ can be related to $F_{j+1}$. For this we will decompose the domain of $G_{j+1}$. The definition of $\BX^{m, \sigma(\Theta)}_n(R)$ implies that
		\begin{align*}
			&\widetilde{H}_{n-j}(\BX^{m, \sigma(\Theta)}_n(R), \Link_{\B^m_n(R)}(\sigma(\Theta)))\\
			&= \bigoplus_{\substack{\tau = \{\vec v, \vec v + r \vec e \},\\ \vec v \in \Link_{\B^m_n(R)}(\sigma(\Theta)) \text{ and } r\not=0,\\ \vec e \in \{\vec e_1,\dots, \vec e_{m}\} \cup \sigma(\Theta)}} \widetilde{H}_{n-j}(\Star_{\BX^{m, \sigma(\Theta)}_n(R)}(\tau), \Star_{\BX^{m, \sigma(\Theta)}_n(R)}(\tau) \cap \Link_{\B^m_n(R)}(\sigma(\Theta))).
		\end{align*}
		Furthermore, we have the following four identifications
		\begin{align}
			\nonumber &\widetilde{H}_{n-j}(\Star_{\BX^{m, \sigma(\Theta)}_n(R)}(\tau), \Star_{\BX^{m, \sigma(\Theta)}_n(R)}(\tau) \cap \Link_{\B^m_n(R)}(\sigma(\Theta)))\\
			\label{identification-1} &\cong \widetilde{H}_{n-j-1}(\Star_{\BX^{m, \sigma(\Theta)}_n(R)}(\tau) \cap \Link_{\B^m_n(R)}(\sigma(\Theta))) \\
			\label{identification-2} &\cong \widetilde{H}_{n-j-1}(\partial(\tau) \ast \Link_{\BX^{m, \sigma(\Theta)}_n(R)}(\tau))\\
			\label{identification-3} &\cong \widetilde{H}_{n-j-2}(\Link_{\BX^{m, \sigma(\Theta)}_n(R)}(\tau))\\
			\label{identification-4} &\cong \widetilde{H}_{n-j-2}(\Link_{\B_{n}^{m}(R)}(\sigma(\Theta) \ast \{\vec v\})).
		\end{align}
		Therefore, it follows that
		\begin{align*}
			&\widetilde{H}_{n-j}(\BX^{m, \sigma(\Theta)}_n(R), \Link_{\B^m_n(R)}(\sigma(\Theta)))\\
			&\cong \bigoplus_{\substack{\tau = \{\vec v, \vec v + r \vec e \},\\ \vec v \in \Link_{\B^m_n(R)}(\sigma(\Theta)) \text{ and } r\not=0,\\ \vec e \in \{\vec e_1,\dots, \vec e_{m}\} \cup \sigma(\Theta)}} \widetilde{H}_{n-j-2}(\Link_{\B_{n}^{m}(R)}(\sigma(\Theta) \ast \{ \vec v\}))
		\end{align*}
		if $n-j-1 \geq 0$. Hence, $G_{j+1}$ can be identified with a surjection
		$$\bigoplus_{\relstgen^m_n(j)} \bigoplus_{\substack{\tau = \{\vec v, \vec v + r \vec e \},\\ \vec v \in \Link_{\B^m_n(R)}(\sigma(\Theta)) \text{ and } r\not=0,\\ \vec e \in \{\vec e_1,\dots, \vec e_{m}\} \cup \sigma(\Theta)}} \widetilde{H}_{n-j-2}(\Link_{\B_{n}^{m}(R)}(\sigma(\Theta) \ast \{ \vec v\})) \twoheadrightarrow \widetilde{H}_{n-1}(T_n^m(R)).$$
		
		Consider the following symbol $\Theta^\dag \in \relstgen^m_n(j+1)$,
		$$\Theta^\dag = [ \vec v_1, \vec v_1+ r_1 \vec e_{\beta(1)}]\ast\dots \ast [\vec v_j, \vec v_j+ r_j \vec e_{\beta(j)}] \ast [\vec v_{j+1}, \vec v_{j+1}+ r_{j+1} \vec e_{\beta(j+1)}].$$
		Forgetting the last pair $[\vec v_{j+1}, \vec v_{j+1}+ r_{j+1} \vec e_{\beta(j+1)}]$ in $\Theta^\dag$, we obtain a symbol
		$$\Theta = [\vec v_1, \vec v_1+ r_1 \vec e_{\beta(1)}]\ast\dots \ast [\vec v_j, \vec v_j+ r_j \vec e_{\beta(j)} ] \in \relstgen^m_n(j).$$
		Forgetting the order of the pair $[\vec v_{j+1}, \vec v_{j+1}+ r_{j+1} \vec e_{\beta(j+1)}]$, we obtain an externally additive edge $\tau = \{\vec v_{j+1}, \vec v_{j+1}+ r_{j+1} \vec e_{\beta(j+1)} \}$ in $\BX^{m, \sigma(\Theta)}_n(R)$. Noting that $\sigma(\Theta^\dag) = \sigma(\Theta) \ast \{\vec v_{j+1}\}$, we obtain a forgetful surjection
		\begin{multline*}
			U_{j+1}\colon \bigoplus_{\relstgen^m_n(j+1)} \widetilde{H}_{n-j-2}(\Link_{\B_{n}^{m}(R)}(\sigma(\Theta^\dag))) \twoheadrightarrow\\ 
			\bigoplus_{\relstgen^m_n(j)} \bigoplus_{\substack{\tau = \{\vec v, \vec v + r \vec e \},\\ \vec v \in \Link_{\B^m_n(R)}(\sigma(\Theta)) \text{ and } r\not=0,\\ \vec e \in \{\vec e_1,\dots, \vec e_{m}\} \cup \sigma(\Theta)}} \widetilde{H}_{n-j-2}(\Link_{\B_{n}^{m}(R)}(\sigma(\Theta) \ast \{ \vec v\})).
		\end{multline*}
		
		We claim that
		$$ G_{j+1} \circ U_{j+1} = F_{j+1}.$$
		Once this is checked, the induction step and the proof of the theorem are complete.
		
		To see that this equality holds, we consider the summand $\widetilde{H}_{n-j-2}(\Link_{\B_{n}^{m}(R)}(\sigma(\Theta^\dag)))$ indexed by $\Theta^\dag \in \relstgen^m_n(j+1)$ of the domain of $F_{j+1}$ and verify that the restriction of $G_{j + 1} \circ U_{j + 1}$ to it is equal to the function $f_{\Theta^\dag}$. Since $F_{j+1} = \oplus_{\Theta^\dag \in \relstgen^m_n(j+1)} f_{\Theta^\dag}$, the claim then follows.
		
		Applying the definition of $U_{j+1}$, we forget the last pair in the symbol $\Theta^\dag$ to obtain the symbol $\Theta\in \relstgen^m_n(j)$ and the externally additive edge $\tau = \{\vec v_{j+1}, \vec v_{j+1} + r_{j+1} \vec e_{\beta(j+1)}\}$ in $\BX^{m, \sigma(\Theta)}_n(R)$. The map $f_{\Theta^\dag}$ was obtained from the $\Theta^{\dag}$-dependent spanning map
		$$\Span_{\Theta^\dag} \colon \bS^{j} \ast \bLink_{\B^m_n(R)}(\sigma(\Theta^\dag)) \to \bT_n^m(R)$$
		and $(j+1)$-many suspension isomorphisms. The last one of the $0$-spheres in the join $\bS^j = \bS^0 \ast \dots \ast \bS^0$ is mapped to the pair $\{\langle \vec v_{j+1} \rangle, \langle \vec v_{j+1} + r_{j+1} \vec e_{\beta(j+1)} \rangle\}$ in $\bT_n^m(R)$. Hence, we can relate $f_{\Theta^\dag}$ to $f_{\Theta}$ by writing $f_{\Theta^\dag}$ as the following composition:
		$$
		\begin{tikzcd}
			\widetilde{H}_{n-j-2}(\bLink_{\B_n^m(R)}(\sigma(\Theta^{\dag}))) 
			\ar[d, "\Sigma_{\partial \tau}", "\cong"']
			\ar[ddr, "f_{\Theta^\dag}"', bend left]\\
			\widetilde{H}_{n-j-1}(\partial(\tau) \ast \bLink_{\B_n^m(R)}(\sigma(\Theta^{\dag}))) 
			\ar[d, "\text{inclusion}"]
			& \\
			\widetilde{H}_{n-j-1}(\bLink_{\B_n^m(R)}(\sigma(\Theta))) 
			\ar[r, "f_{\Theta}"]
			& \widetilde{H}_{n-1}(T_n^m(R))
		\end{tikzcd}
		$$
		Using the identifications in \cref{identification-1,identification-2,identification-3,identification-4}, that $\sigma(\Theta^\dag) = \sigma(\Theta) \cup \{\vec v_{j+1}\}$ and abbreviating $\Star_{\BX^{m, \sigma(\Theta)}_n(R)}(\tau)$ as $\Star(\tau)$,  we also find at the following commutative diagram which contains the above factorization of $f_{\Theta^\dag}$ as the left vertical composition.
		$$
		\begin{tikzcd}
			\widetilde{H}_{n-j-2}(\Link_{\B^m_n(R)}(\sigma(\Theta^\dag))) 
			\ar[r, "U_{j+1}", equal]
			\ar[d, "\Sigma_{\partial \tau}", "\cong"']
			& \widetilde{H}_{n-j-2}(\Link_{\B^{m}_n(R)}(\sigma(\Theta) \cup \{\vec v_{j+1}\})
			\ar[d, "\cong"', "\text{\cref{identification-4,identification-3}}"]\\
			\widetilde{H}_{n-j-1}(\partial(\tau) \ast \Link_{\B^m_n(R)}(\sigma(\Theta^\dag))
			\ar[r, "\cong", "\text{\cref{identification-4}}"' {yshift=-7.5pt}]
			\ar[d, "\text{inclusion}"]
			& \widetilde{H}_{n-j-1}(\partial(\tau) \ast \Link_{\BX^{m, \sigma(\Theta)}_n(R)}(\tau))
			\ar[d, "{\text{inclusion, }\cong}"', "\text{\cref{identification-2}}"]
			\\
			\widetilde{H}_{n-j-1}(\Link_{\B_n^m}(\sigma(\Theta)))
			\ar[dddr, "\ell_{(\Theta, C_{\Theta})}"' near end, bend right=25]
			\ar[ddddr, "f_\Theta"', bend right=40]
			& \widetilde{H}_{n-j-1}(\Star(\tau)\cap \Link_{\B_n^m}(\sigma(\Theta))) 
			\ar[l, "\text{inclusion}" {yshift=-3pt}]
			\ar[d, "{\text{connecting map, } \cong}"', "\text{\cref{identification-1}}"]\\
			& \widetilde{H}_{n-j}(\Star(\tau), \Star(\tau)\cap \Link_{\B_n^m}(\sigma(\Theta))) \ar[d, "\text{inclusion}"]\\
			& \widetilde{H}_{n-j}(\BX^{m, \sigma(\Theta)}_n(R), \Link_{\B_n^m(R)}(\sigma(\Theta)))
			\ar[d, "{\partial_{(\Theta, C_\Theta)}}"]
			\ar[luu, "\text{connecting map}"] \ar[dd, "g_\Theta", bend left=80] \\
			& \widetilde{H}_{n-j-1}(\Link_{\B_n^m}(\sigma(\Theta)), \B(C_{\sigma(\Theta)})) \ar[d, "{\on{span}_{(\Theta, C_\Theta)}}"]\\
			& \widetilde{H}_{n-1}(T_n^m(R))
		\end{tikzcd}
		$$
		Note that on the right vertical composition of this diagram is exactly the definition of $G_{j+1} \circ U_{j+1}$ on the summand $\widetilde{H}_{n-j-2}(\Link_{\B^m_n(R)}(\sigma(\Theta^\dag)))$. Hence the claim follows.
	\end{proof}
	\section{Slope-1 homological stability}
	\label{sec:slope-1-homological-stability}
In this section, we describe how the slope-1 homological stability result, \autoref{theoremA}, follows from \cref{corollary:coinvariants-of-steinberg} and \cref{corollary:coinvariants-of-relative-steinberg}. This section of the paper is very similar to parts of \cite{galatiuskupersrandalwilliams2018cellsandfinite}, relies on the $E_k$-cellular approach to homological stability develop in \cite{galatiuskupersrandalwilliams2021cells} and is almost identical to parts of \cite{kupersmillerpatzt2022}. Because we do not assume that $R$ is a principle ideal domain as in the setting of \cite{kupersmillerpatzt2022}, we include the details of technical arguments in the \cref{sec:appendix} to show how their work carries over to our setting (i.e.\ using \cref{assumption:R} only).  We refer the reader to \cite{kupersmiller2018,galatiuskupersrandalwilliams2021cells,galatiuskupersrandalwilliams2018cellsandfinite,galatiuskupersrandalwilliams2019cellsandmcg,galatiuskupersrandalwilliams2020cellsandinfinite,kupersmillerpatzt2022} for the required background on $E_k$-cells.

\subsection{$E_1$-homology and the Charney module}
\label{subsec:e1-homology-and-charneys-splitting-complex}

Suppose that $R$ satisfies \cref{assumption:R}. Let $\GL(R)$ denote the groupoid with set of objects given by the natural numbers $\N$ and automorphisms of $n$ given by $\GL_n(R)$. The block sum operation $\oplus$ and the swap automorphisms $R^m \oplus R^n \to R^n \oplus R^m$ yield a symmetric monoidal structure $(\GL(R), \oplus, 0)$, with the property that $r: \GL(R) \to \N: n \mapsto n$ is a symmetric monoidal functor and $r^{-1}(0) = 0$. We are hence exactly in the setting of \cite[Section 17.1]{galatiuskupersrandalwilliams2021cells}. Furthermore, $\GL_0(R)$ is the trivial group and the block sum map $- \oplus - : \GL_m(R) \times \GL_n(R) \to \GL_{m+n}(R)$ is injective, so \cite[Assumption 17.1 and 17.2]{galatiuskupersrandalwilliams2021cells} hold. It follows from \cite[Corollary 17.5, Lemma 17.10 and Remark 17.11]{galatiuskupersrandalwilliams2021cells} and \cref{lemma:splitting-complex-vs-splitting-poset} that the $E_1$-homology of the $E_\infty$-algebra $\on{BGL}(R) = \bigsqcup_{n \in \N} \on{BGL}_n(R)$ can be interpreted in terms of the equivariant homology of the following version of a splitting complex first studied by Charney \cite{charney1980}.

\begin{definition}
	\label{definition:e1-splitting-complex}
	Let $\bS_n^{E_1}(R)$ denote the poset whose elements are pairs of nonzero free submodules $(P,Q)$ with $P \oplus Q=R^n$, where $(P,Q) \leq (P',Q')$ if $P \subseteq P'$ and $Q' \subseteq Q$. Let $S_n^{E_1}(R)$ denote the geometric realization of $\bS_n^{E_1}(R)$.
\end{definition}

Charney considered a slight variant of this complex for general Dedekind domains, where the assumption that $P$ and $Q$ are free is not included, and proved that these complexes are spherical \cite[Theorem 1.1]{charney1980}. In this case, their equivariant homology measures the $E_1$-homology of an $E_\infty$-algebra built out of automorphisms of (not necessarily free) projective modules.

Note that $\bS_n^{E_1}(R)$ is $(n-2)$-dimensional if \cref{assumption:R} holds, and that for principal ideal domains $R$ the complex in \cref{definition:e1-splitting-complex} agrees with Charney's \cite{charney1980}. This leads us to the following definition.

\begin{definition}
	\label{def:charney-module}
	Let $R$ be a ring such that $\bS_n^{E_1}(R)$ is $(n-2)$-spherical. Then the \emph{Charney module} of $R^n$ is the right $\GL_n(R)$-module $\on{Ch}_n(R):= \widetilde H_{n-2}(S_n^{E_1}(R); \Z)$.
\end{definition}

\begin{remark}
	\label{remark:charney-module-is-e1-steinberg}
	The  Charney module in \cref{def:charney-module} is an example of an \emph{$E_1$-Steinberg module} as introduced in \cite[Definition 17.6]{galatiuskupersrandalwilliams2021cells}, compare with \cref{lemma:splitting-complex-vs-splitting-poset}.  For this reason, it is frequently referred to as the $E_1$-Steinberg module or split Steinberg module of the $E_\infty$-algebra $\on{BGL}(R)$ in the literature \cite{galatiuskupersrandalwilliams2021cells, galatiuskupersrandalwilliams2018cellsandfinite, galatiuskupersrandalwilliams2020cellsandinfinite, kupersmillerpatzt2022}.
\end{remark} 

 The first part of the next proposition extends Charney's connectivity theorem \cite[Theorem 1.1]{charney1980} to our setting. The second part is a vanishing result for the coinvariants of the Charney modules and is obtained using \cref{corollary:coinvariants-of-steinberg} and \cref{corollary:coinvariants-of-relative-steinberg}. These two results constitute the computational input required by the $E_k$-cellular approach to homological stability to prove our slope-1 homological stability theorem.

\begin{proposition}
	\label{proposition:computational-input-split-steinberg}
	If $R$ satisfies \autoref{assumption:R}, then $\bS_n^{E_1}(R)$ is $(n-2)$-spherical for all $n \geq 0$. Moreover, if $n \geq 2$, then $(\on{Ch}_n(R) \otimes \K)_{\GL_n(R)} \cong 0$ for every commutative ring $\K$ in which $2 \in \K^\times$ is a unit.
\end{proposition}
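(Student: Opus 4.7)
The plan is to prove the two claims separately; both follow a template that has been made standard by \cite{galatiuskupersrandalwilliams2018cellsandfinite, kupersmillerpatzt2022}, and I would defer the detailed verifications to \cref{appendix:standard-connectivity-estimate-and-coinvariants-of-the-charney-module} (where they appear as \cref{freeCharney} and \cref{SSTvanish}). The main-text argument reduces to an outline of the strategy.

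For the sphericity of $\bS_n^{E_1}(R)$ I would induct on $n$. The dimension bound $\dim \bS_n^{E_1}(R) \leq n-2$ is immediate from \cref{ibn-I}: a chain $(P_0,Q_0) < \cdots < (P_k,Q_k)$ in $\bS_n^{E_1}(R)$ induces strict inclusions $P_0 \subsetneq \cdots \subsetneq P_k$ of free summands of $R^n$ with ranks strictly between $0$ and $n$, forcing $k \leq n-2$, and equality is easily arranged. For $(n-3)$-connectivity I would employ the forgetful poset map
\[
\pi \colon \bS_n^{E_1}(R) \to \bT_n(R), \qquad (P,Q) \mapsto P.
\]
This map is surjective by \cref{lemma:complements-are-free}, and using \cref{lemma:intersection-yields-complement} one identifies the preimages of the upper and lower intervals in $\bT_n(R)$ above a given $P$ with joins of smaller Tits complexes and smaller $E_1$-splitting complexes. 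The former are Cohen--Macaulay by \cref{lemma:span-map}, the latter are spherical of the correct dimension by the induction hypothesis, and the connectivity criterion \cite[Proposition 4.1]{kupersmillerpatzt2022} of van der Kallen--Looijenga with weight $t(P) := \rk(P) - 1$ then produces the desired estimate, in the exact same fashion as in the proof of \cref{lemma:span-map}.

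For the vanishing of coinvariants, I would stratify the top-dimensional chains of $\bS_n^{E_1}(R)$ by the rank $k$ of their smallest term. This produces a $\GL_n(R)$-equivariant surjection onto $\on{Ch}_n(R)$ from a direct sum of modules induced from tensor products of (relative) Steinberg modules attached to the rank-$k$ and rank-$(n-k)$ pieces. For the interior strata ($2 \leq k \leq n-2$) the Levi factor $\GL_k(R) \times \GL_{n-k}(R)$ contains a basis transposition on either side that negates the corresponding split apartment class, so these summands vanish in the $\K$-coinvariants once $2$ is a unit, by the Church--Farb--Putman trick used in \cref{corollary:coinvariants-of-steinberg}. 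The boundary strata at $k = 1$ or $k = n-1$ are the delicate case: there neither Levi factor suffices to flip orientations, and one must instead identify the contribution with a module built from the relative Steinberg module $\St^1_{n-1}(R)$ (respectively $\St^{n-1}_1(R)$) and its parabolic stabilizer, whose coinvariants vanish by \cref{corollary:coinvariants-of-relative-steinberg}.

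The main obstacle is precisely this boundary step: before the present work, vanishing of $(\St^m_n(R) \otimes \K)_{\GL^m_n(R)}$ was known only under the much stronger hypothesis that the complexes of augmented partial frames $\BA_n(R)$ be Cohen--Macaulay, a condition that is known to fail for many Euclidean rings (see the discussion following \cref{theorem:generators}). Our \cref{theorem:generators} and \cref{corollary:coinvariants-of-relative-steinberg} are exactly what makes the argument go through under the weaker \cref{assumption:R}; the remainder is bookkeeping in the style of \cite[Section 18]{galatiuskupersrandalwilliams2021cells} and \cite[Section 5]{kupersmillerpatzt2022}.
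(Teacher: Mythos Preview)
Your outline misses the key technical ingredient and misidentifies the structure of the fibers and the filtration quotients.

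For sphericity, the lower fibers of your map $\pi\colon (P,Q)\mapsto P$ are \emph{not} joins of smaller Tits and splitting complexes. The fiber $\pi_{\leq P}$ is the poset $\bS^{E_1}(\cdot\subseteq P,\cdot\mid R^n)$ of splittings $(P',Q')$ with $P'\subseteq P$, and there is no join decomposition of this object. The paper (following Charney and \cite[Theorem~4.8]{kupersmillerpatzt2022}) handles this by a two-step reduction: first \emph{cutting down} (\cref{cuttingDown}) identifies $\bS^{E_1}(\cdot\subseteq V,W\subseteq\cdot\mid R^n)$ with $\bS^{E_1}(\cdot\subseteq V,\cdot\mid C)$ for a complement $C$ of $W$, and then a \emph{dualizing} step (\cref{lemma:dualizing-argument-R-vs-Rop}) identifies this with $\bS^{E_1}(\cdot,V^{\circ}\subseteq\cdot\mid C^{\vee})$, a relative splitting complex over $R^{op}$ of strictly smaller rank. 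This is why the proof simultaneously inducts over $R$ and $R^{op}$ and requires \cref{theorem:relation-to-Rop} to know that $R^{op}$ also satisfies \cref{assumption:R}. Your sketch never invokes $R^{op}$, and without the dualizing trick there is no way to close the induction.

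For the coinvariants, the filtration coming from \cite[Theorem~4.1]{galatiuskupersrandalwilliams2018cellsandfinite} has graded pieces
\[
F_q/F_{q+1}\;\cong\;\bigoplus_{\rank U=n-q-1}\St(U)\otimes \on{Ch}(R^n,U)\otimes\K,
\]
i.e.\ Steinberg tensored with \emph{relative Charney modules}, not with relative Steinberg modules. Your proposed treatment of the ``interior strata'' by a Levi transposition acting on a $\St_k\otimes\St_{n-k}$ factor does not apply: one must instead prove $(\on{Ch}(R^n,U)\otimes\K)_{\GL(R^n,\fix U)}=0$ by a separate induction (\cref{SSTvanish}), which again uses cutting down, dualizing, and the induction hypothesis for $R^{op}$. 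Only at the base of that second induction, when $\rank U=n-1$, does $\on{Ch}(R^n,U)$ coincide with the relative Steinberg module $\St(R^n,U)$, and \cref{corollary:coinvariants-of-relative-steinberg} enters there. The top quotient $F_{-1}/F_0\cong\St_n(R)\otimes\K$ is handled by \cref{corollary:coinvariants-of-steinberg}. Your ``stratify top-dimensional chains by the rank of their smallest term'' is also off: every maximal chain has smallest term of rank~$1$, so that stratification is trivial.
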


\begin{proof}
	\cite[Theorem 4.8]{kupersmillerpatzt2022} proves this claim if $R$ is Euclidean and under different connectivity assumptions. However, their argument remains valid if $R$ satisfies \cref{assumption:R}. This uses the fact that $R^{op}$ also satisfies \cref{assumption:R} by the virtue of \cref{theorem:relation-to-Rop}, as well as the results established in \cref{lemma:span-map}, \cref{corollary:coinvariants-of-steinberg} and \cref{corollary:coinvariants-of-relative-steinberg}. To make this article self-contained, we explain how one can adapt their argument to our setting in \cref{appendix:standard-connectivity-estimate-and-coinvariants-of-the-charney-module}.
\end{proof}

\subsection{Proof of the stability theorem}
We are now ready to prove \cref{theoremA}.

\begin{proof}[Proof of \cref{theoremA}]
	Let $\N_{> 0}$ denote the positive integers. The framework of \cite[Section 17.1]{galatiuskupersrandalwilliams2021cells} and the discussion in \cite[Section 18.1]{galatiuskupersrandalwilliams2021cells} gives a functorially defined $\N_{> 0}$-graded non-unital $E_\infty$-algebra $\mathbf{R}_{\K}$ in the category of simplicial $\K$-modules with $H_{n,i}(\mathbf{R}_{\K})=H_{i}(\GL_n(R);\K)$. Here $H_{n,i}$ means homology in homological	degree $i$ and grading $n$. Using that $\bS_n^{E_1}(R)$ is $(n-2)$-spherical by the first part of \cref{proposition:computational-input-split-steinberg}, it follows from \cite[Lemma	18.2, Corollary 17.5, Lemma 17.10, and Remark 17.11]{galatiuskupersrandalwilliams2021cells} and \cref{lemma:splitting-complex-vs-splitting-poset} that the derived $E_1$-indecomposables of $\mathbf{R}_{\K}$ can be identified with the homology of the general linear groups with coefficients in the Charney modules (see also \cite[Proposition 2.3]{kupersmillerpatzt2022}),
	\[
		H_{n,i}^{E_1}(\mathbf{R}_{\K}) \cong H_{i-n+1}(\GL_n(R);\on{Ch}_n(R) \otimes \K).
	\]
	In particular, $H_{n,i}^{E_1}(\mathbf{R}_{\K}) \cong 0$ for $i \leq n-2$ for degree reasons. Invoking the second part of \cref{proposition:computational-input-split-steinberg} and using the assumption that $2 \in \K^\times$ is a unit, it follows that $H_0(\GL_n(R);\on{Ch}_n(R) \otimes \K) \cong 0$ if $n \geq 2$. Thus, $H_{n,n-1}^{E_1}(\mathbf{R}_{\K}) \cong 0$ for $n \geq 2$. Therefore \cite[Proposition 5.1]{kupersmillerpatzt2022} implies \cref{theoremA}: the conclusion of \cite[Proposition 5.1]{kupersmillerpatzt2022} concerns the vanishing of the homology of a simplicial $\K$-module $\overline{\mathbf{R}}_{\K}/\sigma$ and, by definition, $H_{n,i}(\overline{\mathbf{R}}_{\K}/\sigma) =	H_i(\GL_n(R), \GL_{n-1}(R);\K)$. The vanishing of these relative homology groups is equivalent to the homological stability result claimed in \cref{theoremA}.
\end{proof}
	
	\appendix
	\section{Additional details}
	\label{sec:appendix}

\subsection{(Relative) Tits complexes}

In this section we verify the identifications used in arguments involving the Tits complexes associated to $R$, \cref{def:tits-building}. These are simple adaptations of well-known results to our setting.

\begin{lemma}
	\label{upperlowerTits}
	Let $R$ satisfy \cref{assumption:R} and $V, V' \in \bT_n(R)$ such that $V \subsetneq V'$. Then
	\begin{enumerate}
		\item \label{item-upper-fiber-tits-complex} $\bT_n(R)_{> V} \cong \bT(R^n/V) \cong \bT_{n - \rank(V)}(R)$;
		\item \label{item-lower-fiber-tits-complex} $\bT_n(R)_{< V} = \bT(V) \cong \bT_{\rank(V)}(R)$;
		\item \label{item-interval-tits-complex} $\bT_n(R)_{(V, V')} = \bT(V')_{> V} \cong \bT_{\rank(V') - \rank(V)}(R)$.
	\end{enumerate}
\end{lemma}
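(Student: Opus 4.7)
The plan is to prove each of the three isomorphisms by constructing explicit order-preserving bijections and appealing to the linear-algebraic facts collected in \cref{lemma:complements-are-free} and \cref{lemma:summand-property}, which rely on the rank and Hermite conditions guaranteed by \cref{assumption:R}.

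For \cref{item-lower-fiber-tits-complex}, I would observe that the inclusion $\bT(V) \hookrightarrow \bT_n(R)_{<V}$ is already a bijection: any nonzero proper free summand $W$ of $V$ is automatically a nonzero proper free summand of $R^n$ strictly contained in $V$, since $V$ itself is a free summand of $R^n$. Conversely, \cref{lemma:summand-property} shows that any free summand $W$ of $R^n$ with $W \subsetneq V$ is also a free summand of $V$. The poset structures agree by definition, and picking an $R$-linear isomorphism $V \cong R^{\rank(V)}$ yields the second identification.

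For \cref{item-upper-fiber-tits-complex}, I would define the bijection via the quotient map $\pi \colon R^n \twoheadrightarrow R^n/V$. Given $W \in \bT_n(R)_{>V}$, \cref{lemma:summand-property} yields a splitting $W = V \oplus U$ with $U$ free, and \cref{lemma:complements-are-free} produces a free complement $W^c$ of $W$ in $R^n$; together these give $R^n = V \oplus U \oplus W^c$, whence $W/V \cong U$ is a nonzero proper free summand of $R^n/V$. Conversely, given $\overline{W} \in \bT(R^n/V)$, lifting any splitting $R^n/V = \overline{W} \oplus \overline{W}^c$ through $\pi$ produces a decomposition $R^n = V \oplus \widetilde{W} \oplus \widetilde{W}^c$, and then $\pi^{-1}(\overline{W}) = V \oplus \widetilde{W}$ is a free summand of $R^n$ strictly between $V$ and $R^n$. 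These constructions are manifestly mutually inverse and preserve order. The second identification follows because $R^n/V$ is free of rank $n - \rank(V)$ by \cref{lemma:complements-are-free}.

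For \cref{item-interval-tits-complex}, the equality $\bT_n(R)_{(V,V')} = \bT(V')_{>V}$ is immediate: an element of the interval is a free summand of $V'$ by \cref{lemma:summand-property}, and conversely any element of $\bT(V')_{>V}$ is a free summand of $V'$, hence of $R^n$, since $V'$ itself splits off $R^n$. Applying \cref{item-upper-fiber-tits-complex} inside $V'$, after fixing an identification $V' \cong R^{\rank(V')}$, then yields the isomorphism with $\bT_{\rank(V') - \rank(V)}(R)$. The main obstacle is \cref{item-upper-fiber-tits-complex}, where one must carefully check that passing to the quotient and lifting back preserve the free-summand condition; this is exactly where the Hermite hypothesis (in the form of \cref{lemma:complements-are-free}) is essential to guarantee that the relevant complements and quotients remain free. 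The other two items then reduce to \cref{item-upper-fiber-tits-complex} combined with \cref{lemma:summand-property}.
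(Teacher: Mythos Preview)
Your proposal is correct and follows essentially the same approach as the paper: both construct the bijection in \cref{item-upper-fiber-tits-complex} via the quotient map $R^n \to R^n/V$ with inverse given by preimage, verifying well-definedness through \cref{lemma:complements-are-free} and \cref{lemma:summand-property}, and both derive \cref{item-lower-fiber-tits-complex} directly from \cref{lemma:summand-property} and \cref{item-interval-tits-complex} by combining the first two items.
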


\begin{proof}
	For \cref{item-upper-fiber-tits-complex}, we will check that $\bT_n(R)_{> V} \cong \bT(R^n/V).$ Then \cref{lemma:complements-are-free} implies that $R^n/V$ is free of rank $(n - \rank(V))$ and hence it follows that $\bT(R^n/V) \cong \bT_{n - \rank(V)}(R)$. We claim that
	$$\bT_n(R)_{> V} \to \bT(R^n/V): U \mapsto U/V \text{ and } \bT(R^n/V) \to \bT_n(R)_{> V}: W \mapsto q_V^{-1}(W)$$
	are inverse poset maps, where $q_V: R^n \to R^n/V$ denotes the projection map.
	The first map is well-defined: If $R^n = U \oplus C'$, then $R^n/V \cong  (C'\oplus U)/V \cong C'\oplus U/V$, and it follows from \cref{lemma:complements-are-free} that $U/V$ is a free summand of $R^n/V$.  The first map is order-preserving: If $U \subseteq U'$, then $U/V \subseteq U'/V$. The second map is well-defined: If $R^n/V = W \oplus D$, then \cref{lemma:complements-are-free} implies that $0 \to q_{V}^{-1}(W) \to R^n \to D \to 0$ is split and that $q_V^{-1}(W)$ is a free summand of $R^n$. The second map is order-preserving: If $W \subseteq W'$, then $q_{V}^{-1}(W) \subseteq q_{V}^{-1}(W')$. Its  easy to check that these two maps are inverses of each other. The first equality in \cref{item-lower-fiber-tits-complex} follows from \cref{lemma:summand-property}, the second identification is immediate. \cref{item-interval-tits-complex} follows from \cref{item-upper-fiber-tits-complex} and \cref{item-lower-fiber-tits-complex}.
\end{proof}

\begin{lemma}
	\label{upperlowerRelTits}
	Let $R$ satisfy \cref{assumption:R}, $m > 0$ and $V \in \bT^m_n(R)$. Then
	\begin{enumerate}
		\item \label{item-upper-fiber-relative-tits-complex} $\bT^m_n(R)_{> V} \cong \bT(R^{n+m}/V, R^m) \cong \bT^m_{n - \rank(V)}(R)$;
		\item \label{item-lower-fiber-relative-tits-complex} $\bT^m_n(R)_{< V} = \bT(V) \cong \bT_{\rank(V)}(R)$.
	\end{enumerate}
\end{lemma}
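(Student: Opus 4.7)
The plan is to adapt the proof of \cref{upperlowerTits} to the relative setting by using the same quotient/preimage bijection while tracking the extra condition of lying in a complement of $R^m$. The key preliminary observation is that every $V \in \bT^m_n(R)$ satisfies $V \cap R^m = 0$, because $V$ is a free summand of some complement $C$ of $R^m$ in $R^{m+n}$. For \cref{item-lower-fiber-relative-tits-complex}, I would show $\bT^m_n(R)_{<V} = \bT(V)$ directly: if $W \in \bT^m_n(R)_{<V}$, then $W \subsetneq V$ is a free summand of $R^{m+n}$, and \cref{lemma:summand-property} yields $W \in \bT(V)$; conversely, for $W \in \bT(V)$ one takes a complement $C$ of $R^m$ with $V$ a free summand and applies \cref{lemma:complements-are-free} to exhibit $W$ as a free summand of $C$. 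The isomorphism $\bT(V) \cong \bT_{\rank(V)}(R)$ is immediate since $V$ is free of rank $\rank(V)$.

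For \cref{item-upper-fiber-relative-tits-complex}, the strategy is to first establish $\bT^m_n(R)_{>V} \cong \bT(R^{m+n}/V, R^m)$ via the mutually inverse order-preserving maps $U \mapsto U/V$ and $W \mapsto q_V^{-1}(W)$, where $q_V \colon R^{m+n} \twoheadrightarrow R^{m+n}/V$, and then identify the target with $\bT^m_{n-\rank(V)}(R)$. Well-definedness of the target requires that $R^{m+n}/V$ is free (by \cref{lemma:complements-are-free}) and that $R^m$ maps injectively onto a free summand of $R^{m+n}/V$; both follow from $V \cap R^m = 0$ and the decomposition $R^{m+n}/V = R^m \oplus (C/V)$ for any complement $C$ of $R^m$ containing $V$.

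For the forward map, given $U \in \bT^m_n(R)_{>V}$ with $R^{m+n} = R^m \oplus C$ and $V \subsetneq U \subseteq C$ both free summands, \cref{lemma:summand-property} refines this to $C = V \oplus C'$ and $U = V \oplus U'$ with $U' \subseteq C'$ a free summand, so $U/V \cong U'$ is a free summand of $C/V \cong C'$, which is a complement of $R^m$ in $R^{m+n}/V$. For the inverse map, given $W$ a free summand of a complement $D$ of $R^m$ in $R^{m+n}/V$, the preimage $q_V^{-1}(D)$ is a complement of $R^m$ in $R^{m+n}$ containing $V$, and \cref{lemma:summand-property} provides a splitting $q_V^{-1}(D) = V \oplus \tilde D$ with $q_V \colon \tilde D \xrightarrow{\cong} D$; lifting a splitting $D = W \oplus W'$ through this isomorphism exhibits $q_V^{-1}(W) = V \oplus \tilde W$ as a free summand of $q_V^{-1}(D)$, hence of $R^{m+n}$. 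The two maps are manifestly order-preserving and mutually inverse.

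Finally, the second identification $\bT(R^{m+n}/V, R^m) \cong \bT^m_{n-\rank(V)}(R)$ follows by fixing a refined decomposition $R^{m+n} = R^m \oplus V \oplus E$ (decompose any complement $C$ of $R^m$ containing $V$ as $C = V \oplus E$ via \cref{lemma:complements-are-free}); this yields $R^{m+n}/V \cong R^m \oplus E$ as modules with a distinguished free summand $R^m$, where $E$ is free of rank $n - \rank(V)$ by the invariant basis number property. The main bookkeeping step I expect is the well-definedness of the inverse map $W \mapsto q_V^{-1}(W)$, in which one must lift a splitting from $R^{m+n}/V$ back to $R^{m+n}$; this is handled by a direct application of \cref{lemma:summand-property} to the free summand $V$ of the preimage complement $q_V^{-1}(D)$.
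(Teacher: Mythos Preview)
Your proposal is correct and uses the same underlying maps $U \mapsto U/V$ and $W \mapsto q_V^{-1}(W)$ as the paper. The only difference is one of economy: the paper observes that these maps already yield the poset isomorphism $\bT_{m+n}(R)_{>V} \cong \bT(R^{m+n}/V)$ in the absolute case (\cref{upperlowerTits}), and then simply \emph{restricts} this bijection to the relative subposets, noting that $U$ lies in a complement of $R^m$ if and only if $U/V$ does; likewise for \cref{item-lower-fiber-relative-tits-complex} it restricts the equality $\bT_{m+n}(R)_{<V} = \bT(V)$. You instead re-verify well-definedness and invertibility of the maps from scratch in the relative setting, which is more laborious but arrives at the same conclusion by the same mechanism.
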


\begin{proof}
	The first identification in \cref{item-upper-fiber-relative-tits-complex}, $\bT^m_n(R)_{> V} \cong \bT(R^{n+m}/V , R^m)$, is obtained by restriction from the poset isomorphism $\bT_{m+n}(R)_{> V} \cong \bT(R^{m+n}/V)$ constructed for \cref{item-upper-fiber-tits-complex} of \cref{upperlowerTits}. For the second isomorphism we consider a complement $L$ of $R^m$ in $R^{m+n}$ such that $V \subseteq L$. Then $L$ is free by \cref{lemma:complements-are-free}, $L = V \oplus C$ by \cref{lemma:summand-property} and we obtain a decomposition $V \oplus C \oplus R^m = R^{m+n}$, where $C$ is free of rank $(n - \rank(V))$ by \cref{lemma:complements-are-free}. Hence, $R^{m+n} /V = (V \oplus C \oplus R^m) / V \cong V/V \oplus C \oplus R^m \cong C \oplus R^m $ is free of rank $(n - \rank(V) + m)$ and therefore $\bT(R^{n+m}/V , R^m) \cong \bT^m_{n - \rank(V)}(R)$. The first equality in \cref{item-lower-fiber-tits-complex} follows from \cref{lemma:summand-property}, and the second identification is immediate.
		
	For \cref{item-lower-fiber-relative-tits-complex}, the equality $\bT^m_n(R)_{< V} = \bT(V)$, and hence the claim, follows by restriction from the equality $\bT_{m+n}(R)_{< V} = \bT(V)$ proved in \cref{item-lower-fiber-tits-complex} of \cref{upperlowerTits}.
\end{proof}

\subsection{The complex of partial bases of the opposite ring}
\label{appendix:partial-bases-of-the-opposite-ring}

In this part of the appendix, we complete the proof of \cref{theorem:relation-to-Rop} claiming that $R^{op}$ satisfies \cref{assumption:R} if $R$ does. For the first two items of \cref{assumption:R} this is contained in the literature, as explained in \cref{lemma:relation-to-Rop}. Our contribution is to show that $R^{op}$ also satisfies \cref{assumption-partial-bases} of \cref{assumption:R}, i.e.\ that $\B_n(R^{op})$ is Cohen--Macaulay. To see this, we will adapt an argument due to Sadofschi Costa \cite{sadofschicosta2020} to our setting.

We start with an several preliminary observations; the first one shows how the Tits complexes and Steinberg modules of $R$ and $R^{op}$ are related.

\begin{lemma}
	\label{lemma:comparing-the-tits-complexes-of-R-and-Rop}
	Let $R$ be a ring satisfying \cref{assumption:R} and let $M$ be a free $R$-module of rank $n$. Let $M^\vee = \Hom_R(M, R)$ be its dual $R^{op}$-module, which is free of rank $n$. There is an isomorphism of posets of Tits complexes
	$$\bT(M) \to \bT(M^\vee)^{op}: V \mapsto V^\circ$$
	mapping a summand $V$ to the summand $V^\circ = \{f \in M^\vee : f|_V = 0\}$. This isomorphism is compatible with the action of $\GL(M)$ and $\GL(M^\vee)$ on the domain and codomain if these two groups are identified using the inverse-transpose isomorphism in \cref{lemma:dualizing-and-inverse-transpose}. 
\end{lemma}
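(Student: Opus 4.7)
The plan is to verify that the annihilator construction $V \mapsto V^\circ$ lands in $\bT(M^\vee)$, defines a poset anti-isomorphism, and is equivariant under the inverse-transpose identification of $\GL(M)$ with $\GL(M^\vee)$. Throughout, I would exploit \cref{assumption:R} so that all relevant summands and their complements are free, permitting dual-basis arguments.

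First I would check well-definedness. Given $V \in \bT(M)$, pick a complement $M = V \oplus C$; by \cref{lemma:complements-are-free}, $C$ is free of rank $n - \rank V$. The splitting induces $M^\vee = V^\vee \oplus C^\vee$ as left $R^{op}$-modules, and extension by zero on $V$ identifies $C^\vee$ with $V^\circ$. Since $1 \leq \rank V \leq n-1$, the summand $V^\circ$ is nonzero and proper with free complement $V^\vee$, placing it in $\bT(M^\vee)$. Order-reversal $V \subseteq V' \Rightarrow V^\circ \supseteq (V')^\circ$ is immediate from the definition.

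Next I would construct a two-sided inverse using the canonical evaluation isomorphism $M \xrightarrow{\cong} (M^\vee)^\vee$ for finite-rank free modules (a standard consequence of fixing a basis and its dual basis). Applied symmetrically, this reduces the claim to verifying $(V^\circ)^\circ = V$ under the evaluation identification. The inclusion $V \subseteq (V^\circ)^\circ$ is tautological; for the reverse, write $m = v + c$ relative to $M = V \oplus C$, so that elements of $V^\circ \cong C^\vee$ annihilate $m$ precisely when $f(c) = 0$ for every $f \in C^\vee$. Since $C$ is free, the dual basis separates its points, forcing $c = 0$ and hence $m \in V$.

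Finally, equivariance is a direct computation once conventions are fixed: with the right-action notation from the paper, $\phi \in \GL(M)$ sends $V$ to $V\phi$, and its image $(\phi^{-1})^* \in \GL(M^\vee)$ acts on $f \in M^\vee$ by $f \circ \phi^{-1}$. For $f \in V^\circ$ and $v \in V$, the identity $(f \circ \phi^{-1})(v\phi) = f(v) = 0$ shows $V^\circ \cdot (\phi^{-1})^* \subseteq (V\phi)^\circ$; the reverse inclusion follows by writing any $g \in (V\phi)^\circ$ as $(g \circ \phi) \circ \phi^{-1}$ with $g \circ \phi \in V^\circ$. The main obstacle here is bookkeeping rather than mathematical depth: one must keep left versus right actions consistent and ensure the dual-basis identifications are applied to the correct module, but the individual module-theoretic verifications are all routine.
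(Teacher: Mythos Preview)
Your proposal is correct and follows essentially the same approach as the paper: establish well-definedness of $V\mapsto V^\circ$, use the canonical double-dual isomorphism to obtain a two-sided inverse, and verify equivariance by a direct computation. The only minor variation is that you exhibit $V^\circ$ as a free summand via the splitting $M^\vee = V^\vee \oplus C^\vee$, whereas the paper uses the short exact sequence $0\to V^\circ\to M^\vee\to V^\vee\to 0$ together with the fact that $R^{op}$ is Hermite; both arguments are valid and the remaining steps match.
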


\begin{proof}
	We first check that the map $\bT(M) \to \bT(M^\vee)^{op}: V \mapsto V^\circ$ is well-defined: Let $V \in \bT(M)$. Then restricting $R$-linear functions to the summand $V$ yields a surjection $M^\vee \twoheadrightarrow V^\vee$, whose kernel is $V^\circ$. Since $V^\vee$ is a free $R^{op}$-module of rank $\rank(V)$ and since $R^{op}$ is Hermite by \cref{lemma:relation-to-Rop}, it follows that $V^\circ$ is a free summand of rank $(n-\rank(V))$. If $V_1 \subseteq V_2$, then it obviously holds $V_2^\circ \subseteq V_1^\circ$. The map is hence a well-defined poset map.
	
	The canonical isomorphism $M \cong (M^\vee)^\vee$ (see e.g.\ \cite[Theorem 9.2]{blyth1990}) induces a poset isomorphism $\bT(M) \to \bT((M^\vee)^\vee)$ identifying $V$ and $(V^\circ)^\circ$. Since this isomorphism factors through $\bT(M^\vee)^{op}$, the poset map $V \mapsto V^\circ$ is injective. Using the canonical isomorphism $M^\vee \cong ((M^\vee)^\vee)^\vee$ one can similarly check that the poset map is surjective. To conclude that the map is an isomorphism of posets, we are left with checking that $V_1 \subseteq V_2$ if $V_2^\circ \subseteq V_1^\circ$: Let $\vec v \in V_1$. Then $f(\vec v) = 0$ for all $f \in V_2^\circ \subseteq V_1^\circ$. Hence $\on{ev}_{\vec v}(f) = 0$ for all $f \in V_2^\circ$, which means that $\on{ev}_{\vec v} \in (V_2^\circ)^\circ$. Now the canonical isomorphism implies that $\vec v \in V_2$. It follows that the map is a poset isomorphism as claimed.
	
	Finally, we check the compatibility with the group actions, i.e.\ $(V \cdot \phi)^\circ = (V^\circ)\cdot( \phi^{-1})^*$ for $V \in \bT(M)$ and $\phi \in \GL(M)$. To see this, we note that $f_0 \in (V^\circ)\cdot( \phi^{-1})^*$ if and only if $f_0 = f_1 \circ \phi^{-1}$ for $f_1 \in V^\circ$; if and only if $f_0 \circ \phi \in V^\circ$; if and only if $f_0(\phi(\vec v)) = 0$ for $\vec v \in V$; if and only if $f_0 \in (V \cdot \phi)^\circ$.
\end{proof}

\begin{corollary}
	Let $R$ be a ring satisfying \cref{assumption:R}, let $M$ be a finite rank $R$-module and consider its dual $R^{op}$-module $M^\vee$. Then $T(M^\vee)$ is Cohen--Macaulay of dimension $(n-2)$ and the apartment class map $[-]\colon \Z[\GL(M^\vee)] \to \St(M^\vee)$ is an equivariant surjection.
\end{corollary}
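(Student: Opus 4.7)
The plan is to deduce both claims by transporting the corresponding known statements for $T(M)\cong T_n(R)$ along the $\GL$-equivariant poset isomorphism $\bT(M)\xrightarrow{\cong}\bT(M^\vee)^{op}$, $V\mapsto V^\circ$, established in the preceding lemma. This transport is legitimate even though we have not yet verified that $R^{op}$ satisfies \cref{assumption:R}: the corollary only concerns properties of $\bT(M^\vee)$ as a poset with $\GL(M^\vee)$-action, and these are determined by the $R$-side via the duality.

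For the Cohen--Macaulay claim, I would first observe that the geometric realization of a poset $\bP$ coincides with that of its opposite, since a chain $v_0<\cdots<v_k$ in $\bP$ is the same data as the chain $v_k<\cdots<v_0$ in $\bP^{op}$. Thus $T(M^\vee)\cong T(M)$, and since $M\cong R^n$, this simplicial complex is Cohen--Macaulay of dimension $(n-2)$ by \cref{lemma:span-map}. The link of a $k$-simplex $V_0<\cdots<V_k$ in $\bT(M)$ decomposes as the join $|\bT(M)_{<V_0}|\ast|\bT(M)_{(V_0,V_1)}|\ast\cdots\ast|\bT(M)_{>V_k}|$, and the isomorphism $V\mapsto V^\circ$ sends this join to the link of $V_k^\circ<\cdots<V_0^\circ$ in $\bT(M^\vee)$. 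Hence the Cohen--Macaulay condition transfers verbatim.

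For the apartment class surjection, I would use \cref{lemma:dualizing-and-inverse-transpose}, which provides a group isomorphism $\GL(M)\xrightarrow{\cong}\GL(M^\vee)$, $\psi\mapsto(\psi^{-1})^*$; every $\phi\in\GL(M^\vee)$ thus arises from a unique $\psi$. After fixing $M\cong R^n$, if $\vec\psi_1,\ldots,\vec\psi_n$ are the columns of $\psi$ with dual basis $f_1,\ldots,f_n$, then $\phi$ sends the standard dual basis to $f_1,\ldots,f_n$, and a direct computation yields $\langle\vec\psi_i:i\in S\rangle^\circ=\langle f_j:j\notin S\rangle$ for every $S\subseteq\llbracket n\rrbracket$. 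Consequently, under $V\mapsto V^\circ$, the apartment poset embedding of $\psi$ into $\bT(M)$ corresponds to that of $\phi$ precomposed with the complementation automorphism of $\bT(\llbracket n\rrbracket)$. That automorphism induces multiplication by $\pm1$ on $\widetilde H_{n-2}$, since it is a self-homeomorphism of a sphere. Thus the homology isomorphism $\St(M)\xrightarrow{\cong}\St(M^\vee)$ induced by $V\mapsto V^\circ$ sends $[\psi]$ to $\pm[\phi]$.

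Since $\{[\psi]:\psi\in\GL(M)\}$ generates $\St(M)$ by \cref{theorem:apartment-classes-generate-steinberg} applied to $M\cong R^n$, the set $\{[\phi]:\phi\in\GL(M^\vee)\}$ generates $\St(M^\vee)$. Equivariance of the apartment class map on the $M^\vee$-side follows from the $\GL$-equivariance of the poset isomorphism. The only mild obstacle in writing this out is the bookkeeping with the complementation involution and the resulting sign, but since this sign is uniform across apartments it is immaterial for the generation statement.
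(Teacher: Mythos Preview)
Your proposal is correct and follows essentially the same approach as the paper: both arguments transport the Cohen--Macaulay property and the apartment-class surjection from $\bT(M)$ to $\bT(M^\vee)$ via the duality isomorphism $V\mapsto V^\circ$ of \cref{lemma:comparing-the-tits-complexes-of-R-and-Rop} together with the inverse-transpose group isomorphism of \cref{lemma:dualizing-and-inverse-transpose}, invoking \cref{lemma:span-map} and \cref{theorem:apartment-classes-generate-steinberg} on the $R$-side. The paper packages this as a commutative square
\[
\begin{tikzcd}
\Z[\GL(M)] \arrow[rr, "{((-)^{-1})^*}", "\cong"'] \arrow[d, two heads, "{[-]}"] && \Z[\GL(M^\vee)] \arrow[d, "{[-]}"]\\
\St(M) \arrow[rr, "{(-)^{\circ}_*}", "\cong"'] && \St(M^\vee)
\end{tikzcd}
\]
and simply asserts its commutativity; your more explicit analysis of the complementation involution on $\bT(\llbracket n\rrbracket)$ shows that this square actually commutes only up to a global sign $\pm 1$ (depending on $n$), a point the paper glosses over. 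As you correctly note, this sign is uniform and hence irrelevant for the surjectivity conclusion, so both arguments are complete.
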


\begin{proof}
	The claim follows from \cref{lemma:dualizing-and-inverse-transpose}, \cref{lemma:span-map}, \cref{theorem:apartment-classes-generate-steinberg}, \cref{lemma:comparing-the-tits-complexes-of-R-and-Rop} and the resulting commutative diagram
	\begin{center}
		\begin{tikzcd}
			\Z[\GL(M)] \arrow[rr, "{((-)^{-1})^*}", "\cong"'] \arrow[d, two heads, "{[-]}" near start] && \Z[\GL(M^\vee)] \arrow[d, "{[-]}" near start]\\
			\St(M) \arrow[rr, "{(-)^{\circ}_*}", "\cong"'] && \St(M^\vee)
		\end{tikzcd}
	\end{center}
\end{proof}

We now start working towards establishing a relation between the complexes of partial bases of $R$ and $R^{op}$. \cref{lemma:comparing-the-tits-complexes-of-R-and-Rop} shows that we can compare summands of a free $R$-module $M$ to summands of the $R^{op}$-module $M^\vee$, even though it is not clear how unimodular vectors in the former can be related to unimodular vectors in the latter. The idea is therefore to pass to certain simplicial complexes, which are built out of summands but ``remember'' the homotopy type of the complex of partial bases. This is the role of the following two complexes.

\begin{definition}
	\label{definition:frame-and-coframe-complexes}
	Let $M$ be a free $R$-module of finite rank $n$. A \emph{frame} in $M$ is a set $\{L_1, \dots, L_n\}$ of rank-1 summands in $M$ such that there exists a basis $\{\vec v_1, \dots, \vec v_n\}$ of $M$ with $L_i = \langle \vec v_i \rangle$. We denote by $\on{F}(M)$ the complex of partial frames, whose $k$-simplices are size-$(k+1)$ subsets of frames. We write $\on{F}_n(R)$ for $\on{F}(R^n)$. A \emph{co-frame} in $M$ is a set $\{C_1, \dots, C_n\}$ of rank-$(n-1)$ summands in $M$ with the property that there exists a basis $\{v_1, \dots, v_n\}$ of $M$ such $C_i = \langle \vec v_j : j \neq i \rangle$. We denote by $\on{coF}(M)$ the complex of partial co-frames, whose $k$-simplices are size-$(k+1)$ subsets of co-frames. We write $\on{coF}_n(R) = \on{coF}(R^n)$.
\end{definition}

Complexes of frames related to those in \cref{definition:frame-and-coframe-complexes} have previously been considered, see e.g.\ \cite{churchfarbputman2019} and \cite{kupersmillerpatzt2022}. The complex of co-frames $\on{coF}_n(R)$ can be seen as an analogue of the simplicial complex $Y_H$ defined and studied by Hatcher--Vogtmann in the context of free groups, see \cite[Remark after Corollary 3.4 and Theorem 2.4]{hatchervogtmann1998}. Hatcher--Vogtmann's complex plays a key role in Sadofschi Costa's work \cite{sadofschicosta2020}.

In the next few lemmas, we make precise in which sense these complexes ``remember'' the homotopy type of the complex of partial bases. An important tool for this is Hatcher--Wahl's notion of a complete join complex, see \cite[Definition 3.2 and Example 3.3]{hatcherwahl2010}.

\begin{lemma}
	\label{lemma:complete-join-complex}
	Let $Y$ be a complete join complex over a simplicial complex $X$ of dimension $d$. Then $Y$ is Cohen--Macaulay of dimension $d$ if and only if $X$ is Cohen--Macaulay of dimension $d$.
\end{lemma}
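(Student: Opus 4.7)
The plan is to address the two implications separately. The forward direction, that $X$ Cohen--Macaulay of dimension $d$ implies $Y$ Cohen--Macaulay of dimension $d$, is essentially the content of \cite[Proposition 3.5]{hatcherwahl2010}. The key inputs are that for the defining simplicial map $\pi \colon Y \to X$ and any simplex $\tau$ of $Y$, the link $\on{Link}_Y(\tau)$ is itself a complete join complex over $\on{Link}_X(\pi(\tau))$, and that a complete join complex over an $n$-connected simplicial complex is $n$-connected. Combined with the observation that $\dim Y = \dim X = d$ (since $\pi$ is injective on simplices and surjective on vertex sets), an induction on $d$ concludes.

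For the backward direction, I would exploit the elementary observation that any complete join complex admits a simplicial section. Choose an arbitrary vertex $s(x) \in \pi^{-1}(x)$ for each vertex $x$ of $X$. The complete join axiom guarantees that for every simplex $\{x_0, \ldots, x_k\}$ of $X$ the set $\{s(x_0), \ldots, s(x_k)\}$ is a simplex of $Y$, so $s$ extends to a simplicial embedding $X \hookrightarrow Y$ with $\pi \circ s = \on{id}_X$. In particular $X$ is a topological retract of $Y$, and therefore $X$ inherits $(d-1)$-connectivity from $Y$. For the link condition, note that for any $p$-simplex $\tau$ of $X$, the section $s$ restricts to a simplicial section $\on{Link}_X(\tau) \to \on{Link}_Y(s(\tau))$ of $\pi$, exhibiting the former as a retract of the latter. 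Since $Y$ is Cohen--Macaulay of dimension $d$, $\on{Link}_Y(s(\tau))$ is $(d-p-2)$-connected, and this connectivity passes to $\on{Link}_X(\tau)$ by the retract argument. Combined with the fact that $\dim X = d$ forces $\dim \on{Link}_X(\tau) = d-p-1$, this shows $X$ is Cohen--Macaulay of dimension $d$.

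The points requiring verification are that $s$ really restricts to a map between links (immediate since $s(\tau) \cup s(\sigma) = s(\tau \cup \sigma)$ is a simplex of $Y$ whenever $\tau \cup \sigma$ is a simplex of $X$) and that topological retracts preserve connectivity (standard, since $\pi_k$ is a functor and $\pi_*$ has a section induced by $s_*$). Neither step poses a genuine obstacle; the entire substantive input is the known forward direction from \cite{hatcherwahl2010}.
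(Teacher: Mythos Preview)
Your argument is correct and follows essentially the same route as the paper: cite \cite[Proposition 3.5]{hatcherwahl2010} for the forward direction, then build a simplicial section $s$ of $\pi$ to exhibit $X$ and each $\Link_X(\tau)$ as retracts of $Y$ and $\Link_Y(s(\tau))$. One small imprecision: the assertion ``$\dim X = d$ forces $\dim \Link_X(\tau) = d-p-1$'' only gives the inequality $\leq$; the paper secures equality by observing (via \cite{hatcherwahl2010}) that $\Link_Y(s(\tau))$ is itself a complete join over $\Link_X(\tau)$, hence has the same dimension $d-p-1$.
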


\begin{proof}
	Hatcher--Wahl show that $Y$ is Cohen--Macaulay of dimension $d$ if $X$ is, see \cite[Proposition 3.5]{hatcherwahl2010}. Assume that $Y$ is Cohen--Macaulay of dimension $d$ and consider the natural ``forgetting labels'' map $\pi\colon Y \to X$. Since $Y$ is a complete join complex over $X$, we must have $\dim(X) = d$. Picking a preimage $s(x) \in \pi^{-1}(x)$ for every vertex $x$ of $X$ yields a simplicial map $s\colon X \to Y$ such that $\pi \circ s = id_{X}$, i.e.\ it exhibits $X$ as a retract of $Y$. It follows that $X$ is $(d-1)$-connected. Let $\Delta$ be any simplex in $X$ and let $\Delta'$ be a simplex of $Y$ of dimension $\dim (\Delta)$ such that $\pi(\Delta') = \Delta$ (for example, $\Delta' = s(\Delta)$). Then $\Link_Y(\Delta')$ is a complete join complex over $\Link_{X}(\Delta)$, see e.g.\ the last paragraph of the proof of \cite[Proposition 3.5]{hatcherwahl2010}. In particular, $\Link_{X}(\Delta)$ is of dimension $(d - \dim(\Delta) - 1)$. By assumption $\Link_Y(\Delta')$ is $(d - \dim(\Delta') - 2)$-connected, and since $\Link_X(\Delta)$ is a retract of this space it has to be $(d - \dim(\Delta) - 2)$-connected.
\end{proof}

\begin{corollary}
	\label{corollary:partial-bases-is-a-complete-join-complex}
	If $R$ satisfies \cref{assumption-invariant-basis} and \cref{assumption-stably-free} of \cref{assumption:R}, then $\B_n(R)$ is a complete join complex over $\on{F}_n(R)$. In particular, $R$ satisfies \cref{assumption-partial-bases} of \cref{assumption:R} if and only if for all $n \geq 0$ the complex of frames $\on{F}_n(R)$ is Cohen--Macaulay.
\end{corollary}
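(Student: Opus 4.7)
The plan is to exhibit the natural span map $\pi\colon \B_n(R) \to \on{F}_n(R)$, defined on vertices by $\vec v \mapsto \langle \vec v \rangle$, as a complete join projection in the sense of Hatcher--Wahl. Once this is established, the ``in particular'' clause follows directly from \cref{lemma:complete-join-complex}: both $\B_n(R)$ and $\on{F}_n(R)$ have dimension $n-1$ by \cref{corollary:assumption-consequence-1} together with the fact that $\pi$ is surjective on simplices.

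First I would check that $\pi$ is a well-defined simplicial map that is injective on each simplex. Extending a partial basis $\{\vec v_0,\dots,\vec v_k\}$ to a full basis realizes $\{\langle \vec v_0 \rangle,\dots,\langle \vec v_k \rangle\}$ as a subset of a frame, and pairwise distinctness of the $\langle \vec v_i \rangle$ is forced by linear independence of the $\vec v_i$.

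The main obstacle---and the only substantive content---is the complete join property: for any partial frame $\{L_0,\dots,L_k\}$ and any choice of unimodular vectors $\vec v_i \in \pi^{-1}(L_i)$, the set $\{\vec v_0,\dots,\vec v_k\}$ is itself a partial basis. The key lemma is a rigidity statement: if $\vec v$ is a vertex of $\B_n(R)$ and $\langle \vec v\rangle \subseteq \langle \vec w\rangle$ for another vertex $\vec w$, then $\vec v = u \vec w$ for a unit $u \in R^\times$. To prove this I would combine \cref{lemma:summand-property} (making $\langle \vec v\rangle$ a free summand of $\langle \vec w\rangle \cong R$), \cref{lemma:complements-are-free} (so the complement $C$ is free), and then apply \cref{ibn-III} to the resulting decomposition $R \cong R \oplus C$ to force $C = 0$, hence $\langle \vec v \rangle = \langle \vec w \rangle$; the fact that $\vec w = u' \vec v$ and $\vec v = u\vec w$ then makes $u, u'$ mutually inverse. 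Note that \cref{ibn-III} is available here because \cref{assumption-invariant-basis} together with \cref{assumption-stably-free} imply it via \cite[Theorem 2.7]{cohn1966}.

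To close the complete join argument, I would pick a basis $\{\vec w_0,\dots,\vec w_{n-1}\}$ realizing the frame containing $\{L_0,\dots,L_k\}$, with $L_i = \langle \vec w_i \rangle$, and apply the rigidity lemma component-wise to obtain $\vec v_i = u_i \vec w_i$ for units $u_i \in R^\times$. Then $\{\vec v_0,\dots,\vec v_k,\vec w_{k+1},\dots,\vec w_{n-1}\}$ is obtained from the basis $\{\vec w_0,\dots,\vec w_{n-1}\}$ by a diagonal change of coordinates by units, hence is again a basis of $R^n$; this exhibits $\{\vec v_0,\dots,\vec v_k\}$ as a partial basis projecting to $\{L_0,\dots,L_k\}$, completing the verification.
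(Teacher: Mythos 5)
Your proof is correct and follows the same complete-join strategy as the paper. The only difference in presentation is that the paper dispatches the verification in one line by invoking Hatcher--Wahl's labeled-complex construction (their Example 3.3), taking the ``labels'' of a frame line $L$ to be the unimodular vectors spanning $L$; you instead spell out the complete-join property directly via the rigidity lemma (unimodular $\vec v$ with $\langle \vec v \rangle \subseteq \langle \vec w \rangle$ forces $\vec v = u\vec w$ for $u \in R^\times$, using \cref{lemma:summand-property}, \cref{lemma:complements-are-free} and \cref{ibn-III}), and then close by noting that rescaling the vectors of a basis by units produces another basis. This is exactly the content that makes Hatcher--Wahl's labeled-complex framework apply, so you have filled in the step the paper leaves implicit; the substance is identical.
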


\begin{proof}
	The map $\B_n(R) \to \on{F}_n(R)$ that sends a partial basis $\{\vec v_1, \dots, \vec v_k\}$ to the partial frame $\{\langle \vec v_1 \rangle, \dots, \langle \vec v_k \rangle\}$ exhibits $\B_n(R)$ as a complete join complex on $\on{F}_n(R)$. Indeed, in the language of \cite[Example 3.3]{hatcherwahl2010}, the set of labels of a vertex $L \in \on{F}_n(R)$ is exactly the set of unimodular vectors contained in $L$. The claim then follows from \cref{lemma:complete-join-complex}.
\end{proof}

We also record the following simple observation, which is frequently used in subsequent induction arguments.

\begin{corollary}
	\label{corollary:links-in-B-are-complete-join-complexes}
	Let $m \geq 0$ and $n \geq 1$ and view $R^n$ as a summand in $R^{m+n} = R^m \oplus R^n$. If $R$ satisfies \cref{assumption-invariant-basis} and \cref{assumption-stably-free} of \cref{assumption:R}, then $B^m_n(R)$ is a complete join complex over $B_n(R)$. In particular, $B^m_n(R)$ is Cohen--Macaulay of dimension $(n-1)$ for all $m \geq 0$ if and only if $\B_n(R)$ is Cohen--Macaulay of dimension $(n-1)$.
\end{corollary}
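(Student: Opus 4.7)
My plan is to produce a natural surjective simplicial map $\pi \colon B_n^m(R) \to B_n(R)$ and verify that it exhibits $B_n^m(R)$ as a complete join complex over $B_n(R)$ in the sense of \cite[Example 3.3]{hatcherwahl2010}. The map $\pi$ will be induced by the quotient $R^{m+n} = R^m \oplus R^n \twoheadrightarrow R^n$: a vertex $\vec v \in B_n^m(R)$ can be written uniquely as $\vec v = \sum_{l=1}^m r_l \vec e_l + \vec w$ with $\vec w \in R^n$, and I set $\pi(\vec v) = \vec w$. First I would check that $\pi$ is well-defined on vertices: since $\{\vec e_1,\ldots,\vec e_m,\vec v\}$ extends to a basis of $R^{m+n}$ and $R$ is Hermite with IBN (so that by \cref{lemma:hermite-partial-basis-and-unimodular-sequences} a partial basis is exactly a linearly independent set with free complement), quotienting by $R^m$ sends such an extension to a basis of $R^n$ containing $\vec w$, which implies $\vec w \in B_n(R)$.

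Next I would establish the two properties of a complete join complex over $B_n(R)$. For injectivity on simplices, I would observe that if $\{\vec v_0,\ldots,\vec v_k\}$ is a simplex of $B_n^m(R)$ with $\pi(\vec v_i) = \pi(\vec v_j)$ for some $i\neq j$, then $\vec v_i - \vec v_j = \sum_l r_l \vec e_l$ is a nontrivial $R$-linear relation among $\{\vec e_1,\ldots,\vec e_m,\vec v_0,\ldots,\vec v_k\}$, contradicting linear independence of partial bases. For the completeness condition I take a simplex $\{\vec w_0,\ldots,\vec w_k\}$ of $B_n(R)$ and arbitrary lifts $\vec v_i = \sum_l r_{i,l} \vec e_l + \vec w_i \in \pi^{-1}(\vec w_i)$. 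I need to show $\{\vec e_1,\ldots,\vec e_m,\vec v_0,\ldots,\vec v_k\}$ is a partial basis of $R^{m+n}$. By \cref{lemma:hermite-partial-basis-and-unimodular-sequences}, the partial basis $\{\vec w_0,\ldots,\vec w_k\}$ of $R^n$ extends to a full basis $\{\vec w_0,\ldots,\vec w_k,\vec u_1,\ldots,\vec u_{n-k-1}\}$ of $R^n$. Using the split short exact sequence $0 \to R^m \to R^{m+n} \to R^n \to 0$, the union $\{\vec e_1,\ldots,\vec e_m,\vec v_0,\ldots,\vec v_k,\vec u_1,\ldots,\vec u_{n-k-1}\}$ (where each $\vec v_i$ is a lift of $\vec w_i$) is then a basis of $R^{m+n}$, which proves the claim.

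For the ``in particular'' statement, I would first record that both complexes are $(n-1)$-dimensional: $B_n(R)$ is of dimension $n-1$ by \cref{lemma:hermite-partial-basis-and-unimodular-sequences}, and $B_n^m(R)$ has dimension at most $n-1$ (vertices of a $k$-simplex give $m+k+1 \leq m+n$ elements in a partial basis of $R^{m+n}$) and at least $n-1$ (the image of any maximal simplex lifts via the completeness condition established above). Then the equivalence of the Cohen--Macaulay properties follows immediately from \cref{lemma:complete-join-complex} applied to $Y = B_n^m(R)$ and $X = B_n(R)$.

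The main technical point will be the lifting argument in the completeness step, which crucially uses the Hermite hypothesis to guarantee that a partial basis of $R^n$ extends to a full basis; the remaining manipulations are straightforward applications of the splitting $R^{m+n} = R^m \oplus R^n$. I do not anticipate any serious obstacle, as the whole argument is in the same spirit as \cref{corollary:partial-bases-is-a-complete-join-complex}.
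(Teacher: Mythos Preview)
Your proposal is correct and follows essentially the same approach as the paper: both use the projection $R^{m+n}\twoheadrightarrow R^n$ (setting the first $m$ coordinates to zero) to exhibit $B_n^m(R)$ as a complete join complex over $B_n(R)$ in the sense of \cite[Example 3.3]{hatcherwahl2010}, and then invoke \cref{lemma:complete-join-complex}. The paper's write-up is terser, simply asserting that the label set of $\vec w\in B_n(R)$ is $\{\vec w+\sum_{i=1}^m a_i\vec e_i : a_i\in R\}$, whereas you spell out the injectivity-on-simplices and completeness checks; one minor remark is that your appeals to \cref{lemma:hermite-partial-basis-and-unimodular-sequences} for extending a partial basis to a full basis are unnecessary, since this holds by the very definition of a partial basis.
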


\begin{proof}
	If $\vec v \in R^{m+n}$, we denote by $\vec v' = \vec v - (\sum_{i = 1}^{m} a_i \vec e_i)$ the vector obtained by setting the first $m$ coordinates $(a_1, \dots, a_m)$ of $\vec v$ to zero. The map $\B_n^m(R) \to \B_n(R)$ that sends a partial basis $\{\vec v_1, \dots, \vec v_k\}$ to the partial basis $\{\vec v_1', \dots, \vec v_k'\}$ exhibits $\B_n^m(R)$ as a complete join complex on $\B_n(R)$. Indeed, in the language of \cite[Example 3.3]{hatcherwahl2010}, the set of labels of a vertex $\vec v' \in \B_n(R)$ is exactly the set of vectors $\{\vec v' + (\sum_{i = 1}^{m} a_i \vec e_i): a_i \in R\}$ whose last $n$ coordinates agree with those of $\vec v'$. The claim then follows from \cref{lemma:complete-join-complex}.
\end{proof}

In the next step, we relate the frame complexes of $R$ to the co-frame complexes of $R^{op}$.

\begin{lemma}
	\label{lemma:coframe-complex-is-cm}
	Let $R$ be a ring satisfying \cref{assumption:R} and let $M$ be a free $R$-module of rank $n$. Let $M^\vee = \Hom_R(M, R)$ be its dual $R^{op}$-module, which is free of rank $n$. The following map is a simplicial isomorphism:
	$$(-)^\circ : \on{F}(M) \to \on{coF}(M^\vee): \{L_1, \dots, L_k\} \mapsto \{L_1^\circ, \dots, L_k^\circ\},$$
	where $L_i^\circ$ is the $R^{op}$-module of $R$-linear functions $f\colon M \to R$ with $f|_{L_i} = 0$.
	In particular, $\on{coF}(M^\vee)$ is Cohen--Macaulay of dimension $(n-1)$.
\end{lemma}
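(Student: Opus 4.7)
The plan is to verify by a direct dual-basis construction that $(-)^\circ$ is a simplicial bijection, using the canonical isomorphism $M \cong M^{\vee\vee}$ as $R$-modules, and then to transfer the Cohen--Macaulay property of $\on{F}(M)$ (known by Corollary~\ref{corollary:partial-bases-is-a-complete-join-complex}) across this bijection.

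For well-definedness, given a partial frame $\{L_1, \dots, L_k\}$, by definition it extends to a full frame $\{L_1, \dots, L_n\}$ arising from a basis $\{\vec v_1, \dots, \vec v_n\}$ of $M$ with $L_i = \langle \vec v_i\rangle$. Passing to the dual basis $\{\vec v_1^*, \dots, \vec v_n^*\}$ of $M^\vee$, a short calculation shows $L_i^\circ = \langle \vec v_j^* : j \neq i\rangle$, which exhibits $\{L_1^\circ, \dots, L_n^\circ\}$ as a coframe in $M^\vee$; restricting to the first $k+1$ elements yields the desired partial coframe. The canonical isomorphism $M \cong M^{\vee\vee}$ already invoked in the proof of Lemma~\ref{lemma:comparing-the-tits-complexes-of-R-and-Rop} gives $(L_i^\circ)^\circ = L_i$, which shows both that the $L_i^\circ$ are pairwise distinct (so the map sends $k$-simplices to $k$-simplices) and that the map is injective on vertex sets. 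For surjectivity I would run the same dual-basis recipe in reverse: a partial coframe $\{C_1, \dots, C_k\}$ extends to a coframe $\{C_1, \dots, C_n\}$ coming from a basis $\{f_1, \dots, f_n\}$ of $M^\vee$ with $C_i = \langle f_j : j \neq i\rangle$, and the corresponding dual basis $\{\vec w_1, \dots, \vec w_n\}$ of $M^{\vee\vee} \cong M$ produces a frame $\{\langle \vec w_1\rangle, \dots, \langle \vec w_n\rangle\}$ in $M$ mapping onto $\{C_1, \dots, C_n\}$.

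For the Cohen--Macaulay conclusion, Corollary~\ref{corollary:partial-bases-is-a-complete-join-complex} combined with Assumption~\ref{assumption:R} tells us that $\on{F}_n(R)$ is Cohen--Macaulay of dimension $(n-1)$; choosing a basis of $M$ gives a simplicial isomorphism $\on{F}(M) \cong \on{F}_n(R)$, and the established simplicial isomorphism $(-)^\circ$ then transfers this property to $\on{coF}(M^\vee)$. The main subtlety I anticipate is being careful about left vs.\ right module conventions when invoking dual bases over the noncommutative ring $R$ (since $M^\vee$ is naturally a right $R$-module, equivalently a left $R^{op}$-module), but this is routine bookkeeping rather than a substantial difficulty.
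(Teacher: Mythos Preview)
Your proposal is correct and follows essentially the same approach as the paper's proof: both use dual bases to verify well-definedness, the canonical isomorphism $M \cong M^{\vee\vee}$ (with $(L^\circ)^\circ = L$) for injectivity, and the reverse dual-basis construction for surjectivity, then transfer the Cohen--Macaulay property from $\on{F}(M)$ via Corollary~\ref{corollary:partial-bases-is-a-complete-join-complex}. The only minor point the paper makes explicit that you leave implicit in your ``short calculation'' is that the equality $L_i^\circ = \langle \vec v_j^* : j \neq i\rangle$ (rather than merely $\supseteq$) uses that $R^{op}$ is weakly finite, which is available by Lemma~\ref{lemma:relation-to-Rop}.
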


\begin{proof}
	The map is well-defined: If $\{\vec v_1, \dots, \vec v_k\}$ is a partial basis such that $L_i = \langle \vec v_i \rangle$, we can extend it to a basis $\{\vec v_1, \dots, \vec v_n\}$ and can consider the dual basis $\{\vec v_1^*, \dots, \vec v_n^*\}$ of $M^\vee$. It then holds that $\vec v_j^* \in L_i^\circ$ if and only if $j \neq i$. Since $\rank L_i^\circ = n-1$ and $R^{op}$ satisfies \cref{ibn-III}, it follows that $L_i^\circ = \langle \vec v_j^* : j \neq i \rangle$ and hence that $\{L_1^\circ, \dots, L_k^\circ\}$ is indeed a simplex in $\on{coF}(M^\vee)$. The canonical isomorphism $M \cong (M^\vee)^\vee$ (see e.g.\ \cite[Theorem 9.2]{blyth1990}) identifies $L_i \cong (L_i^\circ)^\circ$ and induces a simplicial isomorphism $\on{F}(M) \cong \on{F}((M^\vee)^\vee)$. Since this isomorphism factors through $\on{coF}(M^\vee)$, it follows that the map is injective. Similarly, one can use the canonical isomorphism $M^\vee \cong ((M^\vee)^\vee)^\vee$ to check that the map is surjective. To conclude that the map is a simplicial isomorphism, we need to see that $\{L_1, \dots, L_k\}$ is a simplex in $\on{F}(M)$ if $\{L_1^\circ, \dots, L_k^\circ\}$ is a simplex in $\on{coF}(M^\vee)$. This holds because any basis witnessing that $\{L_1^\circ, \dots, L_k^\circ\}$ is a simplex in $\on{coF}(M^\vee)$ is the dual basis $\{\vec v_1^*, \dots, \vec v_n^*\}$ of some basis $\{\vec v_1, \dots, \vec v_n\}$ of $M$ (see e.g. \cref{lemma:dualizing-and-inverse-transpose}). If $L_i^\circ = \langle \vec v_j^* : j \neq i \rangle$, then it holds that $f(\vec v_i) = 0$ for every $f \in L_i^\circ$. Therefore $\vec v_i \in L_i$, hence $L_i = \langle \vec v_i \rangle$ and $\{L_1, \dots, L_k\}$ is a simplex in $\on{F}(M)$. This completes the proof.
\end{proof}

Let $R$ be a ring satisfying \cref{assumption:R} and $M$ be a free $R$-module of rank $n$. We note that there are two interesting poset maps:
The first one is a spanning map for frames,
$$\on{span}: \on{\mathbb{F}}(M)^{(n-2)} \to \bT(M): \{L_1, \dots, L_k\} \mapsto \oplus_1^k L_i,$$
which is $(n-2)$-connected by exactly the same argument as in \cref{lemma:span-map} using \cref{corollary:partial-bases-is-a-complete-join-complex}.
The second one is a co-spanning map for co-frames,
$$\on{cospan}: \on{\mathbb{coF}}(M^\vee)^{(n-2)} \to \bT(M^\vee)^{op}: \{C_1, \dots, C_k\} \mapsto \cap_1^k C_i.$$
Note that this map is well-defined: If $\{\vec v_1^*, \dots, \vec v_n^*\}$ is a basis of $M^\vee$ such that $C_i = \langle \vec v_j^* : j \neq i \rangle$, then it holds that $\cap_1^k C_i = \langle \vec v_j^* : j \notin \{1, \dots, k\} \rangle$ is a free summand of rank $n-k$ with complement $\langle \vec v_j^* : 1 \leq j \leq k \rangle$. The next lemma relates these two maps.

\begin{lemma}
	\label{lemma:cospanning-map-is-highly-connected}
	Let $R$ be a ring satisfying \cref{assumption:R} and let $M$ be a free $R$ module of rank $n$. Then the spanning and co-spanning map fit into a commutative diagram:
	\begin{center}
		\begin{tikzcd}
			\on{\mathbb{F}}(M)^{(n-2)} \arrow[r, "{(-)^{\circ}}", "\cong"'] \arrow[d, "\on{span}"] & \on{\mathbb{coF}}(M^\vee)^{(n-2)} \arrow[d, "\on{cospan}"]\\
			\bT(M) \arrow[r, "{(-)^{\circ}}", "\cong"'] & \bT(M^\vee)^{op}
		\end{tikzcd}
	\end{center}
	In particular, the map $\on{cospan}$ is $(n-2)$-connected.
\end{lemma}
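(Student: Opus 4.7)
The plan is to verify the diagram commutes by a direct annihilator calculation, then deduce the connectivity of $\on{cospan}$ by transporting the known connectivity of $\on{span}$ across the two horizontal isomorphisms.

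First I would check commutativity of the square. Fix a simplex $\{L_1, \dots, L_k\}$ in $\on{\mathbb{F}}(M)^{(n-2)}$. Going down then right sends this to $(\oplus_{i=1}^k L_i)^\circ$, while going right then down sends it to $\cap_{i=1}^k L_i^\circ$. These two sets of $R$-linear functions $f\colon M \to R$ agree: by definition $f \in (\oplus_i L_i)^\circ$ iff $f$ vanishes on every vector in $\oplus_i L_i$, which holds iff $f|_{L_i} = 0$ for every $i$, i.e.\ iff $f \in \cap_i L_i^\circ$. This calculation uses only the linearity of $f$ and the definition of the direct sum; no ring-theoretic assumptions enter here. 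Hence the diagram commutes on vertices, and since all four maps are poset morphisms, it commutes on all chains.

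Next I would identify the horizontal arrows as isomorphisms (of posets, which at the level of geometric realizations means homeomorphisms, since opposite posets have identical order-complexes: a chain in $\mathbb{P}$ is the same underlying subset as a chain in $\mathbb{P}^{op}$). The top map $(-)^\circ\colon \on{\mathbb{F}}(M)^{(n-2)} \to \on{\mathbb{coF}}(M^\vee)^{(n-2)}$ is an isomorphism by \cref{lemma:coframe-complex-is-cm}, restricted to the appropriate skeleta, and the bottom map $(-)^\circ\colon \bT(M) \to \bT(M^\vee)^{op}$ is an isomorphism by \cref{lemma:comparing-the-tits-complexes-of-R-and-Rop}.

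For the connectivity conclusion I would invoke the observation recorded just before the lemma: the spanning map $\on{span}\colon \on{\mathbb{F}}(M)^{(n-2)} \to \bT(M)$ is $(n-2)$-connected by the same argument as \cref{lemma:span-map}, now using \cref{corollary:partial-bases-is-a-complete-join-complex} together with \cref{upperlowerTits}. Since $\on{cospan}$ fits into a commutative square with $\on{span}$ via two homeomorphisms of geometric realizations, $\on{cospan}$ is $(n-2)$-connected as well. There is no real obstacle here; the only point to be careful about is that passing from $\bT(M^\vee)^{op}$ to $\bT(M^\vee)$ does not change the realization, so connectivity transfers directly across the isomorphism.
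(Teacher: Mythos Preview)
Your proof is correct and follows essentially the same route as the paper: verify the square commutes, then transport the known $(n-2)$-connectivity of $\on{span}$ across the horizontal isomorphisms supplied by \cref{lemma:coframe-complex-is-cm} and \cref{lemma:comparing-the-tits-complexes-of-R-and-Rop}. The only cosmetic difference is that the paper checks commutativity by choosing a basis $\{\vec v_i\}$ and computing both composites explicitly as $\langle \vec v_i^* : i \notin \{1,\dots,k\}\rangle$, whereas your annihilator identity $(\oplus_i L_i)^\circ = \cap_i L_i^\circ$ reaches the same conclusion without coordinates.
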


\begin{proof}
	We only need to check that the diagram commutes. Consider a simplex $\{L_i\}_{i \in \{1, \dots, k\}}$ in $\on{\mathbb{F}}(M)^{(n-2)}$. There is a basis $\{\vec v_i\}_{i \in \{1, \dots, n\}}$ of $M$ such that $L_i = \langle \vec v_i \rangle$ for $1 \leq i \leq k$, and we write $\{\vec v_i^*\}_{i \in \{1, \dots, n\}}$ for its dual basis of $M^\vee$.	
	It follows that $\on{span}(\{L_i\}) = \langle \vec v_i : 1 \leq i \leq k \rangle$, and therefore that $\on{span}(\{L_i\})^\circ = \langle \vec v_i^* : i \notin \{1, \dots, k\} \rangle$.
	On the other hand, $(\{L_i\}_{i \in \{1, \dots, k\}})^\circ = \{L_i^\circ\}_{i \in \{1, \dots, k\}}$ with $L_i^\circ = \langle \vec v_j^* : j \neq i \rangle$,
	hence $\on{cospan}(\{L_i^\circ\}_{i \in \{1, \dots, k\}}) = \cap_{i \in \{1, \dots, k\}} L_i^\circ = \langle \vec v_i^* : i \notin \{1, \dots, k\} \rangle$ as well.	
	Since the diagram commutes, and claim about $\on{cospan}$ follows from \cref{lemma:comparing-the-tits-complexes-of-R-and-Rop} and \cref{lemma:coframe-complex-is-cm}.
\end{proof}

Following \cite{sadofschicosta2020}, we will now use the co-spanning map to study the connectivity of $\B(M^\vee)$ if $M^\vee$ has small rank. These base cases require extra care, and are used in a subsequent induction argument to establish $1$-connectedness. They are discussed in the next lemma, the proof of which is analogous to the argument for \cite[Proposition 5.9]{sadofschicosta2020}.

\begin{lemma}
	\label{lemma:relation-to-Rop-base-cases}
	Let $R$ be a ring satisfying \cref{assumption:R}, let $M$ be a free $R$-module of rank $n$ and consider its dual $R^{op}$-module $M^\vee$. Then $B(M^\vee)$ is Cohen--Macaulay of dimension $(n-1)$ if $n \in \{1, 2, 3\}$.
\end{lemma}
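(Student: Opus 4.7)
The plan is to use the reductions set up earlier in this appendix. First, \cref{corollary:partial-bases-is-a-complete-join-complex} applied to $R^{op}$ (which satisfies \cref{assumption-invariant-basis} and \cref{assumption-stably-free} of \cref{assumption:R} by \cref{lemma:relation-to-Rop}) together with \cref{lemma:complete-join-complex} reduces the desired Cohen--Macaulay property of $\B(M^\vee)$ to the same property for the frame complex $\on{F}(M^\vee)$. Applying \cref{lemma:coframe-complex-is-cm} with $M^\vee$ in place of $M$ and using the canonical identification $(M^\vee)^\vee \cong M$, I further reduce to showing that $\on{coF}(M)$ is Cohen--Macaulay of dimension $(n-1)$. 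This puts the argument back on the ``$R$-side'', where \cref{assumption:R} is available.

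The base cases $n = 1$ and $n = 2$ are then easy. For $n = 1$, $\on{coF}(M)$ consists of a single vertex (the zero submodule), so the claim is immediate. For $n = 2$, a direct unraveling of the definitions reveals that $\on{coF}(M)$ equals $\on{F}(M)$ as a simplicial complex: a partial co-frame in a rank-$2$ module is the same data as a partial frame with the roles of the indices swapped. Consequently, the claim follows from \cref{corollary:partial-bases-is-a-complete-join-complex} and \cref{lemma:complete-join-complex} applied to $\B_2(R)$, which is Cohen--Macaulay of dimension $1$ by \cref{assumption:R}.

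The case $n = 3$ is the main obstacle. Here $\on{coF}(M)$ is $2$-dimensional, and I need to verify three conditions: (i) every edge link is non-empty; (ii) every vertex link is $0$-spherical; and (iii) $\on{coF}(M)$ is $1$-connected. Condition (i) is a direct linear-algebra computation: given a partial co-frame $\{C_1, C_2\}$ with $L := C_1 \cap C_2$ a rank-$1$ summand of $M$, picking any pair of rank-$1$ complements $L_1 \subset C_2$ and $L_2 \subset C_1$ of $L$ produces a rank-$2$ summand $C_3 = L_1 \oplus L_2$ extending the partial co-frame. Condition (ii) is checked by fixing a decomposition $M = C \oplus L$, using this to describe elements of $\Link_{\on{coF}(M)}(C)$ in terms of rank-$1$ summands of $C$ together with rank-$1$ complements of $C$ in $M$, and ultimately reducing to the already-verified $n = 2$ case.

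The hardest piece is (iii). My plan is to exploit the co-spanning map $\on{cospan}\colon \on{\mathbb{coF}}(M)^{(1)} \to \bT(M)^{op}$ from \cref{lemma:cospanning-map-is-highly-connected}, which is $1$-connected, together with the fact that $\bT(M)^{op}$ is a Cohen--Macaulay graph by \cref{lemma:span-map}. Since $\bT(M)^{op}$ is one-dimensional, its fundamental group is free on a set of apartment loops coming from bases of $M$. The key geometric observation is that each such apartment loop is the image under $\on{cospan}$ of the boundary of a $2$-simplex of $\on{coF}(M)$: a basis $\{v_1, v_2, v_3\}$ produces a co-frame $\{C_1, C_2, C_3\}$ with $C_i = \langle v_j : j \neq i \rangle$, and the associated apartment is $C_1 \to C_1 \cap C_2 \to C_2 \to C_2 \cap C_3 \to C_3 \to C_1 \cap C_3 \to C_1$. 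Following the strategy of \cite[Proposition 5.9]{sadofschicosta2020}, this observation combined with the $1$-connectedness of $\on{cospan}$ will allow me to conclude that the $2$-simplices of $\on{coF}(M)$ kill every loop in $\pi_1(\on{coF}(M)^{(1)})$, so $\pi_1(\on{coF}(M)) = 0$.
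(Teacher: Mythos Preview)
Your reduction to showing that $\on{coF}(M)$ (on the $R$-side) is Cohen--Macaulay is valid, and the $n=1,2$ arguments are fine. However, the $n=3$ case has two genuine gaps.

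First, the citation of \cref{lemma:cospanning-map-is-highly-connected} is wrong. That lemma establishes $1$-connectedness of $\on{cospan}\colon \on{\mathbb{coF}}(M^\vee)^{(1)} \to \bT(M^\vee)^{op}$, not of $\on{cospan}\colon \on{\mathbb{coF}}(M)^{(1)} \to \bT(M)^{op}$. Under the isomorphism $\on{coF}(M) \cong \on{F}(M^\vee)$, your map is identified with $\on{span}\colon \on{\mathbb{F}}(M^\vee)^{(1)} \to \bT(M^\vee)$, and proving this is $1$-connected is essentially the statement you are after: its lower fibers are frame complexes $\on{F}(V^\circ)$ over $R^{op}$, whose connectivity is precisely what is unknown. (It can in fact be deduced from the already-established $n=1,2$ cases, but this requires rerunning the Quillen/van der Kallen--Looijenga argument, not merely citing the lemma.)

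Second, even granting $1$-connectedness of your $\on{cospan}$, the conclusion does not follow from knowing only that apartment loops lift to boundaries of $2$-simplices. A $1$-connected poset map gives a \emph{surjection} on $\pi_1$, not an isomorphism; Sadofschi Costa's \cite[Theorem 5.7]{sadofschicosta2020} produces a $\pi_1$-spanning set for the domain that, in addition to lifts $\gamma_i$ of a $\pi_1$-spanning set for the target, contains loops $\alpha_i$ coming from the fibres and mixed loops $\eta_{i,j}$. All of these must be shown null-homotopic in $\on{coF}(M)$, and you have only addressed the $\gamma_i$.

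The paper avoids both issues by working directly with $\on{span}\colon \bB(M^\vee)^{(1)} \to \bT(M^\vee)$ rather than passing to $\on{coF}(M)$. The point is that $\on{coF}(M^\vee)$ (not $\on{coF}(M)$) is the complex already known to be Cohen--Macaulay via \cref{lemma:coframe-complex-is-cm}; the paper uses this, together with the $1$-connected map $\on{cospan}\colon \on{\mathbb{coF}}(M^\vee)^{(1)} \to \bT(M^\vee)^{op}$, to manufacture a $\pi_1$-spanning set for $\bT(M^\vee)$ by loops that are visibly images of boundaries of $2$-simplices in $\B(M^\vee)$. It then applies \cite[Theorem 5.7]{sadofschicosta2020} to $\on{span}$ and checks null-homotopy of all three families $\gamma_i$, $\alpha_i$, $\eta_{i,j}$ in $\B(M^\vee)$, using that links of vertices in $\B(M^\vee)$ are complete join complexes over $\B_2(R^{op})$ (handled by the $n=2$ case). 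Your alternative route through $\on{coF}(M)$ could be made to work, but it would require essentially the same amount of fibre-and-link analysis, just transported through the duality isomorphisms.
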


\begin{proof}
	For the first part: $B(M^\vee)$ is nonempty, i.e.\ $(-1)$-connected, if $n = 1$ since any basis $\{\vec v_1^*\} \subseteq M^\vee$ is a vertex. 
	
	For the second part: If $n = 2$, then \cref{lemma:coframe-complex-is-cm} implies that $\on{coF}(M^\vee)$ is $0$-connected. But $\on{coF}(M^\vee) = \on{F}(M^\vee)$ if $n = 2$, and therefore $\on{F}(M^\vee)$ is Cohen--Macaulay of dimension $1$ as well. By \cref{lemma:relation-to-Rop} and \cref{corollary:partial-bases-is-a-complete-join-complex}, it follows that $B(M^\vee)$ is a complete join complex over $\on{F}(M^\vee)$ and therefore \cref{lemma:complete-join-complex} implies that $\on{B}(M^\vee)$ is Cohen--Macaulay of dimension $1$ as well.

	For the third part, the argument is similar to \cite[Proposition 5.9]{sadofschicosta2020}: We start with several preliminary observations to establish the analogue of \cite[Proposition 5.8]{sadofschicosta2020} in our setting. If $n  = 3$, then \cref{lemma:coframe-complex-is-cm} implies that $\on{coF}(M^\vee)$ is Cohen--Macaulay of dimension $2$, i.e.\ this complex is, in particular, $1$-connected. Hence the set of boundaries of barycentric subdivisions of $2$-simplices in $\on{coF}(M^\vee)$ yield a set $\{\partial \Delta_i^2\}$ of $1$-loops (in the sense of \cite[Definition 5.1]{sadofschicosta2020}) with the $\pi_1$-spanning property (in the sense of \cite[Definition 5.2]{sadofschicosta2020}) for the barycentric subdivision of the $1$-skeleton $\on{\mathbb{coF}}(M^\vee)^{(1)}$. By \cref{lemma:cospanning-map-is-highly-connected}, the co-spanning map $\on{cospan}\colon \on{\mathbb{coF}}(M^\vee)^{(1)} \to \bT(M^\vee)^{op}$ is $1$-connected and therefore induces a surjection between the fundamental groups. It follows that the image of the set of $1$-loops $\{\partial \Delta_i^2\}$ in $\on{\mathbb{coF}}(M^\vee)^{(1)}$ gives a set of $1$-loops with the $\pi_1$-spanning property for $T(M^\vee)^{op}$. For each $2$-simplex $\Delta = \{C_1, C_2, C_3\}$ in $\on{coF}(M^\vee)$, there exists by definition a basis $\{\vec v_1^*, \vec v_2^*, \vec v_3^*\}$ of $M^\vee$ such that $C_i = \langle \vec v_j^* : j \neq i \rangle$. Notice that $\Delta' = \{\vec v_1^*, \vec v_2^*, \vec v_3^*\}$ is a $2$-simplex in $\B(M^\vee)$, and that the $1$-loop obtained by applying the spanning map $\on{span} \colon \bB(M^\vee)^{(1)} \to \bT(M^\vee)$ to the boundary of its barycentric subdivision is equal to that associated to the boundary of $\Delta = \{C_1, C_2, C_3\}$ using the co-spanning map, using that the geometric realizations satisfy $T(M^\vee) = T(M^\vee)^{op}$. This shows that the set $\{\gamma_i\}$ of $1$-loops in $\bB(M^\vee)^{(1)}$ obtained from the boundaries of barycentric subdivisions of $2$-simplices of $\B(M^\vee)$ has the property that applying the spanning map $\{\on{span}(\gamma_i)\}$ yields a set of $1$-loops that has the $\pi_1$-spanning property for $\bT(M^\vee)$.
	
	Now, we consider the spanning map $\on{span} \colon \bB(M^\vee)^{(1)} \to \bT(M^\vee)$. It follows from the previous two parts that the lower fibers $\on{span}_{\leq V^\circ} = \bB(V^\circ)$ of this map are $(\rank{V^\circ} - 2)$-connected and the codomain is Cohen--Macaulay of dimension $1$ by \cref{lemma:comparing-the-tits-complexes-of-R-and-Rop}. Therefore the map is $1$-spherical in the sense of \cite[Definition 2.4]{sadofschicosta2020}, and by the first part of \cite[Theorem 5.7]{sadofschicosta2020} it follows that its domain is $1$-spherical and, in particular, $0$-connected. But this means that $\bB(M^\vee)$ is also $0$-connected, and we are left with checking that it is simply connected. To see this, we apply the second part of \cite[Theorem 5.7]{sadofschicosta2020} to construct a set $\{\gamma_i\} \cup \{\alpha_i\} \cup \{\eta_{i,j}\}$ of $1$-loops with the $\pi_1$-spanning property for $\bB(M^\vee)^{(1)}$ such that each $1$-loop is null-homotopic in $\bB(M^\vee)$. For this, let $V^\circ \in \bT(M^\vee)$. 
	
	We already constructed the $1$-loops $\{\gamma_i\}$, and we note that these $1$-loops are null-homotopic in $\bB(M^\vee)$ as boundaries of $2$-simplices. 
	
	If $\rank V^\circ = 2$, we consider an arbitrary set $\{\alpha_i\}$ of $1$-loops in the graph $\bB(V^\circ)$ with the $\pi_1$-spanning property. Since $V^\circ$ is a summand of rank $2$ in $M^\vee$, there exists some unimodular vector $\vec w^*$ such that $V^\circ \oplus \langle \vec w^* \rangle = M^\vee$. It follows that each $1$-loop $\alpha_i$ is null-homotopic in $\B(M^\vee)$ because it is contained in the contractible subcomplex $\{\vec w^*\} \ast \B(V^\circ)$.
	
	If $\rank V^\circ = 1$, we consider some arbitrary set $\{\alpha_i\}$ of $0$-loops in the discrete set $\bB(V^\circ)$ with the $\pi_0$-spanning property (in the sense of \cite[Definition 5.1 and Definition 5.2]{sadofschicosta2020}), and an arbitrary set $\{\bar\beta_j\}$ of $0$-loops in the discrete set $\bT(M^\vee)_{> V^\circ}$ with the $\pi_0$-spanning property. We now describe how to choose the $1$-loop $\eta_{i,j}$ in $\bB_3(M^\vee)$ for each pair of indices $(i,j)$: If $\alpha_i = \{\vec x_1^*, \vec x_2^*\}$ and $\bar \beta_j = \{H_1, H_2\}$, there exist unimodular vectors $\vec z_1^*$ and $\vec z_2^*$ in $M^\vee$ such that $H_1 = V^\circ \oplus \langle \vec z_1^* \rangle$ and $H_2 = V^\circ \oplus \langle \vec z_2^* \rangle$. We then define $\eta_{i,j}$ to be the $1$-loop in $B(M^\vee)$ corresponding to the edge path $\vec x_1^* \rightsquigarrow \vec z_1^* \rightsquigarrow \vec x_2^* \rightsquigarrow \vec z_2^* \rightsquigarrow \vec x_1^*$. To see that $\eta_{i,j}$ is null-homotopic one observes that $\vec z_1^*$ and $\vec z_2^*$ are both vertices in $\Link_{B(M^\vee)}(\vec x_1^*) = \Link_{B(M^\vee)}(\vec x_2^*)$. The complex $\Link_{B(M^\vee)}(\vec x_1^*) = \Link_{B(M^\vee)}(\vec x_2^*)$ is isomorphic to the complex $\B_2^1(R^{op})$ and hence, by virtue of \cref{corollary:links-in-B-are-complete-join-complexes}, it is a complete join complex over a complex isomorphic to $B_2(R^{op})$. Since $B_2(R^{op})$ is Cohen--Macaulay of dimension $1$ by the second part of this lemma (proved above), \cref{corollary:links-in-B-are-complete-join-complexes} implies that $\Link_{B(M^\vee)}(\vec x_1^*) = \Link_{B(M^\vee)}(\vec x_2^*)$ is also Cohen--Macaulay of dimension $1$ and, in particular, $0$-connected. It follows that the $1$-loop $\eta_{i,j}$ is null-homotopic in $B(M^\vee)$.
	
	Now, by \cite[Theorem 5.7]{sadofschicosta2020} we have a set of $1$-loops in $\bB(M^\vee)^{(1)}$ with the $\pi_1$-spanning property such that each $1$-loop is null-homotopic in $\bB(M^\vee)$. Hence, $\bB(M^\vee)$ is $1$-connected. The connectivity property for links of simplices in $\B(M^\vee)$ follows from \cref{corollary:links-in-B-are-complete-join-complexes} and the first two parts of this lemma (proved above).
\end{proof}

We are now ready to prove the result in full generality. The proof of the next theorem is analogous to the proof of \cite[Theorem 6.2]{sadofschicosta2020}.

\begin{theorem}
	Let $R$ be a ring satisfying \cref{assumption:R}, let $M$ be free module of rank $n$ and consider its dual $R^{op}$-module $M^\vee$. Then $\bB(M^\vee)$ is Cohen--Macaulay of dimension $n-1$.
\end{theorem}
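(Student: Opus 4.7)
The plan is to prove the theorem by induction on $n$, following the strategy of \cite[Theorem 6.2]{sadofschicosta2020}. The base cases $n \in \{1,2,3\}$ are established in \cref{lemma:relation-to-Rop-base-cases}. For the inductive step, fix $n \geq 4$ and assume $\bB_m(R^{op})$ is Cohen--Macaulay of dimension $m-1$ for every $0 < m < n$.

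First I would dispose of the link conditions. Let $\Delta$ be a $k$-simplex of $\bB(M^\vee)$ with $0 \leq k \leq n-2$. Since $R^{op}$ satisfies \cref{assumption-invariant-basis} and \cref{assumption-stably-free} of \cref{assumption:R} by \cref{lemma:relation-to-Rop}, \cref{corollary:links-in-B-are-complete-join-complexes} exhibits $\Link_{\bB(M^\vee)}(\Delta) \cong \bB^{k+1}_{n-k-1}(R^{op})$ as a complete join complex over $\bB_{n-k-1}(R^{op})$. The induction hypothesis guarantees that $\bB_{n-k-1}(R^{op})$ is Cohen--Macaulay of dimension $n-k-2$, so \cref{lemma:complete-join-complex} upgrades the link to Cohen--Macaulay of dimension $n-k-2$, and in particular $(n-k-3)$-connected.

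It then remains to show $\bB(M^\vee)$ is $(n-2)$-connected. I would study the spanning map $\on{span}\colon \bB(M^\vee)^{(n-2)} \to \bT(M^\vee)$. By \cref{lemma:comparing-the-tits-complexes-of-R-and-Rop} combined with \cref{lemma:span-map}, the codomain $\bT(M^\vee)$ is Cohen--Macaulay of dimension $n-2$. The lower fiber over $V^\circ \in \bT(M^\vee)$ is $\on{span}_{\leq V^\circ} = \bB(V^\circ)$, which by the induction hypothesis (applied to a rank-$\rank(V^\circ)$ free $R^{op}$-module, invoking \cref{lemma:dualizing-and-inverse-transpose} to view $V^\circ$ as a dual of an $R$-module) is Cohen--Macaulay of dimension $\rank(V^\circ)-1$. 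This data makes $\on{span}$ an $(n-2)$-spherical poset map in the sense of \cite[Definition 2.4]{sadofschicosta2020}; the first part of \cite[Theorem 5.7]{sadofschicosta2020} then gives that $\bB(M^\vee)^{(n-2)}$ is $(n-2)$-spherical, hence $\bB(M^\vee)$ is already $(n-3)$-connected.

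To upgrade to $(n-2)$-connectedness, I would invoke the second part of \cite[Theorem 5.7]{sadofschicosta2020} to produce a set of $(n-2)$-loops in $\bB(M^\vee)^{(n-2)}$ with the $\pi_{n-2}$-spanning property, partitioned into three families analogous to those used in the $n=3$ case of \cref{lemma:relation-to-Rop-base-cases}: apartment-type loops obtained by lifting boundaries of barycentric subdivisions of top-dimensional simplices of $\bT(M^\vee)$ along the spanning map; loops internal to lower fibers $\bB(V^\circ)$ for $V^\circ$ of small rank; and connecting loops $\eta_{i,j}$ that pair pivot vertices of a lower fiber with witnesses of adjacency in the upper fiber. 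Each family is null-homotopic in $\bB(M^\vee)$: apartment loops bound maximal $(n-1)$-simplices arising from bases lifting the given flag (using that $R^{op}$ is Hermite); fiber loops are coned off by adjoining any basis vector of a free complement of $V^\circ$ in $M^\vee$ (again using \cref{lemma:relation-to-Rop}); and connecting loops contract inside links of common vertices, which by the link analysis above are Cohen--Macaulay of dimension $n-2$, hence sufficiently connected. Combining $(n-2)$-connectedness with the link bounds completes the induction.

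The principal obstacle will be the combinatorial bookkeeping needed to construct the connecting loops $\eta_{i,j}$ in higher dimensions and to verify that each family admits an explicit null-homotopy, mirroring \cite[Proposition 5.8]{sadofschicosta2020} but with the roles of $\bB(V^\circ)$ and $\bT(M^\vee)_{>V^\circ}$ now of higher dimension; here, the link Cohen--Macaulay statement established above is exactly the input that makes the contractions possible at every stage.
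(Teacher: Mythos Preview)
Your overall induction scheme and your treatment of the link conditions match the paper. The gap is in how you pass from $(n-3)$-connectedness to $(n-2)$-connectedness for $n \geq 4$.

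The result you invoke, \cite[Theorem 5.7]{sadofschicosta2020}, is specific to the fundamental group: it supplies $\pi_1$-spanning sets of $1$-loops, not $\pi_{n-2}$-spanning sets of $(n-2)$-loops. There is no higher-dimensional analogue stated there, and your proposed construction of connecting loops $\eta_{i,j}$ in dimensions above $1$ has no counterpart in that reference. This is precisely why the paper uses Theorem~5.7 only for the $n=3$ case inside \cref{lemma:relation-to-Rop-base-cases}, where simple connectivity is genuinely the issue.

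For $n \geq 4$ the paper proceeds differently and more simply. Since $\bB(M^\vee)$ is $(n-3)$-connected with $n-3 \geq 1$, it is already simply connected, so by Hurewicz it suffices to show $\widetilde{H}_{n-2}(\bB(M^\vee)) = 0$. The paper applies \cite[Theorem 3.1]{sadofschicosta2020} (see also \cite[Theorem 9.1]{quillen1978}), a \emph{homological} rather than homotopical tool, which after verifying three compatibility conditions on the spanning map produces an explicit basis of $\widetilde{H}_{n-2}(\bB(M^\vee)^{(n-2)}) = \ker \partial_{n-2}$ consisting of apartment-type classes $\gamma_i$ together with join classes $\alpha_i \ast \beta_j$, where $\alpha_i$ lives in a lower fiber $\bB(V^\circ)$ and $\beta_j$ in a link $\Link_{\bB(M^\vee)^{(n-2)}}(\sigma)$. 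Each $\gamma_i$ is the boundary of a top-dimensional simplex; and each $\alpha_i \ast \beta_j$ is a boundary because the induction hypothesis (exactly your link analysis) forces $\beta_j$ to be null-homologous in the full link $\Link_{\bB(M^\vee)}(\sigma)$, so $\beta_j = \partial \omega_j$ and hence $\alpha_i \ast \beta_j = \partial(\alpha_i \ast \omega_j)$. No higher-dimensional connecting-loop combinatorics are needed; the switch from homotopy to homology is what makes the inductive step tractable.
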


\begin{proof}
	We argue by induction on $n$. The base cases $n \in \{1, 2, 3\}$ have already been established in \cref{lemma:relation-to-Rop-base-cases}. For the induction step $n \geq 4$ we consider the spanning map $\on{span}\colon \bB(M^\vee)^{(n-2)} \to \bT(M^\vee)$. The codomain of this map is Cohen--Macaulay of dimension $(n-2)$ by \cref{lemma:comparing-the-tits-complexes-of-R-and-Rop}, and by the induction hypothesis $\on{span}_{\leq V^\circ} = \bB(V^\circ)$ is Cohen--Macaulay of dimension $(\rank(V^\circ) - 1)$ for every $V^\circ \in \bT(M^\vee)$. Therefore, it follows from the first part of \cite[Theorem 3.1]{sadofschicosta2020}, see also \cite[Theorem 9.1]{quillen1978}, that $\bB(M^\vee)^{(n-2)}$ is $(n-2)$-spherical. This means that $\bB(M^\vee)$ is $(n-3)$-connected, and in particular $1$-connected since $n \geq 4$. Hence, we only need to check that $\widetilde{H}_{n-2}(\bB(M^\vee)) = 0$. To see this, we want to apply the second part of \cite[Theorem 3.1]{sadofschicosta2020} to construct a basis $\{\gamma_i\} \cup \{\alpha_i \ast \beta_j\}$ of $\widetilde{H}_{n-2}(\bB(M^\vee)^{(n-2)}) = \ker(\partial_{n-2})$ such that each basis element is a boundary in $\bB(M^\vee)$ and therefore $\widetilde{H}_{n-2}(\bB(M^\vee)) = \ker(\partial_{n-2})/\im(\partial_{n-1}) \cong 0$.
	
	We first check that the three conditions stated in \cite[Theorem 3.1]{sadofschicosta2020} are satisfied: The first condition states that $\langle \sigma_1 \rangle \subseteq \langle \sigma_2 \rangle$ implies that $\Link_{\bB(M^\vee)^{(n-2)}}(\sigma_2) \subseteq \Link_{\bB(M^\vee)^{(n-2)}}(\sigma_1)$. This holds because \cref{lemma:relation-to-Rop}, \cref{lemma:summand-property} and the assumption imply that $\langle \sigma_2 \rangle = \langle \sigma_1 \rangle \oplus C$ for some free $R^{op}$-module $C$ and if $\tau \in \Link_{\bB(M^\vee)^{(n-2)}}(\sigma_2)$, then $\langle \tau \rangle \oplus \langle \sigma_2 \rangle = \langle \tau \rangle \oplus \langle \sigma_1 \rangle \oplus C$  is a free summand of $M^\vee$, so $\langle \tau \rangle \oplus \langle \sigma_1 \rangle$ is a free summand of $M^\vee$ (with free complement) and therefore $\tau \in \Link_{\bB(M^\vee)^{(n-2)}}(\sigma_1)$. The second condition states that if $\langle \sigma_1 \rangle \subseteq \langle \sigma_2 \rangle$ and $\langle \tau_1 \rangle \subseteq \langle \tau_2 \rangle$, then $\langle \sigma_1 \cup \tau_1 \rangle \subseteq \langle \sigma_2 \cup \tau_2 \rangle$ assuming $\sigma_1 \cup \tau_1$ and $\sigma_2 \cup \tau_2$ are simplices in $\bB(M^\vee)^{(n-2)}$. This is obviously true. The third condition states that for every $V^\circ \in \bT(M^\vee)$ and $\sigma \in \bB(M^\vee)^{(n-2)}$ with $\langle \sigma \rangle = V^\circ$, it holds that the map obtained by restriction $\bB(M^\vee)^{(n-2)}_{> \sigma} \to \bT(M^\vee)_{> V^\circ}$ is a surjection on the top homology. It follows from \cref{lemma:comparing-the-tits-complexes-of-R-and-Rop} and the fact that $\bT(M^\vee)_{> V^\circ} \cong \bT(M^\vee/V^\circ)$, see \cref{item-upper-fiber-tits-complex} of \cref{upperlowerTits}, that the codomain is Cohen--Macaulay. The induction hypothesis and \cref{corollary:links-in-B-are-complete-join-complexes} imply that $\on{span}_{\leq W^\circ} \cong \bB_{\rank(W^\circ) - \rank(V^\circ)}^{\rank(V^\circ)}(R^{op})$ is also Cohen--Macaulay. Therefore, the first part of \cite[Theorem 3.1]{sadofschicosta2020} shows that the third condition does indeed hold. We may hence apply the second part of \cite[Theorem 3.1]{sadofschicosta2020} to construct a basis for $\widetilde{H}_{n-2}(\bB(M^\vee)^{(n-2)})$.
	
	We now apply the second part of \cite[Theorem 3.1]{sadofschicosta2020} to construct a basis for $\widetilde{H}_{n-2}(\bB(M^\vee)^{(n-2)})$, and check that each of the elements in this basis is trivial in $\widetilde{H}_{n-2}(\bB(M^\vee))$: By \cref{lemma:comparing-the-tits-complexes-of-R-and-Rop}, we can choose the classes $\{\gamma_i\}$ in $\widetilde{H}_{n-2}(\bB(M^\vee)^{(n-2)})$ to be a collection of boundaries of barycentric subdivisions of top-dimensional simplices in $\B(M^\vee)$ with the property that $\{\on{span}(\gamma_i)\}$ is a basis of $\widetilde{H}_{n-2}(\bT(M^\vee)) = \St(M^\vee)$ consisting of apartment classes. In particular, $\gamma_i$ is a boundary in $\bB(M^\vee)$ and hence it holds that $\gamma_i = 0 \in \widetilde{H}_{n-2}(\bB(M^\vee))$. Now let $V^\circ \in \bT(M^\vee)$ and $\sigma \in \bB(M^\vee)^{(n-2)}$ with $V^\circ = \langle \sigma \rangle$ be arbitrary. Let $\{\alpha_i\}$ be a basis of $\widetilde{H}_{\rank(V^\circ) - 1}(\on{span}_{\leq V^\circ})$ and let $\{\beta_j\}$ be a collection of classes in $\widetilde{H}_{(n-2) - (\rank(V^\circ) - 1) - 1}(\Link_{\bB(M^\vee)^{(n-2)}}(\sigma))$ such that $\{\on{span}(\beta_j) \oplus V^\circ\}$ is a basis of $\widetilde{H}_{(n-2) - (\rank(V^\circ) - 1) - 1}(\bT(M^\vee)_{> V^\circ})$. We need to see that the classes $\{\alpha_i \ast \beta_j\}$ in $\widetilde{H}_{n-2}(\on{span}_{\leq V^\circ} \ast \Link_{\bB(M^\vee)^{(n-2)}}(\sigma))$ are trivial in $\widetilde{H}_{n-2}(\bB(M^\vee))$. By the induction hypothesis and \cref{corollary:links-in-B-are-complete-join-complexes}, it holds that $\Link_{\B(M^\vee)}(\sigma) \cong \B^{\rank(V^\circ)}_{n - \rank(V^\circ)}(R^{op})$ is Cohen--Macaulay of dimension $(n - \rank(V^\circ) - 1)$. Therefore, the classes $\beta_j$ are trivial in $\widetilde{H}_{(n-2) - (\rank(V^\circ) - 1) - 1}(\Link_{\bB(M^\vee)}(\sigma))$. Hence, there exists a chain $\omega_j$ in $C_{n-\rank(V^\circ) - 1}(\Link_{\bB(M^\vee)}(\sigma))$ such that $\partial_{n - \rank(V^\circ) - 1}(\omega_j) = (-1)^{|\alpha_i|} \beta_j$ and it follows that $\alpha_i \ast \beta_j$ is a boundary: $\partial_{n-1} (\alpha_i \ast \omega_j) = \alpha_i \ast \beta_j$. This completes the proof.	
\end{proof}

\subsection{Relating Charney's splitting complex to the $E_1$-splitting complex}
In this part of the appendix, we spell out the relation between the splitting complex $\bS_n^{E_1}(R)$ introduced in \cref{definition:e1-splitting-complex} and the $E_1$-splitting complex introduced in \cite[Definition 17.9]{galatiuskupersrandalwilliams2021cells} to make this work self-contained. Similar complexes and this relation have been studied by Hepworth in \cite{hepworth2020} for principle ideal domains. All of the arguments presented here are easy adaptations of results proved in \cite{hepworth2020}, and we do not claim any novelty.

\begin{lemma}
	\label{lemma:splitting-complex-vs-splitting-poset}
	Let $R$ be a ring satisfying \cref{assumption:R}. For every $n \in \N$ Charney's splitting complex $S_n^{E_1}(R)$ is $\GL_n(R)$-equivariantly isomorphic to the $n$-th $E_1$-splitting complex associated to the $E_\infty$-algebra $\on{BGL}(R)$ by \cite[Definition 17.9]{galatiuskupersrandalwilliams2021cells}.
\end{lemma}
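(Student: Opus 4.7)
The plan is to unfold the definition of the $E_1$-splitting complex from \cite[Definition 17.9]{galatiuskupersrandalwilliams2021cells} when applied to the symmetric monoidal groupoid $(\GL(R), \oplus, 0)$, and then exhibit an explicit $\GL_n(R)$-equivariant isomorphism between the resulting object and the nerve of the poset $\bS^{E_1}_n(R)$ from \cref{definition:e1-splitting-complex}. Under the conventions established just before \cref{definition:e1-splitting-complex}, a $p$-simplex in the $n$-th $E_1$-splitting complex of $\on{BGL}(R)$ corresponds to a choice of tuple $(a_0, \dots, a_{p+1})$ of positive integers summing to $n$, together with a coset in $\GL_n(R)/(\GL_{a_0}(R) \times \cdots \times \GL_{a_{p+1}}(R))$, where the product of general linear groups is embedded via block sum.

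Next I would show that under \cref{assumption:R} these cosets are $\GL_n(R)$-equivariantly in bijection with ordered direct-sum decompositions $R^n = W_0 \oplus \cdots \oplus W_{p+1}$ into nonzero free summands of ranks $a_0, \dots, a_{p+1}$. For surjectivity, given any such decomposition, the Hermite property combined with \cref{lemma:complements-are-free} lets one choose an $R$-module basis of each $W_i$, which assembles into a $g \in \GL_n(R)$ whose $g$-image of the standard block decomposition recovers $R^n = W_0 \oplus \cdots \oplus W_{p+1}$. Injectivity holds because two bases for each $W_i$ differ by an element of $\GL_{a_i}(R)$, so the associated elements $g, g' \in \GL_n(R)$ differ by the corresponding block-diagonal subgroup.

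Then I would identify this simplicial set of ordered free decompositions with the nerve of $\bS^{E_1}_n(R)$ via the map sending $R^n = W_0 \oplus \cdots \oplus W_{p+1}$ to the chain
\[
(P_0, Q_0) < (P_1, Q_1) < \cdots < (P_p, Q_p),
\]
where $P_i = W_0 \oplus \cdots \oplus W_i$ and $Q_i = W_{i+1} \oplus \cdots \oplus W_{p+1}$. This is a $\GL_n(R)$-equivariant bijection whose inverse reconstructs the summands from a chain by setting $W_i = P_i \cap Q_{i-1}$, using the conventions $P_{-1} = 0$ and $Q_{p+1} = 0$; \cref{lemma:intersection-yields-complement} guarantees that the resulting summands are free and assemble to a direct sum decomposition of $R^n$. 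The face maps on the GKRW side consist of merging adjacent summands $W_i$ and $W_{i+1}$, which on the Charney side precisely corresponds to deleting the $i$-th entry of the chain, so the simplicial structures match.

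The main obstacle will be the bookkeeping required to line up the face maps and the $\GL_n(R)$-actions between the two presentations; once this is done, the identification is essentially formal. This is precisely the strategy carried out by Hepworth in \cite{hepworth2020} for principal ideal domains, and the only ring-theoretic inputs needed are the Hermite property and the invariant basis number property, both of which are provided by \cref{assumption:R} and guarantee that cosets of block-diagonal subgroups match ordered free decompositions of $R^n$.
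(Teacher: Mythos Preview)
Your proposal is correct and follows essentially the same approach as the paper. The paper's proof is more compressed: it cites \cite[Proposition 3.4]{hepworth2020} to identify $\bS_n^{E_1}(R)$ with Hepworth's splitting poset $\on{SP}_n$ (noting that the two places where Hepworth uses the PID hypothesis are covered in our setting by \cref{definition:e1-splitting-complex} and \cref{lemma:intersection-yields-complement}), and then cites \cite[Proposition 6.4]{hepworth2020} to identify the nerve of $\on{SP}_n$ with the $E_1$-splitting complex of \cite{galatiuskupersrandalwilliams2021cells}; you have simply unpacked these two citations into an explicit description of simplices as ordered free decompositions and their correspondence with chains in $\bS_n^{E_1}(R)$, using the same ring-theoretic inputs.
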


\begin{proof}
	As explained in \cite[Example 2.4]{hepworth2020}, the family $\{\GL_n(R)\}_{n \in \N}$ is a family of groups with multiplication in the sense of \cite[Definition 2.1]{hepworth2020} by virtue of the symmetric monoidal structure $(\GL(R), \oplus, 0)$ introduced in \cref{subsec:e1-homology-and-charneys-splitting-complex}. Replacing the PID assumption in \cite[Proposition 3.4]{hepworth2020} by \cref{assumption-invariant-basis} and \cref{assumption-stably-free} of \cref{assumption:R} and the poset $S_R(R^n)$ introduced in the paragraph before \cite[Proposition 3.4]{hepworth2020} by $\bS_n^{E_1}(R)$ as in \cref{definition:e1-splitting-complex}, the proof of \cite[Proposition 3.4]{hepworth2020} applies to show that the splitting poset $\on{SP}_n$ associated to the family $\{\GL_n(R)\}_{n \in \N}$, see \cite[Definition 3.1]{hepworth2020}, is isomorphic to $\bS_n^{E_1}(R)$: the first time \cite{hepworth2020} uses that $R$ is a PID the claim holds in our setting by \cref{definition:e1-splitting-complex}, and the second time \cite{hepworth2020} uses that $R$ is a PID the claim follows from \cref{lemma:intersection-yields-complement} in our setting. By \cite[Proposition 6.4]{hepworth2020}, the semi-simplicial nerve of splitting poset $\on{SP}_n$ agrees with the $E_1$-splitting complex of \cite[Definition 17.9]{galatiuskupersrandalwilliams2021cells} using \cite[Remark 17.11]{galatiuskupersrandalwilliams2021cells}, compare with \cite[Section 1.2]{hepworth2020}.
\end{proof}

\subsection{Standard connectivity estimate and coinvariants of the Charney module}
\label{appendix:standard-connectivity-estimate-and-coinvariants-of-the-charney-module}
In this part of the appendix, we spell out the details of the proof of \cref{proposition:computational-input-split-steinberg} to make this work self-contained. The argument we present and the notions defined in this context are adaptations of the proof of \cite[Theorem 4.8]{kupersmillerpatzt2022} and the definitions in \cite[Section 4.3]{kupersmillerpatzt2022} to our setting.

\begin{definition}
	Let $V$ and $W$ be nonzero free summands of a free $R$-module $M$. Then $\bS^{E_1}(\cdot \subseteq V,W \subseteq \cdot|M)$ is the subposet of
	$\bS^{E_1}(M)$ of splittings $(U,T)$ such that $U\subseteq V$ and $W\subseteq T$. We
	write $\bS^{E_1}(\cdot, W \subseteq \cdot|M)$ when $V=M$ and
	$\bS^{E_1}(\cdot \subseteq V,\cdot|M)$ when $W=\{0\}$.
\end{definition}

We show that these $E_1$-splitting complexes are spherical using a technique due to Charney \cite{charney1980} called \emph{cutting down}. This is encapsulated in the following analogue of \cite[Lemma 4.10]{kupersmillerpatzt2022}.

\begin{lemma}\label{cuttingDown}
	Let $R$ be a ring satisfying \cref{assumption:R}, and let $M$ be a free	$R$-module. If $V,W$ are free summands of $M$ such that $V\cap W=\{0\}$ and $V\oplus W\subsetneq M$ is a proper summand, then for any complement $C$ of $W$ in $M$ containing $V$, we have an isomorphism
	$$\bS^{E_1}(\cdot\subseteq V,W\subseteq \cdot|M)\cong \bS^{E_1}(\cdot\subseteq V,\cdot|C).$$
	Moreover, this isomorphism is equivariant with respect to the subgroup of $\GL(M)$ that preserves $C$ and $V$ and fixes $W$ pointwise.
\end{lemma}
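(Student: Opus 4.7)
The plan is to exhibit an explicit isomorphism by intersecting with $C$ in one direction and taking the direct sum with $W$ in the other. Concretely, I would define
\[
\Phi \colon \bS^{E_1}(\cdot \subseteq V, W \subseteq \cdot \,|\, M) \longrightarrow \bS^{E_1}(\cdot \subseteq V, \cdot \,|\, C), \qquad (U,T) \longmapsto (U,\, T \cap C),
\]
with candidate inverse
\[
\Psi \colon \bS^{E_1}(\cdot \subseteq V, \cdot \,|\, C) \longrightarrow \bS^{E_1}(\cdot \subseteq V, W \subseteq \cdot \,|\, M), \qquad (U, T') \longmapsto (U,\, T' \oplus W).
\]

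The main technical point is verifying that $\Phi$ is well-defined, which is exactly where the hypothesis that $V \oplus W$ is a proper summand and that $C$ contains $V$ enters. Given $(U,T)$ with $M = U \oplus T$, $U \subseteq V \subseteq C$, and $W \subseteq T$, we also have the splitting $M = W \oplus C$. Applying \cref{lemma:intersection-yields-complement} to the two decompositions $M = U \oplus T = C \oplus W$ (with $U \subseteq C$ and $W \subseteq T$), we conclude that $T \cap C$ is a free $R$-module and that $C = U \oplus (T \cap C)$ and $T = (T \cap C) \oplus W$. In particular, $(U, T \cap C)$ is indeed an element of $\bS^{E_1}(\cdot \subseteq V, \cdot \,|\, C)$ (both summands are nonzero: $U \neq 0$ by definition, while $T \cap C = 0$ would force $C = U \subseteq V$ and hence $M = W \oplus V$, contradicting the properness of $V \oplus W$). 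For $\Psi$, given $(U, T')$ with $C = U \oplus T'$ and $U \subseteq V$, direct computation shows $M = U \oplus (T' \oplus W)$ with $W \subseteq T' \oplus W$, so $\Psi$ is well-defined.

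The two maps are mutually inverse: $\Phi \circ \Psi$ is the identity because $(T' \oplus W) \cap C = T'$ (using $T' \subseteq C$ and $M = C \oplus W$), while $\Psi \circ \Phi$ is the identity by the equality $T = (T \cap C) \oplus W$ established above. Both $\Phi$ and $\Psi$ are order-preserving: the partial order on both posets is $(U_1, T_1) \leq (U_2, T_2)$ iff $U_1 \subseteq U_2$ and $T_2 \subseteq T_1$, and intersecting with $C$ (respectively, summing with $W$) preserves these containments.

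Finally, for equivariance, let $g \in \GL(M)$ preserve $V$ and $C$ and fix $W$ pointwise. On both sides the action is by $(U,T) \mapsto (g(U), g(T))$, so it suffices to check $g(T) \cap C = g(T \cap C)$. The inclusion $g(T \cap C) \subseteq g(T) \cap g(C) = g(T) \cap C$ is immediate; conversely, if $x \in g(T) \cap C$ write $x = g(t)$ with $t \in T$, and since $g^{-1}$ also preserves $C$ we get $t = g^{-1}(x) \in C$, hence $t \in T \cap C$ and $x \in g(T \cap C)$. The hardest step is simply bookkeeping the application of \cref{lemma:intersection-yields-complement}; no new ideas beyond linear algebra over Hermite rings are required.
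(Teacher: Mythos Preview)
Your proof is correct and follows the same approach as the paper: both construct the isomorphism via $(U,T) \mapsto (U, T \cap C)$ with inverse $(U', T') \mapsto (U', T' \oplus W)$, invoking \cref{lemma:intersection-yields-complement} for well-definedness. The paper is marginally more careful in one spot, explicitly justifying via \cref{lemma:summand-property} and \cref{ibn-III} that an automorphism preserving $C$ must satisfy $C \cdot \phi = C$ (not merely $C \cdot \phi \subseteq C$), a point you implicitly assume when asserting that $g^{-1}$ also preserves $C$.
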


\begin{proof}
	We start by noting that $C$ is necessarily free by \cref{lemma:complements-are-free}, and that the containment $V \subsetneq C$ is necessarily proper (otherwise it would hold that $V\oplus W = M$, a contradiction). We claim that
	\begin{align*}
		\bS^{E_1}(\cdot\subseteq V,W\subseteq\cdot|M) &\leftrightarrow \bS^{E_1}(\cdot\subseteq V,\cdot|C)\\
		(U,T)& \mapsto (U,T\cap C)\\
		(U', T' \oplus W) &\mapsfrom (U',T')
	\end{align*}
	still defines a pair of \emph{well-defined} mutually inverse functors in our setting (exactly as in \cite[Lemma 4.10]{kupersmillerpatzt2022}):
	Firstly, observe that if $(U,T)\in \bS^{E_1}(\cdot \subseteq V,W\subseteq \cdot|M)$, then $U \subseteq V \subsetneq C$ and $W \subseteq T$. Hence it follows from \cref{lemma:intersection-yields-complement} that $U \oplus (T \cap C) = C$ and that $T \cap C$ is free and nonzero of rank $\rank(C) - \rank(U) > 0$, because $V$ is a proper summand of $M$.
	Secondly, the inverse to this functor sends a splitting $(U',T')$ of $C$ to $(U',T'\oplus W)$, which is a free splitting of $M$ since $C\oplus W\cong M$ and $U',T',W$ are all free.
	
	If $\phi\in\GL(M)$ preserves $C$, we must have that $C \cdot \phi = C$ (since $M = C \oplus W$, $M = (C \cdot \phi) \oplus (W \cdot \phi)$ and $C \cdot \phi \subseteq C$, \cref{lemma:summand-property} implies that $C = (C \cdot \phi) \oplus K$, and $K = 0$ because \cref{ibn-III} holds by \cref{assumption:R} and $\rank(C) = \rank(C \cdot \phi)$. Thus, $\phi$ may be restricted to an element of $\GL(C)$. Therefore, we have an action of the subgroup $\GL(M,\pres{C},\pres{V},\fix{W})$ of $\GL(M)$, consisting of maps that preserve $C$ and $V$ and fix $W$ pointwise, on both complexes. The fact that the isomorphism defined above is equivariant to this action follows from the fact that if $\phi\in \GL(M,\pres{C},\pres{V},\fix{W})$, then $(C \cdot \phi) = C$ and $(W \cdot \phi)=W$, so $(T \cap C) \cdot \phi =(T \cdot \phi)\cap C$ for any summand $T$ of $M$, and $(T' \oplus W) \cdot \phi =(T' \cdot \phi) \oplus W$ for any summand $T'$ of $C$.
\end{proof}

Since we did not assume that $R$ is commutative, we additionally need the following technical lemma relating certain relative splitting complexes associated to $R$ and $R^{op}$.

\begin{lemma}
	\label{lemma:dualizing-argument-R-vs-Rop}
	Let $R$ be a ring satisfying \cref{assumption:R}, let $C$ be a free $R$-module of finite rank and let $V$ be a free summand of $C$. Consider the associated free $R^{op}$-module $C^\vee = \Hom_R(C, R)$, which has the same rank as $C$, and its submodule $V^\circ$ of $R$-linear functions vanishing on $V$. Mapping a splitting $(U,T)$ to $(T^\circ, U^\circ)$ yields an isomorphism
	$$\bS^{E_1}(\cdot\subseteq V,\cdot|C) \cong \bS^{E_1}(\cdot,V^{\circ}\subseteq \cdot |C^{\vee}),$$
	which is compatible with the action of the subgroup $\GL(C, \pres V, \fix C/V) \leq \GL(C)$ on the left side and the action of the subgroup $\GL(C^\vee, \fix V^\circ) \leq \GL(C^\vee)$ on the right side under their identification obtained by restricting the inverse-transpose group isomorphism $\GL(C) \to \GL(V^\vee): \phi \mapsto (\phi^{-1})^*$ (compare \cref{lemma:dualizing-and-inverse-transpose}).
\end{lemma}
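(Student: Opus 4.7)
The plan is to exhibit $(U,T)\mapsto (T^\circ, U^\circ)$ as an explicit poset isomorphism and then check equivariance by a direct calculation using the inverse-transpose formula. First I would verify that the map is well-defined: given a splitting $(U,T)$ of $C$ with $U\subseteq V$, the natural restriction map $C^\vee \twoheadrightarrow U^\vee$ has kernel $U^\circ$ (and similarly for $T^\circ$), and the decomposition $C = U\oplus T$ dualizes to a direct-sum decomposition $C^\vee = T^\circ \oplus U^\circ$, where each summand is identified with $U^\vee$ respectively $T^\vee$. Since $R^{op}$ also satisfies \cref{assumption:R} by \cref{lemma:relation-to-Rop}, the Hermite property ensures these summands are free and nonzero. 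The inclusion $V^\circ \subseteq U^\circ$ is automatic from $U\subseteq V$, so $(T^\circ, U^\circ)$ lies in $\bS^{E_1}(\cdot, V^\circ\subseteq\cdot|C^\vee)$.

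Next I would verify that the map is order-preserving: the ordering on $\bS^{E_1}$ is $(P,Q)\leq (P',Q')$ iff $P\subseteq P'$ and $Q'\subseteq Q$, so if $(U_1,T_1)\leq (U_2,T_2)$ then $U_1\subseteq U_2$ and $T_2\subseteq T_1$, which immediately yield $T_1^\circ \subseteq T_2^\circ$ and $U_2^\circ \subseteq U_1^\circ$, i.e.\ $(T_1^\circ, U_1^\circ)\leq (T_2^\circ, U_2^\circ)$. For the inverse, I would send $(P,Q)$ in $\bS^{E_1}(\cdot, V^\circ\subseteq\cdot|C^\vee)$ to $(Q^\circ, P^\circ)$ and use the canonical identification $C\cong C^{\vee\vee}$ (see the proof of \cref{lemma:comparing-the-tits-complexes-of-R-and-Rop}) to view these as summands of $C$. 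The condition $V^\circ\subseteq Q$ gives $Q^\circ \subseteq V^{\circ\circ}=V$ under the canonical identification, showing this lands in the correct subposet, and then the same $(-)^{\circ\circ}=\id$ argument as in \cref{lemma:comparing-the-tits-complexes-of-R-and-Rop} establishes that the two maps are mutually inverse.

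Finally, the equivariance statement. For $\phi\in\GL(C,\pres V,\fix C/V)$, I would first check that the inverse-transpose $(\phi^{-1})^*$ fixes $V^\circ$ pointwise: if $f\in V^\circ$ and $\vec v\in C$, then $\phi^{-1}(\vec v)-\vec v\in V$ because $\phi$ acts as the identity on $C/V$, so $((\phi^{-1})^*f)(\vec v)=f(\phi^{-1}(\vec v))=f(\vec v)$. Hence the inverse-transpose isomorphism restricts to a homomorphism $\GL(C,\pres V,\fix C/V)\to \GL(C^\vee, \fix V^\circ)$. Equivariance of the poset isomorphism then reduces to the identity $(W\cdot\phi)^\circ = W^\circ\cdot(\phi^{-1})^*$ for any summand $W\subseteq C$, which is exactly the compatibility already checked in the last paragraph of the proof of \cref{lemma:comparing-the-tits-complexes-of-R-and-Rop}.

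The main conceptual point, rather than an obstacle, is simply to keep track of the order-reversal built into $(-)^\circ$ and of the swap of the two coordinates of a splitting: the forward map sends the ``small'' summand $U\subseteq V$ to the ``large'' summand $U^\circ\supseteq V^\circ$, which is precisely what makes both orderings and both side-conditions (``$\subseteq V$'' versus ``$\supseteq V^\circ$'') match up. No new ring-theoretic input is needed beyond what is already provided by \cref{lemma:relation-to-Rop}, \cref{lemma:dualizing-and-inverse-transpose}, and \cref{lemma:comparing-the-tits-complexes-of-R-and-Rop}.
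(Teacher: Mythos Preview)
Your proposal is essentially the paper's own proof: dualize the splitting via $(-)^\circ$, use the canonical double-dual identification $C\cong (C^\vee)^\vee$ to obtain the inverse, and check equivariance via the identity $(W\cdot\phi)^\circ = W^\circ\cdot(\phi^{-1})^*$. The well-definedness, order-preservation, and equivariance checks you outline match the paper's argument closely.

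There is one omission. The lemma asserts that the inverse-transpose isomorphism restricts to an \emph{identification} of the subgroups $\GL(C,\pres V,\fix C/V)$ and $\GL(C^\vee,\fix V^\circ)$, and the paper proves this explicitly: after showing (as you do) that the restriction lands in $\GL(C^\vee,\fix V^\circ)$, it also verifies surjectivity, i.e.\ that every $(\phi^{-1})^*\in\GL(C^\vee,\fix V^\circ)$ arises from some $\phi\in\GL(C,\pres V,\fix C/V)$. This uses that $f=f\circ\phi$ for all $f\in V^\circ$ forces $\on{ev}_{\phi(\vec v)}\in (V^\circ)^\circ$ for $\vec v\in V$ (hence $\phi$ preserves $V$) and $\on{ev}_{\vec c-\phi(\vec c)}\in(V^\circ)^\circ$ for all $\vec c\in C$ (hence $\phi$ fixes $C/V$). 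You only establish that the restriction is a homomorphism into $\GL(C^\vee,\fix V^\circ)$, not onto it. This matters for the downstream application in \cref{SSTvanish}, where one passes from coinvariants under one subgroup to coinvariants under the other, which requires the subgroups to correspond isomorphically. The missing step is short and in the same spirit as what you already wrote, but it should be included.
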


\begin{proof}
	We first show that the mapping $(U,T)\mapsto (T^{\circ}, U^{\circ})$ is	well-defined: That $T^{\circ} \cap U^{\circ} = \{0\}$ follows immediately from	the fact that $C = U\oplus T$, since any map vanishing on both $U$ and $T$
	must then vanish on all of $C$. Furthermore, restricting $f\colon C = U \oplus T \to R$ to $f|_T\colon T \to R$ leads to an exact sequence of $R^{op}$-modules $0 \to T^\circ \to C^\vee \to T^\vee \to 0$. This sequence is split, since $T^\vee$ is a free $R^{op}$-module and of the same rank as $T$ (see \cref{lemma:dualizing-and-inverse-transpose}). Because $C^\vee$ is also free of the same rank as $C$, it follows that $T^{\circ}$ is a free $R^{op}$-module of the same rank as $U$. Similarly, $U^{\circ}$ is a free $R^{op}$-module of the same rank as $T$. Hence, $(T^{\circ}, U^{\circ})$ is a splitting of 	$C^{\vee}$ into nonzero free summands. Furthermore, $U \subseteq V$ implies $V^{\circ} \subseteq U^{\circ}$ and, similarly, $(U_1, T_1)\leq (U_2,T_2)$ implies $(T_1^{\circ}, U_1^{\circ})\leq (T_2^{\circ}, U_2^{\circ})$. Thus, 
	$$\on{\mathcal{D}}\colon \bS^{E_1}(\cdot \subseteq V, \cdot | C) \to \bS^{E_1}(\cdot, V^{\circ}\subseteq \cdot |C^{\vee})$$
	is indeed a well-defined map of posets.
	
	We check that $\on{\mathcal{D}}$ is a poset isomorphism: Firstly, we show that $\on{\mathcal{D}}$ is bijective. We note that the canonical isomorphism $C \cong (C^\vee)^\vee$ (see e.g.\ \cite[Theorem 9.2]{blyth1990}) identifies $U$ and $(U^\circ)^\circ$ and induces an isomorphism of posets $\bS^{E_1}(V \subseteq \cdot,\cdot | C) \cong \bS^{E_1}((V^\circ)^\circ \subseteq \cdot,\cdot |(C^{\vee})^{\vee})$, which factors through $\on{\mathcal{D}}$. This implies that $\on{\mathcal{D}}$ is injective. Similarly, we see that $\on{\mathcal{D}}$ is surjective using $C^\vee \cong ((C^\vee)^\vee)^\vee$. Secondly, we show that $\on{\mathcal{D}}$ is an order embedding, i.e.\ that $(U_1, T_1)\leq (U_2,T_2)$ if and only if $(T_1^{\circ}, U_1^{\circ})\leq (T_2^{\circ}, U_2^{\circ})$. It remains to see that $(T_1^{\circ}, U_1^{\circ})\leq (T_2^{\circ}, U_2^{\circ})$ implies $(U_1, T_1)\leq (U_2,T_2)$. This follows from the argument in \cref{lemma:comparing-the-tits-complexes-of-R-and-Rop}, which shows that for any free summands $V_1,V_2$, if $V_2^{\circ}\subseteq V_1^{\circ}$, then $V_1\subseteq V_2$. Thus, $\on{\mathcal{D}}$ is a poset isomorphism.
	
	Finally, we check that the isomorphism $\on{\mathcal{D}}$ is compatible with suitable group actions: First note that if $\phi \in \GL(C,\pres{V},\fix{C/V})$, then $f \cdot (\phi^{-1})^* = f \circ \phi^{-1} = f$ for all $f\in V^{\circ}$. We can see this, for example, by choosing a complement $W$ of $V$. Then we can write any $c\in C$ as $c=v+w$ for $v\in V$ and $w\in W$, and $f(c)=f(w)$. Since $\phi$ fixes $C/V$ and preserves $V$, we have that $\phi^{-1}(v+w) =v' + w$, for some $v'\in V$. Thus, $f(\phi^{-1}(v+w))= f(v'+w)=f(w)=f(c)$. Therefore, the isomorphism $\GL(C)\to \GL(C^{\vee})$ restricts to a map from the subgroup $\GL(C,\pres{V},\fix{C/V})$ to the subgroup $\GL(C^{\vee}, \fix{V^{\circ}})$. We claim that this is an isomorphism, i.e.\ that this restriction is surjective: Every $(\phi^{-1})^* \in \GL(C^{\vee}, \fix{V^{\circ}})$ satisfies $f = f \circ \phi$ for all $f \in V^\circ$. For $\vec v \in V$, it therefore holds that $\on{ev}_{\phi(\vec v)} \in (V^\circ)^\circ$ because $\on{ev}_{\phi(\vec v)}(f) = f(\phi(\vec v)) = 0$ for all $f \in V^\circ$. The canonical isomorphism $C \cong (C^\vee)^\vee$ identifies $V$ with $(V^\circ)^\circ$, and hence implies that $\phi(v) \in V$, i.e.\ that $\phi$ preserves $V$. We next observe that for every $\vec c \in C$ it holds that $\on{ev}_{\vec c - \phi(\vec c)}(f) = 0$ for all $f \in V^\circ$ and therefore $\on{ev}_{\vec c - \phi(\vec c)} \in (V^\circ)^\circ$. The canonical isomorphism $V \cong (V^\vee)^\vee$ hence implies that $\vec c - \phi(\vec c) \in V$, i.e.\ that $\phi$ fixes $C/V$.
	
	Suppose $\phi \in \GL(C)$ takes $(U_1,T_1)$ to $(U_2, T_2)$. Then $\phi^{-1}$ maps	$(U_2, T_2)$ to $(U_1, T_1)$. In particular, the preimage of $U_1$ under	$\phi^{-1}$ is $U_2$, and $\phi^{-1}(U_2)=U_1$, so $f \in U_1^{\circ}$ if and only if $f \cdot (\phi^{-1})^* = f\circ \phi^{-1} \in U_2^{\circ}$. Similarly, $f\in T_1^{\circ}$ if and only if $f \cdot (\phi^{-1})^* = f\circ \phi^{-1}\in	T_2^{\circ}$. Thus, $(\phi^{-1})^*$ takes $(T_1^{\circ},U_1^{\circ})$ to $(T_2^{\circ}, U_2^{\circ})$. This establishes the compatibility of our isomorphism with the group action.
\end{proof}

The next proposition is the analogue of the first part of \cite[Theorem 4.8]{kupersmillerpatzt2022} and shows that Charney's connectivity result \cite[Theorem 1.1]{charney1980} extends to our setting. It relies on a map-of-posets argument which is analogous to that used to prove \cite[Theorem 4.3]{galatiuskupersrandalwilliams2018cellsandfinite} and \cite[Theorem 4.8]{kupersmillerpatzt2022}.

\begin{proposition}\label{freeCharney}
	Assume that $R$ satisfies \cref{assumption:R}. Then $\bS_n^{E_1}(R)$ is $(n-2)$-spherical for all $n$, and if $W \in \bT_n(R)$ is a proper nonzero free summand of $R^n$, then $\bS^{E_1}(\cdot,W\subseteq \cdot|R^n)$ is $(n-\rank{W}-1)$-spherical.
\end{proposition}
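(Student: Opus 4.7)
The argument proceeds by induction on $n$, establishing both claims together. The base case $n=1$ is trivial: $\bS_1^{E_1}(R)$ is empty, hence $(-1)$-spherical, and no proper nonzero $W$ exists. For the inductive step, we first establish the relative claim via a secondary induction on $n - w$ (where $w = \rank W$), and then deduce the absolute claim.

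The base of the secondary induction, $n - w = 1$, is immediate: $\bS^{E_1}(\cdot, W \subseteq \cdot | R^n)$ is the discrete set of pairs $(P, W)$ with $P$ a rank-one free complement of $W$, which is nonempty by \cref{lemma:complements-are-free} and hence $0$-spherical. For the secondary inductive step with $w \leq n - 2$, we apply the topological Quillen Theorem A \cite[Proposition 4.1]{kupersmillerpatzt2022} to the poset map
\[
\pi \colon \bS^{E_1}(\cdot, W \subseteq \cdot | R^n) \to \bT(R^n/W)^{op}, \quad (P, Q) \mapsto Q/W,
\]
well-defined on the sub-poset where $W \subsetneq Q$ by \cref{lemma:summand-property} and \cref{lemma:complements-are-free}, with the discrete stratum of maximal elements of the form $(P, W)$ handled separately by a direct gluing argument. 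The target is identified with $\bT_{n-w}(R)^{op}$ via \cref{upperlowerTits} and is Cohen--Macaulay of dimension $n-w-2$ by \cref{lemma:span-map}. The lower fiber of $\pi$ at a summand $V \subseteq R^n/W$ is isomorphic to $\bS^{E_1}(\cdot, W_V \subseteq \cdot | R^n)$, where $W_V$ is the preimage of $V$ under the projection $R^n \to R^n/W$; since $\rank W_V > w$, this fiber is covered by the secondary induction hypothesis. The upper fiber of $\pi$ at $V$ is identified via \cref{cuttingDown} combined with \cref{lemma:dualizing-argument-R-vs-Rop} with a splitting complex of a free module of rank strictly less than $n$, and is therefore covered by the outer induction hypothesis on $n$. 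Here \cref{theorem:relation-to-Rop} ensures that the $R^{op}$-module produced by the dualizing isomorphism is governed by the same assumptions as $R$.

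Once the relative claim is established at rank $n$, the absolute claim follows from an analogous application of Quillen Theorem A to the forgetful map $\bS_n^{E_1}(R) \to \bT_n(R)^{op}$ sending $(P, Q) \mapsto Q$, whose fibers are of the form $\bS^{E_1}(\cdot, Q_0 \subseteq \cdot | R^n)$ and are handled by the just-established relative claim at rank $n$. The dimension shift from $(n-w-1)$ to $(n-2)$ arises from the fact that the target $\bT_n(R)^{op}$ has dimension $n-2$ rather than $n-w-2$, and the maximal stratum $\{(P, Q) : Q \text{ is a line}\}$ is no longer discrete but provides the final suspension.

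The main obstacle is the precise identification of the upper fibers of $\pi$ and of the discrete top stratum with splitting complexes of strictly smaller rank, which requires the interleaved use of \cref{cuttingDown}, \cref{lemma:dualizing-argument-R-vs-Rop}, and the left-right duality \cref{theorem:relation-to-Rop}. This is the technical core of the proof and mirrors the strategy of \cite[Theorem 4.8]{kupersmillerpatzt2022}, now extended beyond the principal ideal domain setting to the full generality of rings satisfying \cref{assumption:R}.
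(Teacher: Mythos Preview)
Your outline for the absolute statement (the map $\bS^{E_1}_n(R)\to\bT_n(R)^{op}$, $(P,Q)\mapsto Q$, with lower fibers $\bS^{E_1}(\cdot,Q_0\subseteq\cdot\,|R^n)$) matches the paper and is fine. The relative argument, however, has a genuine structural problem.

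For the relative claim you map $(P,Q)\mapsto Q/W\in\bT(R^n/W)^{op}$. This map is undefined precisely on the maximal elements $(P,W)$, and your ``direct gluing argument'' for that stratum is not an argument: you would have to show that the link $\bS^{E_1}_{<(P,W)}$ is $(n-w-2)$-spherical and then control the attachment, which is essentially the full statement you are trying to prove at rank $n-w$. More seriously, your appeal to \cite[Proposition 4.1]{kupersmillerpatzt2022} is garbled: that result takes as input the \emph{lower fibers} $\pi_{\le V}$ together with the \emph{upper intervals in the target} $\bT(R^n/W)^{op}_{>V}\cong\bT(V)$, not ``upper fibers of $\pi$''. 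The upper intervals in your target are Tits complexes, spherical by \cref{lemma:span-map}, and require neither cutting down nor dualizing; so the place where you invoke \cref{cuttingDown} and \cref{lemma:dualizing-argument-R-vs-Rop} does not exist in your scheme, and the outer induction on $n$ is never actually engaged.

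The paper avoids all of this by mapping via the \emph{first} coordinate,
\[
f\colon \bS^{E_1}(\cdot,W\subseteq\cdot\,|R^n)\longrightarrow \bT(R^n,W),\qquad (V,U)\longmapsto V,
\]
into the \emph{relative} Tits complex (not $\bT(R^n/W)$). This map is defined everywhere, so no stratum needs special treatment. Its lower fiber at $V$ is $\bS^{E_1}(\cdot\subseteq V,\,W\subseteq\cdot\,|R^n)$; if $V\oplus W=R^n$ this has a terminal object and is contractible, and otherwise \cref{cuttingDown} followed by \cref{lemma:dualizing-argument-R-vs-Rop} identifies it with $\bS^{E_1}(\cdot,\,V^{\circ}\subseteq\cdot\,|C^{\vee})$ for a free $R^{op}$-module $C^{\vee}$ of rank $n-\rank W<n$, so the single outer induction on $n$ (for $R^{op}$, using \cref{theorem:relation-to-Rop}) applies. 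The upper intervals $\bT(R^n,W)_{>V}\cong\bT^{\rank W}_{\,n-\rank W-\rank V}(R)$ are spherical by \cref{lemma:span-map}. No secondary induction on $n-w$ is needed.
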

	
\begin{proof}	
	This argument exploits that $R^{op}$ also satisfies \cref{assumption:R} by the virtue of \cref{theorem:relation-to-Rop}. All steps in this proof are carried out for $R$ and $R^{op}$ simultaneously, even though we only write out the details for $R$.
	
	We first establish the second part. If $\rank{W}=n-1$, then \cref{ibn-III} (established in \cref{corollary:assumption-consequence-1}) implies that there is no summand $W'$ of $R^n$ satisfying $W \subsetneq W' \subsetneq R^n$, and hence $\bS^{E_1}(\cdot,W\subseteq \cdot) \cong \bT(R^n,W) \cong \bT_{n-\rank{W}}^{\rank{W}}(R)$, which is $(n-\rank{W}-1)$-spherical by \cref{lemma:span-map}. Thus, it suffices to prove the result for $1\leq \rank{W}\leq n-2$. We do this by induction on $n$ with base case $n=2$.
	
	If $n=2$, then $\rank{W}=n-1$ is the only possibility, so there is nothing left to prove. For the induction step, we consider the map of posets
	$$f\colon\bS^{E_1}(\cdot, W \subseteq \cdot|R^n)\to \bT(R^n,W)$$
	sending a splitting $(V,U)$ to $V$, exactly as in the proof of \cite[Theorem 4.8]{galatiuskupersrandalwilliams2018cellsandfinite} and \cite[Theorem 4.8]{kupersmillerpatzt2022}. Note that this map is well-defined in our setting: If $V \oplus U = R^n$, $L \oplus W = R^n$ and $W \subseteq U$, then \cref{lemma:summand-property} ensures that $U = L' \oplus W$ for some free $L'$. It follows that $W$ admits a complement $C$ in $R^n$ that contains $V$ as a summand, namely $C = L' \oplus V$. Therefore it does indeed hold that $V \in \bT(R^n, W)$, and that the map is well-defined. The following claims about this poset map hold:
	\begin{enumerate}
		\item \label[claim]{claim-1} The target is $(n-\rank{W}-1)$-spherical, since $\bT(R^n,W) \cong	\bT_{n-\rank{W}}^{\rank{W}}(R)$ and invoking \cref{lemma:span-map};
		\item \label[claim]{claim-2} $\bT(R^n,W)_{>V}$ is $(n-\rank{W}-\rank{V}-1)$-spherical if $V\in\bT(R^n,W)$, since $\bT(R^n,W)_{>V} \cong \bT_{n-\rank{W}-\rank{V}}^{\rank{W}}(R)$ by \cref{upperlowerRelTits} and invoking \cref{lemma:span-map};
		\item \label[claim]{claim-3} $\bS^{E_1}(\cdot, W \subseteq \cdot|R^n)_{f\leq V}$ is $(\rank{V}-1)$-spherical for every $V \in \bT(R^n, W)$, as we shall see now.
	\end{enumerate}
	To show \cref{claim-3}, we let $V \in \bT(R^n,W)$ and consider two cases. If $V\oplus W=R^n$, then $(V,W)$ is a terminal object in $\bS^{E_1}(\cdot, W \subseteq \cdot|R^n)_{f\leq V}$ and the complex is contractible. If $V\oplus W \neq R^n$, we apply cutting down: By the definition of $\bT(R^n, W)$, it holds that $V$ is a summand of a complement $C$ of $W$. In particular, $V \oplus W \subsetneq R^n$ is a proper summand of $R^n$ in this second case. Using that $V \subseteq C$, \cref{cuttingDown} therefore yields an isomorphism
	$$\bS^{E_1}(\cdot\subseteq V,W\subseteq |R^n) \cong \bS^{E_1}(\cdot\subseteq V,\cdot|C).$$
	Consider the free $R^{op}$-module $C^{\vee}=\Hom_R(C,R)$, which is of the same rank as $C$ (see \cref{lemma:dualizing-and-inverse-transpose}) and its submodule $V^\circ$ of $R$-linear functions that vanish on $V$. \cref{lemma:dualizing-argument-R-vs-Rop} yields a poset isomorphism $\bS^{E_1}(\cdot\subseteq V,\cdot|C) \cong \bS^{E_1}(\cdot,V^{\circ}\subseteq \cdot |C^{\vee})$.
	Since $C^{\vee}$ is a free $R^{op}$-module of rank $n-\rank{W}$ (which is strictly less than $n$), and $\rank{V^{\circ}}=\rank{C}-\rank{V}$, it follows from the induction hypothesis for $R^{op}$ that $\bS^{E_1}(\cdot,V^{\circ}\subseteq \cdot |C^{\vee})$ is
	$(\rank{C} -(\rank{C} -\rank{V})-1)=(\rank{V}-1)$-spherical. This finishes the proof of \cref{claim-3}.
	
	\cref{claim-1}, \cref{claim-2} and \cref{claim-3} show that we can apply \cite[Theorem 4.1]{galatiuskupersrandalwilliams2018cellsandfinite} with $t(V) = n - \rank{W} - \rank{V}$ to conclude that the source $\bS^{E_1}(\cdot,W\subseteq \cdot|R^n)$ is indeed $(n-\rank{W}-1)$-spherical.
	
	Now we prove the statement about connectivity of $\bS^{E_1}_n(R)$ and consider the map of posets
	$$f\colon \bS^{E_1}_n(R)\to \bT_n(R)^{op}$$
	sending a splitting $(V,W)$ to $W$,  exactly as in the proof of \cite[Theorem 4.8]{galatiuskupersrandalwilliams2018cellsandfinite} and \cite[Theorem 4.8]{kupersmillerpatzt2022}. Note that this map is also well-defined in our setting, since $W$ is indeed a nontrivial proper summand of $R^n$. The following claims hold:
	\begin{enumerate}
		\item \label[claim]{claim-1-part-2} The target $\bT_n(R)^{op}$ is $(n-2)$-spherical by \cref{lemma:span-map};
		\item \label[claim]{claim-2-part-2} $\bT_n(R)^{op}_{>W}$ is $(\rank{W}-2)$-spherical if $W\in \bT_n(R)$, since $\bT_n(R)^{op}_{>W}=\bT_n(R)_{<W}\cong \bT_{\rank{W}}(R)$ by \cref{upperlowerTits} and invoking \cref{lemma:span-map};
		\item \label[claim]{claim-3-part-2} $\bS^{E_1}_n(R)_{f\leq W}\cong \bS^{E_1}(\cdot,W\subseteq\cdot|R^n)$ is $(n-\rank{W}-1)$-spherical as we have argued above.
	\end{enumerate}
	\cref{claim-1-part-2}, \cref{claim-2-part-2} and \cref{claim-3-part-2} show that we can apply \cite[Theorem 4.1]{galatiuskupersrandalwilliams2018cellsandfinite} with $t(W) = \rank{W} - 1$ to conclude that the source $\bS^{E_1}_n(R)$ is indeed $(n-2)$-spherical.
\end{proof}

In light of \cref{freeCharney}, it makes sense to define relative versions of the Charney module $\on{Ch}_n(R)$, see \cref{def:charney-module}, if $R$ satisfies \cref{assumption:R}.

\begin{definition}
	Let $R$ be a ring such that $S^{E_1}(\cdot,W \subseteq \cdot|R^n)$ is $(n-\rank{W}-1)$-spherical for $W \in \bT_n(R)$. We define the \emph{relative Charney module} to be the right $\GL(R^n,\fix{W})$-module
	$$\on{Ch}(R^n,W) \coloneqq \widetilde H_{n-\rank{W}-1}(S^{E_1}(\cdot,W\subseteq\cdot|R^n); \Z).$$
\end{definition}

The last proposition in this appendix is the analogue of the second part of \cite[Theorem 4.8]{kupersmillerpatzt2022} and shows that their vanishing theorem for the coinvariants of Charney modules also holds to our setting. This builds on the map-of-posets arguments carried out in the proof of the previous proposition.

\begin{proposition}
	\label{SSTvanish}
	Assume that $R$ satisfies \cref{assumption:R}, and let $\K$ be a commutative ring such that $2 \in \K^\times$ is a unit and $W \in \bT_n(R)$. If $n \geq 2$, then
	$$(\on{Ch}_n(R)\otimes \K)_{\GL_n(R)} = 0, \text{ and }	(\on{Ch}(R^n,W)\otimes \K)_{\GL(R^n, \fix{W})}=0.$$
\end{proposition}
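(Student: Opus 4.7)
My plan is to adapt the proof of the second part of \cite[Theorem 4.8]{kupersmillerpatzt2022} to our setting. The essential ingredients are the vanishing of coinvariants of (relative) Steinberg modules (\cref{corollary:coinvariants-of-steinberg} and \cref{corollary:coinvariants-of-relative-steinberg}), the cutting down and dualizing isomorphisms (\cref{cuttingDown} and \cref{lemma:dualizing-argument-R-vs-Rop}), and, crucially, the fact that $R^{op}$ also satisfies \cref{assumption:R} by \cref{theorem:relation-to-Rop}. I would prove the two parts together by induction on $n$, treating the relative case before the absolute one in each dimension.

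For Part 2 I would first settle the case $\rank(W) = n - 1$ by hand. Here $\bS^{E_1}(\cdot, W \subseteq \cdot | R^n)$ is the discrete set of rank-$1$ complements of $W$ in $R^n$, and $\on{Ch}(R^n, W)$ is the augmentation ideal of the associated permutation module. Given two distinct complements $V_0 = \langle v_0 \rangle$ and $V_1 = \langle v_0 + w \rangle$ with $w \in W$, the endomorphism $\phi$ of $R^n$ defined by $\phi(v_0) = -v_0 - w$ and $\phi|_W = \id_W$ has determinant $-1$, hence lies in $\GL(R^n, \fix{W})$, and swaps $V_0$ and $V_1$. It therefore acts by $-1$ on $[V_0] - [V_1]$, and since $2$ is a unit in $\K$ these differences vanish in the coinvariants. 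For the inductive step, I would exploit the map of posets $f \colon \bS^{E_1}(\cdot, W \subseteq \cdot | R^n) \to \bT(R^n, W)$ sending $(U, T)$ to $U$, already used in the proof of \cref{freeCharney}. Its target is $(n - \rank(W) - 1)$-spherical with top homology the relative Steinberg module, whose $\K$-coefficient coinvariants vanish by \cref{corollary:coinvariants-of-relative-steinberg}. Each fiber $f^{-1}(V) \cong \bS^{E_1}(\cdot \subseteq V, W \subseteq \cdot | R^n)$ is, via \cref{cuttingDown} followed by \cref{lemma:dualizing-argument-R-vs-Rop}, isomorphic to $\bS^{E_1}(\cdot, V^{\circ} \subseteq \cdot | C^{\vee})$, a relative splitting complex for the $R^{op}$-module $C^{\vee}$ of rank $n - \rank(W) < n$. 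By the outer induction applied to $R^{op}$, its top-homology $\K$-coefficient coinvariants vanish.

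For Part 1 the same machinery is applied to the map $f \colon \bS^{E_1}_n(R) \to \bT_n(R)^{op}$, $(V, W) \mapsto W$, from the proof of \cref{freeCharney}. Its target has top homology identifiable with $\St_n(R)$ by \cref{lemma:comparing-the-tits-complexes-of-R-and-Rop}, whose $\K$-coefficient coinvariants vanish by \cref{corollary:coinvariants-of-steinberg}. Its fibers are exactly the relative splitting complexes treated in Part 2, for which the top-homology $\K$-coefficient coinvariants have just been shown to vanish. A Leray-type spectral sequence for the map-of-posets $f$ then gives $(\on{Ch}_n(R) \otimes \K)_{\GL_n(R)} = 0$.

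The main obstacle will be making the spectral-sequence arguments precise: in particular, correctly tracking the equivariance of the fiber coefficient systems---whose cohomological degree varies with $\rank(W)$---and verifying that only the top-total-degree contributions survive to affect $\on{Ch}_n(R)$. A more hands-on alternative would be to extend the Church--Farb--Putman style generator-plus-swap approach of \cref{theorem:generators} and \cref{corollary:coinvariants-of-relative-steinberg} directly to the Charney modules, using $f$ to construct an explicit generating set indexed by apartment-like data to which the swap argument applies verbatim.
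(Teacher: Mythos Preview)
Your proposal is correct and follows essentially the same approach as the paper. Two small corrections: the phrase ``determinant $-1$'' is meaningless for noncommutative $R$ (just observe that $\phi^2=\id$, so $\phi\in\GL(R^n,\fix W)$), and what you call the ``fiber $f^{-1}(V)$'' is really the lower fiber $f_{\leq V}=\bS^{E_1}(\cdot\subseteq V,W\subseteq\cdot|R^n)$. The paper resolves the equivariance obstacle you anticipate not via a spectral sequence but via the explicit filtration provided by the second part of \cite[Theorem~4.1]{galatiuskupersrandalwilliams2018cellsandfinite}: its graded pieces are induced modules of the form $\Ind^{\GL(R^n,\fix W)}_{\GL(R^n,\fix W,\pres V)}\bigl(\St(R^n/V,W)\otimes\widetilde H_q(\bS^{E_1}(\cdot\subseteq V,W\subseteq\cdot|R^n))\otimes\K\bigr)$, and after Shapiro's lemma one restricts further to the kernel $K=\ker\bigl(\GL(R^n,\fix W,\pres V)\to\GL(R^n/V,\fix W)\bigr)$, which acts trivially on the $\St$ factor, reducing exactly to the $\GL(C^\vee,\fix V^\circ)$-coinvariants of $\on{Ch}(C^\vee,V^\circ)$ covered by the inductive hypothesis for $R^{op}$.
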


\begin{proof}
	This argument runs parallel to the proof of \cref{freeCharney} and uses facts established there.
	Again, we use that $R^{op}$ also satisfies \cref{assumption:R} by virtue of \cref{theorem:relation-to-Rop}. And again, all steps in this proof are carried out for $R$ and $R^{op}$ simultaneously, even though we only write out the details for $R$. 
	
	We start by establishing the second part of the claim. If the rank of $W$ is $n-1$, then $\on{Ch}(R^n,W)$ is isomorphic to $\on{St}(R^n,W)$ by the proof of \cref{freeCharney} (the isomorphism $\bS^{E_1}(\cdot,W\subseteq \cdot) \cong \bT(R^n,W)$ is $\GL(R^n,\fix{W})$-equivariant). In this case, the result has therefore already been shown
	in \autoref{corollary:coinvariants-of-relative-steinberg}. Thus, it suffices to prove the result for $1\leq \rank{W}\leq n-2$. We do this by induction on $n$ with base case $n=2$.
	
	If $n=2$, then $\rank{W}=n-1$ is the only possibility, so there is nothing left to prove. For the induction step, the first map-of-posets argument in the proof of \cref{freeCharney} yields, by the second part of \cite[Theorem 4.1]{galatiuskupersrandalwilliams2018cellsandfinite}, a filtration of $\K[\GL(R^n,W)]$-modules $0=F_{n- \rank{W}}\subseteq \dots\subseteq F_{-1}=\on{Ch}(R^n,W)\otimes \K$ such that
	$$F_{-1}/F_0\cong \St(R^n,W)\otimes \K,$$
	and if $q\geq 0$,
	\begin{align*}
		F_q/F_{q+1}&\cong\bigoplus_{\substack{V\in \bT(R^n,W)\\ \rank{V}=q+1}} \St(R^n/V,W)\otimes \widetilde{H}_{q}(\bS^{E_1}(\cdot\subseteq V,W\subseteq \cdot|R^n))\otimes \K\\
		&\cong \Ind_{\GL(R^n,\fix{W},\pres{V})}^{\GL(R^n,\fix{W})}\St(R^n/V,W)\otimes \widetilde{H}_{q}(\bS^{E_1}(\cdot\subseteq V,W\subseteq \cdot|R^n))\otimes \K,
	\end{align*}
	where $V$ is a choice of rank $q+1$ summand in $\bT(R^n,W)$, and $\GL(R^n,\fix{W},\pres{V})$ denotes the subgroup of $\GL_n(R)$ that fixes $W$ pointwise and maps $V$ to $V$. It then suffices to show that the $\GL(R^n,\fix{W})$-coinvariants of $F_q/F_{q+1}$ vanish for all $q$.
	
	When $q=-1$, we have that
	$$(\St(R^n,W)\otimes \K)_{\GL(R^n,\fix{W})}=0$$
	by \cref{corollary:coinvariants-of-relative-steinberg}.	If $q\geq 0$, then Shapiro's Lemma says that the $\GL(R^n,\fix{W})$-coinvariants	of $F_q/F_{q+1}$ are isomorphic to the $\GL(R^n,\fix{W},\pres{V})$-coinvariants	of
	$$\St(R^n/V,W)\otimes\widetilde{H}_{q}(\bS^{E_1}(\cdot\subseteq V,W\subseteq\cdot|R^n))\otimes \K$$
	for some choice of $V$. To show that these coinvariants vanish, we restrict to the subgroup $K$ defined by the short exact sequence
	$$0\to K\to \GL(R^n,\fix{W},\pres{V})\to \GL(R^n/V,\fix{W})\to 0.$$
	In particular, every element of $K$ acts trivially on $R^n/V$, so $K$ acts trivially on $\St(R^n/V,W)$. Thus, it suffices to show that the $K$-coinvariants of
	$$\widetilde{H}_{q}(\bS^{E_1}(\cdot\subseteq V,W\subseteq \cdot|R^n))\otimes \K$$
	vanish. If $V\oplus W=R^n$, then $\bS^{E_1}(\cdot\subseteq V,W\subseteq \cdot|R^n)$ is contractible (as in the proof of \cref{freeCharney}), so there is nothing to prove. On the other hand, if $V\oplus W$ is a proper summand of $R^n$, we can choose a complement $C$ of $W$ that contains $V$ and apply cutting down, exactly as in the proof of \cref{freeCharney}. That is,
	$$\widetilde{H}_{q}(\bS^{E_1}(\cdot\subseteq V,W\subseteq \cdot|R^n))\otimes \K \cong \widetilde{H}_{q}(\bS^{E_1}(\cdot\subseteq V,\cdot|C))\otimes \K.$$
	Notice that every element of $K$ must preserve $C$ (since any $\phi\in K$ determines the identity map on $R^n/V$, it must be the case that $\phi(c)-c\in	V$ for any $c\in C$; hence $\phi(c)\in C$) in addition to preserving $V$
	and fixing $W$, so this isomorphism is $K$-equivariant. Moreover, restriction to $C$ gives an isomorphism from $K$ to $\GL(C,\pres{V},\fix{C/V})$, so it suffices	to show vanishing of the $\GL(C,\pres{V},\fix{C/V})$-coinvariants of
	$\widetilde{H}_{q}(\bS^{E_1}(\cdot\subseteq V,\cdot	|C))\otimes \K$.
	As in the proof of \cref{freeCharney}, \cref{lemma:dualizing-argument-R-vs-Rop} yields a poset isomorphism $\bS^{E_1}(\cdot\subseteq V,\cdot|C) \cong \bS^{E_1}(\cdot,V^{\circ}\subseteq \cdot |C^{\vee})$ that is compatible with the group action of $\GL(C,\pres{V},\fix{C/V})$ on the left side and the group action of $\GL(C^\vee, \fix V^\circ)$ on the right side. It therefore suffices to show vanishing of the
	$\GL(C^{\vee},\fix{V^{\circ}})$-coinvariants of $\widetilde{H}_{q}(\bS^{E_1}(\cdot,	V^{\circ} \subseteq \cdot |C^{\vee}))\otimes \K$. But this is precisely the $\GL(C^{\vee},\fix{V^{\circ}})$-coinvariants of $\on{Ch}(C^{\vee},V^{\circ})\otimes \K$, and the rank of the free $R^{op}$-module $C^{\vee}$ is strictly less than $n$ (since $W$ is nontrivial) and at least 2 (since we have already dealt with the case where $W$ has rank $n-1$), so these coinvariants vanish	by the induction hypothesis for $R^{op}$. This finishes the proof of the second part.
	
	Now we prove the first part, the statement about $(\on{Ch}_n(R)\otimes \K)_{\GL_n(R)}$. The second map-of-posets argument from \autoref{freeCharney} and the second part of \cite[Theorem 4.1]{galatiuskupersrandalwilliams2018cellsandfinite} yield a filtration $0=F_{n-1} \subseteq\dots\subseteq F_{-1}=\St^{E_1}_n(R)\otimes \K$ of $\K[\GL_n(R)]$-modules such that
	$$F_{-1}/F_0\cong \St_n(R)\otimes \K,$$
	and if $q\geq 0$,
	\begin{align*}
		F_q/F_{q+1}&\cong \bigoplus_{\substack{U\in \bT_n(R)\\ \rank{U}=n-q-1}}
		\St(U)\otimes \on{Ch}(R^n,U)\otimes \K\\
		&\cong \Ind_{\GL(R^n,\pres{R^{n-q-1})}}^{\GL_n(R)} \St(R^{n-q-1})\otimes
		\on{Ch}(R^n,R^{n-q-1})\otimes \K,
	\end{align*}
	where $R^{n-q-1}\subseteq R^n$ denotes the submodule spanned by the first $n-q-1$ standard basis vectors. As before, it suffices to show that the coinvariants of each
	quotient $F_q/F_{q+1}$ vanish.
	
	When $q=-1$, the fact that the $\GL_n(R)$-coinvariants of $\St_n(R)\otimes \K$ vanish follows from \cref{corollary:coinvariants-of-steinberg}.	If $q\geq 0$, then Shapiro's lemma says that the $\GL_n(R)$-coinvariants of
	$F_q/F_{q+1}$ are isomorphic to the $\GL(R^n,\pres{R^{n-q-1}})$-coinvariants of
	$$\St(R^{n-q-1})\otimes\on{Ch}(R^n,R^{n-q-1})\otimes \K.$$
	It is therefore sufficient to show that the
	$\GL(R^n,\fix{R^{n-q-1}})$-coinvariants vanish, which is true because
	$\GL(R^n,\fix{R^{n-q-1}})$ acts trivially on $\St(R^{n-q-1})$, and we have
	already shown that $(\on{Ch}(R^n,R^{n-q-1})\otimes \K)_{\GL(R^n,\fix{R^{n-q-1}})}=0$ in the first part of this proof.
\end{proof}
	
	\sloppy
	\printbibliography
\end{document}